\DeclareMathOperator{\charac}{char}
\numberwithin{equation}{section}
\newtheorem{thm}{Theorem}[section]
\newtheorem{prop}[thm]{Proposition}
\newtheorem{lem}[thm]{Lemma}
\newtheorem{cor}[thm]{Corollary}
\theoremstyle{definition}
\newtheorem{defn}[thm]{Definition}
\newtheorem{rk}[thm]{Remark}
\newtheorem{ex}[thm]{Example}
\begin{document}
\title[Co-Poisson structures on polynomial Hopf algebras]{Co-Poisson structures on polynomial Hopf algebras}
\author{Qi Lou}
\author{Quanshui Wu}

\address{School of Mathematical Sciences, Fudan University, Shanghai 200433, China}
\email{qlou@fudan.edu.cn}

\address{School of Mathematical Sciences, Fudan University, Shanghai 200433, China}
\email{qswu@fudan.edu.cn}


\begin{abstract}
The Hopf dual $H^\circ$ of any Poisson Hopf algebra $H$
is proved to be a co-Poisson Hopf algebra provided $H$ is noetherian.
Without noetherian assumption, unlike it is claimed in literature, the statement does not hold.
It is proved that there is no nontrivial Poisson Hopf structure on the universal enveloping algebra of a non-abelian Lie algebra.
So the polynomial Hopf algebra, viewed as the universal enveloping algebra of a finite-dimensional abelian Lie algebra, is considered.
The Poisson Hopf structures on polynomial Hopf algebras are exactly linear Poisson structures.
The co-Poisson structures on polynomial Hopf algebras are characterized. Some correspondences between co-Poisson and Poisson structures are also established.
\end{abstract}

\subjclass[2010]{Primary 17B63, 16W10, 16S30}
\keywords{Poisson algebra, co-Poisson coalgebra, Poisson Hopf algebra,
co-Poisson Hopf algebra}
\maketitle

\section{Introduction}

Poisson  structure naturally appears in classical/quantum mechanics, in
mathematical physics, and
in deformation theory. It is an important algebra structure in Poisson
geometry, algebraic geometry and non-commutative geometry. There are lots of research in the related subjects.

Co-Poisson structure is a dual concept of Poisson structure in
categorial point of view. It arises also in mathematics and mathematical physics naturally as explained in the next two paragraphs.

Let $G$ be a Lie group and $\mathcal{O}(G)$ be its algebra of functions.
A  Lie group $G$ is said to be a Poisson Lie group if  $\mathcal{O}(G)$ is a Poisson Hopf algebra.
It is well-know that the category of connected and simply-connected Lie groups is equivalent to the category of finite-dimensional Lie algebras.
In this case, $\mathcal{O}(G)$ is identified with the Hopf dual $U(\mathfrak{g})^\circ$ of the universal enveloping algebra $U(\mathfrak{g})$, where $\mathfrak{g}$
is the corresponding Lie algebra of $G$.
The Poisson counterpart of this fact holds also, namely, the category of connected and simply-connected
Poisson Lie groups is equivalent to the category of finite-dimensional Lie bialgebras (\cite[Theorem 3.3.1]{KS} or \cite[Theorem 1]{Dr}).
However, the Lie bialgebra structures on any Lie algebra $\mathfrak{g}$  is in one-to-one correspondence with the co-Poisson Hopf structures on $U(\mathfrak{g})$ (\cite[Proposition 6.2.3]{CP}).

On the other hand, to quantize a Lie group or Lie algebra one should equip it with an extra structure, namely, a Poisson Lie group structure or Lie bialgebra structure, respectively.
Therefore co-Poisson structure naturally appears in the theory of quantum groups and in mathematical physics.

If $G$ is a connected and simply-connected Poisson Lie group and $\mathfrak{g}$ is the corresponding Lie bialgebra,
then the Poisson Hopf structure on $U(\mathfrak{g})^\circ \cong \mathcal{O}(G)$ is the dual of the  co-Poisson Hopf structures on $U(\mathfrak{g})$.
In \cite{OP}, the authors proved that the dual Hopf algebra $U(\mathfrak{g})^\circ$ of $U(\mathfrak{g})$ is a Poisson Hopf algebra for any finite-dimensional
Lie bialgebra $\mathfrak{g}$. In fact, in general, as stated in \cite[Proposition 3.1.5]{KS} earlier,  the dual Hopf algebra of any co-Poisson Hopf algebra is a Poisson Hopf algebra.
A complete proof  is given in a recent paper by Oh \cite[Theorem 2.2]{Oh}. The dual proposition that the dual Hopf algebra of any Poisson Hopf algebra is a co-Poisson Hopf algebra is also stated in \cite[Proposition 3.1.5]{KS}.
Unfortunately, this claim is not true in general as showed in our Example \ref{counter-ex}. Under an additional assumption that the algebra is noetherian,
we prove the statement is true in Proposition \ref{dual-of-coPoisson-Hopf}.

We prove that there is no nontrivial Poisson Hopf structure on the universal enveloping algebra of a non-abelian Lie algebra in Proposition \ref{U-of-non-abel}.
So, we turn to consider in latter sections the abelian case, i.e., the  polynomial Hopf algebra $A=k[x_1, x_2, \cdots, x_d]$, viewed as the universal enveloping algebra
of an abelian Lie algebra of dimension $d$.
The Poisson Hopf structures on $A=k[x_1, x_2, \cdots, x_d]$
are exactly linear Poisson structures on $A$ (see Proposition \ref{poisson-hopf-str}).
By establishing a reciprocity law between two linear maps of $A \to A \otimes A$ for $A=k[x_1, x_2, \cdots, x_d]$ (see Proposition \ref{main2}),
all co-Poisson coalgebra and co-Poisson Hopf algebra structures on  $A=k[x_1, x_2, \cdots, x_d]$
are described in Theorem \ref{main-result} and Proposition \ref{main3} respectively. In particular, the co-Poisson coalgebra structures on $A=k[x, y]$ are given by the linear maps $I: A \to k(x \otimes y - y \otimes x)$ (see Proposition \ref{2-variable-case}). By using the algebra of divided power series, the co-Poisson coalgebra structures on $A=k[x_1, x_2, \cdots, x_d]$ are showed to be in one-to-one correspondence with the Poisson algebra structures on $\tilde{A}=k[[x_1, x_2, \cdots, x_d]]$, the algebra of formal power series in Theorem \ref{main5}.


The paper is organized as follows. The definitions of (co-)Poisson (co)algebras are recalled in Section 2. Some preliminary results and examples are also given in Section 2.
In Section 3, we establish some dual properties between co-Poisson structures and Poisson structures.
In Section 4, we characterize co-Poisson coalgebra structures on polynomial Hopf algebra.
In Section 5, we characterize co-Poisson Hopf structures on polynomial Hopf algebras.


{\bf Convention.} Let $k$ be a base field. All vector spaces,
algebras, coalgebras, and Hopf algebras are over $k$. All linear
maps mean $k$-linear. Unadorned $\otimes$ means $\otimes_k$.

Let $V$ be a vector space. Let $t_n: V^{\otimes n} \to V^{\otimes
n}\, (n \in \mathbb{N}^+)$ be the linear map given by $v_1 \otimes
\cdots \otimes v_n \mapsto v_n \otimes v_1 \otimes \cdots \otimes
v_{n-1}.$ For convenience, let $\circlearrowleft=1+t_3+t_3^2$.

Suppose $(C, \Delta, \varepsilon)$ is a coalgebra where $\Delta$ is
the comultiplication and $\varepsilon$ is the counit. We frequently
use the sigma notation
$$\Delta(c)= \sum c_1 \otimes c_2 \, \, \textrm{and}\, \,
(\Delta \otimes 1) \Delta (c) =\sum c_1 \otimes c_2 \otimes c_3,
$$ where $\sum$ is often omitted in the computations.

Let $\Delta^{(2)}=(\Delta \otimes 1) \circ \Delta =(1 \otimes
\Delta) \circ \Delta : C \to C \otimes C \otimes C$, and $\Delta' =
\Delta - t_2 \circ \Delta$ be the cocommutator.

Suppose $(A, \mu, \eta)$ is an algebra where $\mu$ is the
multiplication and $\eta$ is the unit. For any $a,b \in A$,
$[a,b]=ab-ba$ is the commutator.

\section{Poisson structures and co-Poisson structures}

%
\subsection{Poisson algebras and Poisson Hopf algebras}
\begin{defn}\label{defn-poisson-alg} \cite{Li, Wei}
An algebra $A$ equipped with a linear map $\{-, -\} : A
\otimes A \to A$ is called a {\it Poisson algebra} if
\begin{enumerate}
\item $A$ with $\{-, -\} : A \otimes A \to A$  is a Lie algebra.
\item $\{-, c\} : A  \to A$  is a derivation with respect to the multiplication of $A$ for all $c \in A$, that is,
$\{ab, c\}=a\{b, c\}+\{a, c\}b$  for all $a, b \in A$.
\end{enumerate}
\end{defn}

It should be noted that we don't assume that $A$ is commutative in general.
As showed in \cite[Theorem 1.2]{FL}, if $A$ is prime and not commutative, then any nontrivial
Poisson structure $\{-, -\}$ on $A$ is the commutator bracket $[-,-]$ up to some scalar $\alpha$ in the center of the Martindale right quotient ring of $A$,
where $\alpha$ is essentially determined by a bimodule morphism from some nonzero ideal to $A$.

\begin{prop} \label{non-comm-poi}\cite[Theorem 1.2]{FL}
Let $A$ be a Poisson algebra with $\{A,A\} \neq 0$ and $[A,A] \neq 0$.
If $A$ is prime, then
\begin{enumerate}
\item for any $a,b \in A$, there is an isomorphism
of $A$-bimodules
$$f_{(a,b)}: A[a,b]A \to A \{a,b\}A, [a,b] \mapsto \{a,b\};$$
\item in the Martindale right quotient ring of $A$, $\{-, -\}=\alpha [-, -]$,  where $\alpha$ is represented by
 $f_{(a,b)}$ for any $a, b \in A$ such that $[a, b] \neq 0.$
\end{enumerate}
\end{prop}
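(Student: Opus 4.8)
The plan is to compress the whole set of Poisson axioms into a single product identity and then feed it into the standard theory of the extended centroid of a prime ring. The key identity is obtained by expanding $\{ab,cd\}$ in the two ways permitted by the Leibniz rule in each slot, $\{ab,cd\}=\{ab,c\}d+c\{ab,d\}$ and $\{ab,cd\}=\{a,cd\}b+a\{b,cd\}$, fully expanding each side, and comparing: after cancellation this yields $\{a,b\}[c,d]=[a,b]\{c,d\}$ for all $a,b,c,d$ (the Jacobi identity is not even needed). Substituting $xc$ for $c$ and simplifying the resulting cross terms with this identity then gives the sharper form
\begin{equation*}
\{a,b\}\,x\,[c,d]=[a,b]\,x\,\{c,d\}\qquad(a,b,c,d,x\in A),\tag{$\star$}
\end{equation*}
which is manifestly symmetric under interchanging $\{-,-\}$ with $[-,-]$. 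As a preliminary cleanup I would deduce from $(\star)$ and primeness that $[a,b]=0\iff\{a,b\}=0$: if $\{a,b\}=0$ then $(\star)$ gives $[a,b]\,A\,\{A,A\}=0$, hence $[a,b]=0$ since $\{A,A\}\neq0$ and $A$ is prime, and symmetrically for the converse. In particular, since $[A,A]\neq0$ there exist $p,q$ with $[p,q]\neq0$, and then also $\{p,q\}\neq0$.

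For (1), fix $a,b$. If $[a,b]=0$ then $\{a,b\}=0$ and both sides are the zero ideal, so assume $[a,b]\neq0$ and set $I=A[a,b]A$, $J=A\{a,b\}A$, both nonzero. Define $f_{(a,b)}\colon I\to A$ by $\sum_i u_i[a,b]v_i\mapsto\sum_i u_i\{a,b\}v_i$. It is well defined: if $\sum_i u_i[a,b]v_i=0$, then for all $c,d,y$, applying $(\star)$ with middle term $yu_i$,
\begin{align*}
[c,d]\,y\sum_i u_i\{a,b\}v_i
&=\sum_i[c,d]\,(yu_i)\,\{a,b\}\,v_i=\sum_i\{c,d\}\,(yu_i)\,[a,b]\,v_i\\
&=\{c,d\}\,y\sum_i u_i[a,b]v_i=0,
\end{align*}
so $[c,d]\,A\,\bigl(\sum_i u_i\{a,b\}v_i\bigr)=0$; choosing $[c,d]\neq0$ and using primeness gives $\sum_i u_i\{a,b\}v_i=0$. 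Thus $f_{(a,b)}$ is a well-defined $A$-bimodule map onto $J$, and by the symmetry of $(\star)$ the reverse assignment $u\{a,b\}v\mapsto u[a,b]v$ is likewise well defined and inverse to $f_{(a,b)}$ on generators; hence $f_{(a,b)}\colon A[a,b]A\to A\{a,b\}A$ is an isomorphism of $A$-bimodules with $f_{(a,b)}([a,b])=\{a,b\}$.

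For (2), I would invoke the standard description of the extended centroid $C=Z(Q_r(A))$ of the prime ring $A$: it is a field, its elements are represented by $A$-bimodule homomorphisms from nonzero two-sided ideals of $A$ into $A$ (two being identified when they agree on a common nonzero ideal), and if $\alpha\in C$ is represented by $g\colon L\to A$ then $\alpha w=g(w)$ in $Q_r(A)$ for $w\in L$. Hence each $f_{(a,b)}$ with $[a,b]\neq0$ represents some $\alpha_{(a,b)}\in C$ with $\alpha_{(a,b)}[a,b]=\{a,b\}$. These all coincide: given two pairs, $f_{(a,b)}$ and $f_{(c,d)}$ are both defined on the nonzero ideal $A[a,b]A[c,d]A$, and on $u[a,b]x[c,d]v$ the first returns $u\{a,b\}x[c,d]v$ and the second $u[a,b]x\{c,d\}v$, equal by $(\star)$; so they agree on a nonzero ideal and $\alpha_{(a,b)}=\alpha_{(c,d)}=:\alpha\in C$, which is nonzero since $\{p,q\}=\alpha[p,q]\neq0$. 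Finally $\alpha[c,d]=\{c,d\}$ for $[c,d]\neq0$, and trivially for $[c,d]=0$, so $\{-,-\}=\alpha[-,-]$ in $Q_r(A)$, with $\alpha$ represented by $f_{(a,b)}$ for any $a,b$ with $[a,b]\neq0$.

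The computational input $(\star)$ is cheap once one thinks of expanding $\{ab,cd\}$ both ways; the real work is in part (2), where the concrete bimodule maps $f_{(a,b)}$ must be matched against the functional machinery of the Martindale right quotient ring and shown to glue into a single central scalar. That gluing is precisely what delivers the uniformity ``for any $a,b$'' asserted in the statement, and it is where primeness---through the essentiality of the ideals involved---is used most essentially.
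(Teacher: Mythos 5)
Your proposal is correct and follows essentially the same route as the paper: you derive the identity $\{a,b\}[c,d]=[a,b]\{c,d\}$ and its strengthened form $\{a,b\}x[c,d]=[a,b]x\{c,d\}$ exactly as in \eqref{abcd} and \eqref{abcde}, use primeness in the same way to get well-definedness of $f_{(a,b)}$, and then identify all the $f_{(a,b)}$ as one central element of the Martindale right quotient ring. The only difference is that you spell out details the paper leaves implicit (the inverse map, the gluing of $f_{(a,b)}$ and $f_{(c,d)}$ on the common ideal $A[a,b]A[c,d]A$, and the nonvanishing of $\alpha$), which is fine.
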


\begin{proof} (1)
By using Leibniz rule and calculating $\{ac,bd\}$ in both variables (see \cite[Theorem 1]{Vo} or  \cite[Lemma 1.1]{FL}), it follows that,
for all $a, b, c, d \in A$,
\begin{equation} \label{abcd}
[a,b]\{c,d\}=\{a,b\}[c,d].
\end{equation}
Then, for all $a, b, c, d, e \in A$,
$[a,b]\{ec,d\}=\{a,b\}[ec,d]$, which implies that
\begin{equation} \label{abcde}
[a,b]e\{c,d\}=\{a,b\} e [c,d].
\end{equation}

Since $[A,A] \neq 0$, $\{A,A\} \neq 0$ and $A$ is
prime, then $[a,b]=0$ if and only if $\{a,b\}=0$ for any $a,b \in
A$.

Suppose $[a,b] \neq 0$. If $\sum_{i=1}^n x_i [a,b] y_i=0$, then
for any $c \in A$,
$$0=\sum_{i=1}^n (\{a,b\}c)x_i [a,b] y_i =\sum_{i=1}^n[a,b]c(x_i\{a,b\}y_i).$$
Then $\sum_{i=1}^n x_i \{a,b\}
y_i=0$ as $A$ is prime and $[a,b] \neq 0$. This shows that the map
$$f_{(a,b)}: A [a,b] A \to A\{a,b\} A,~~ \sum_{i=1}^n x_i [a,b] y_i
\mapsto \sum_{i=1}^n x_i \{a,b\} y_i$$ is well-defined.
So, $f_{(a,b)}$ is an
$A$-bimodule morphism, and is in fact an isomorphism.

(2) If $[a, b] \neq 0$, then the bimodule morphism $f_{(a,b)}: A [a,b] A \to A, [a,b] \mapsto \{a,b\}$
represents an element in the center of the Martindale right quotient ring of $A$ (\cite[10.3.5]{MR} or \cite[Chapter 1, \S 3]{He}).
It follows from \eqref{abcde} that  $f_{(a,b)}$ and $f_{(c,d)}$ represent the same element, say $\alpha$, in the Martindale right quotient ring of $A$ if both  $[a, b]$ and  $[c, d]$
are nonzero. Then $\{-, -\}=\alpha [-, -]$ in the Matindale right quotient ring of $A$.
\end{proof}

By Proposition \ref{non-comm-poi}, any
Poisson algebra structure $\{-, -\}$ on the Weyl algebra $A_n(k)$ or the matrix algebra $M_n(k)$ is $\alpha[-,-]$ for some $\alpha\in k$.

Commutative Poisson algebras appear naturally in geometry and algebra.
For more examples of commutative Poisson algebras, see \cite{LPV}, \cite{KS} and \cite{LWW}.

\begin{defn}(\cite[Definition 6.2.1]{CP})
Let $(H, \mu, \eta,\Delta,\varepsilon,S)$ be a Hopf algebra with a
linear map $\{-,-\}: H \otimes H \to H$. Then $H$ is called a {\it
Poisson Hopf algebra} if
\begin{enumerate}
\item $(H,\{-,-\})$ is a Poisson algebra.
\item The structures are compatible in the sense that, for all $a,b \in H$,
\begin{align}\label{poisson-hopf}\Delta \left( \{a,b\} \right)
=\sum \{a_1,b_1 \} \otimes a_2b_2 + \sum a_1b_1 \otimes
\{a_2,b_2\}.\end{align}
\end{enumerate}
\end{defn}

Let $A$ and $B$ be two Poisson algebras. An algebra morphism $f: A
\to B$ is called a {\it Poisson algebra morphism} if $f \left(
\{a,b\}_A\right) = \{ f(a), f(b)\}_{B}$ for all $a,b \in A$.

Equation \eqref{poisson-hopf} means that $\Delta : H \to H \otimes H$ is a Poisson
algebra morphism when $H$ is commutative by the following lemma.

\begin{lem}
Let $A$ and $B$ be two commutative Poisson algebras. There is
naturally a Poisson structure $\{-, -\}_{A \otimes B}$ on the tensor
product algebra $A \otimes B$ given by
$$\{a \otimes b , a' \otimes b'\}_{A \otimes B}=\{a,a'\}_A \otimes bb' + aa' \otimes \{b,b'\}_B.$$
\end{lem}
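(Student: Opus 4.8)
The plan is to verify, term by term, that $A\otimes B$ with its componentwise multiplication and the stated bracket satisfies the axioms of Definition~\ref{defn-poisson-alg}: that $\{-,-\}_{A\otimes B}$ is a well-defined antisymmetric bilinear map, that it is a biderivation, and that it obeys the Jacobi identity.

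First I would make the bracket manifestly well defined by writing it as a sum of composites of the structure maps: with $\tau\colon B\otimes A\to A\otimes B$ the flip,
$$\{-,-\}_{A\otimes B}\;=\;\bigl(\{-,-\}_A\otimes\mu_B\bigr)\circ\bigl(1\otimes\tau\otimes1\bigr)\;+\;\bigl(\mu_A\otimes\{-,-\}_B\bigr)\circ\bigl(1\otimes\tau\otimes1\bigr),$$
a $k$-linear map $(A\otimes B)^{\otimes 2}\to A\otimes B$ agreeing with the formula on simple tensors. Antisymmetry is then read off the formula: swapping the two arguments produces $\{a',a\}_A\otimes b'b+a'a\otimes\{b',b\}_B$, which by commutativity of $A,B$ and antisymmetry of $\{-,-\}_A,\{-,-\}_B$ equals $-\bigl(\{a,a'\}_A\otimes bb'+aa'\otimes\{b,b'\}_B\bigr)$; the same bookkeeping on a general element $u=\sum_i a_i\otimes b_i$ gives $\{u,u\}_{A\otimes B}=0$, so the bracket is alternating. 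For the biderivation property it is enough, by bilinearity, to expand $\{(a\otimes b)(a'\otimes b'),a''\otimes b''\}_{A\otimes B}=\{aa',a''\}_A\otimes bb'b''+aa'a''\otimes\{bb',b''\}_B$ using the Leibniz rules of $A$ and $B$ and then to reorder the now-commuting factors; this matches $(a\otimes b)\{a'\otimes b',a''\otimes b''\}_{A\otimes B}+\{a\otimes b,a''\otimes b''\}_{A\otimes B}(a'\otimes b')$ term for term, and the Leibniz rule in the second slot follows by antisymmetry.

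The one step with real content is the Jacobi identity, and this is where I expect the main obstacle. Rather than expanding $\circlearrowleft\,\{\{a\otimes b,a'\otimes b'\}_{A\otimes B},a''\otimes b''\}_{A\otimes B}$ directly, I would use the standard fact that for any antisymmetric biderivation $\pi$ on a commutative algebra the Jacobiator $J_\pi(u,v,w)=\pi(\pi(u,v),w)+\pi(\pi(v,w),u)+\pi(\pi(w,u),v)$ is itself a derivation in each argument (a short computation of the same Leibniz-plus-antisymmetry type as above; see e.g.\ \cite{LPV}). Having already shown that $\{-,-\}_{A\otimes B}$ is an antisymmetric biderivation on the commutative algebra $A\otimes B$, its Jacobiator $J$ is a triderivation, hence vanishes identically as soon as it vanishes on a generating set; and $A\otimes B$ is generated as an algebra by $A\otimes1$ together with $1\otimes B$. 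On a triple from $A\otimes1$ one gets $J(a\otimes1,a'\otimes1,a''\otimes1)$ equal to the Jacobiator of $\{-,-\}_A$ at $(a,a',a'')$ tensored with $1$, which is $0$; symmetrically for a triple from $1\otimes B$. For mixed triples the decisive observation is that $\{a\otimes1,1\otimes b\}_{A\otimes B}=\{a,1\}_A\otimes b+a\otimes\{1,b\}_B=0$, since $\{-,1\}=0$ in any Poisson algebra; hence $A\otimes1$ and $1\otimes B$ Poisson-commute, every bracket of two generators again lands in $A\otimes1$ or in $1\otimes B$, and a one-line case check then collapses each mixed Jacobiator to a Jacobiator computed entirely inside $A$ or inside $B$, which is zero. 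This yields the Jacobi identity and completes the proof. (If a fully self-contained argument is preferred, the derivation property of $J_\pi$ should be inserted as a one-paragraph lemma, proved exactly as the biderivation step above.)
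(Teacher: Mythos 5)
Your proof is correct. Note that the paper itself states this lemma without proof (it is treated as a routine, well-known fact, of the kind found in \cite{LPV} or \cite{KS}), so there is no argument of the authors' to compare yours against; the implicit intended argument is the direct verification. Your handling of the bilinearity, antisymmetry (including the alternating property for a general element, relevant in characteristic $2$), and the Leibniz rule is the same direct check, with commutativity of $A$ and $B$ used exactly where it is needed. Where you genuinely deviate from the brute-force route is the Jacobi identity: invoking the standard fact that the Jacobiator of an antisymmetric biderivation on a commutative algebra is a derivation in each argument, and then checking it only on the generating set $A\otimes 1\cup 1\otimes B$, where triples from one factor reduce to the Jacobiator of $A$ or of $B$ and mixed triples vanish because $\{a\otimes 1,1\otimes b\}_{A\otimes B}=0$ and brackets of generators stay in $A\otimes 1$ or $1\otimes B$. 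This is a clean and correct shortcut; what it buys is avoiding the expansion of the full cyclic sum, at the price of either citing \cite{LPV} for the triderivation property or inserting the short Leibniz-plus-antisymmetry computation that proves it (as you note). The direct expansion is also only a few lines, since the quadratic cross terms such as $\{a,a''\}a'\otimes b\{b',b''\}$ cancel in the cyclic sum precisely by antisymmetry and commutativity, so both routes are of comparable length; yours has the advantage of making the mechanism of the cancellation conceptual rather than computational.
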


It is easy to see that there is no nontrivial Poisson Hopf algebra structure on any group algebra $k[G]$.
There is also no nontrivial Poisson Hopf structure on the universal enveloping algebra $U(\mathfrak{g})$
if $\mathfrak{g}$ is a non-abelian Lie algebra.

\begin{prop} \label{U-of-non-abel}
Let $\mathfrak{g}$ be a non-abelian Lie algebra over a field of characteristic $\neq 2$. Then there is no nontrivial Poisson Hopf structure on $U(\mathfrak{g})$.
\end{prop}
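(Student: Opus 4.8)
The plan is to argue by contradiction: assume $\{-,-\}$ is a nontrivial Poisson Hopf structure on $U(\mathfrak{g})$, so that $\{U(\mathfrak{g}),U(\mathfrak{g})\}\neq 0$. First I would record two easy facts. Applying the Leibniz rule to $1=1\cdot 1$ gives $\{1,a\}=2\{1,a\}$, so (as $\ch k\neq 2$) $\{1,a\}=0=\{a,1\}$ for every $a$. And since $\mathfrak{g}$ is non-abelian, one may fix $x,y\in\mathfrak{g}$ with $z:=[x,y]=xy-yx\neq 0$; note that $U(\mathfrak{g})$ is a domain by the PBW theorem, hence prime, and $[U(\mathfrak{g}),U(\mathfrak{g})]\ni z\neq 0$. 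Thus Proposition \ref{non-comm-poi} applies, and part (2) gives $\{-,-\}=\alpha\,[-,-]$ in the Martindale right quotient ring of $U(\mathfrak{g})$, where $\alpha$ lies in its center. Consequently it is enough to prove that $\{x,y\}=0$ for this particular pair: then $\alpha z=0$ with $z\neq 0$ in the prime ring, forcing $\alpha=0$, whence $\{-,-\}=0$, a contradiction.

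The crux is to feed $(a,b)=(xy,\,xy)$ into the compatibility identity \eqref{poisson-hopf}. Since $\{xy,xy\}=0$, its left-hand side $\Delta(\{xy,xy\})$ vanishes. Expanding $\Delta(xy)=xy\otimes 1+x\otimes y+y\otimes x+1\otimes xy$ and running the first sum of \eqref{poisson-hopf} over all pairs of these four terms, the relations $\{a,1\}=0$, $\{a,a\}=0$ and the Leibniz rule (which gives $\{xy,x\}=x\{y,x\}$, $\{xy,y\}=\{x,y\}y$, $\{x,xy\}=x\{x,y\}$, $\{y,xy\}=\{y,x\}y$) leave six terms; four of them cancel in two pairs by antisymmetry, and the remaining two add up to $-\{x,y\}\otimes z$. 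By cocommutativity of $U(\mathfrak{g})$, the second sum of \eqref{poisson-hopf} is obtained from the first by flipping the tensor factors, so it equals $-z\otimes\{x,y\}$. Therefore \eqref{poisson-hopf} for this pair reduces to
\[
\{x,y\}\otimes z+z\otimes\{x,y\}=0 \qquad\text{in } U(\mathfrak{g})\otimes U(\mathfrak{g}).
\]

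To close the argument, pick a linear functional $\phi$ on $U(\mathfrak{g})$ with $\phi(z)=1$ (possible since $z\neq 0$); applying $\id\otimes\phi$ to the last identity gives $\{x,y\}=-\phi(\{x,y\})\,z\in kz$, say $\{x,y\}=cz$ with $c\in k$. Substituting back yields $2c\,(z\otimes z)=0$, and since $z\otimes z\neq 0$ and $\ch k\neq 2$ we get $c=0$, that is $\{x,y\}=0$. By the first paragraph this contradicts $\{-,-\}\neq 0$, and the proposition follows. The one step that demands genuine care is the term-by-term bookkeeping in the middle paragraph; it is worth emphasizing that, because Proposition \ref{non-comm-poi} does the rest, the argument never needs a description of the primitive elements of $U(\mathfrak{g})$ (which would be delicate in positive characteristic).
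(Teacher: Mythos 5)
Your argument is correct, and every step checks out: the expansion of $\sum\{a_1,b_1\}\otimes a_2b_2$ for $a=b=xy$ does collapse to $-\{x,y\}\otimes z$, the cocommutativity flip gives $-z\otimes\{x,y\}$, and the endgame $\{x,y\}=cz$, $2c\,z\otimes z=0$ is sound in characteristic $\neq 2$. It is, however, a genuinely different execution from the paper's. The paper substitutes two general products $ab$ and $cd$ of Lie algebra elements into \eqref{poisson-hopf}, extracts that $[b,c]\otimes\{a,d\}+[b,d]\otimes\{a,c\}$ is skew-symmetric, and then crucially uses that $\{u,v\}$ is primitive whenever $u,v$ are (another consequence of \eqref{poisson-hopf}), so that skew-symmetry of $[b,c]\otimes\{a,b\}$ with both factors primitive forces $\{a,b\}=0$; the contradiction comes from the equivalence $[a,b]=0\Leftrightarrow\{a,b\}=0$ read off from the proof of Proposition \ref{non-comm-poi}. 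You instead feed the single element $xy$ paired with itself, which kills the left-hand side outright and yields the clean identity $\{x,y\}\otimes z+z\otimes\{x,y\}=0$; your finish via a functional $\phi$ never needs primitivity of $\{x,y\}$, and you globalize through part (2) of Proposition \ref{non-comm-poi}, i.e.\ $\{-,-\}=\alpha[-,-]$ with $\alpha$ in the extended centroid, rather than through the pairwise equivalence. Two small remarks: the step ``$\alpha z=0$, $z\neq0$ forces $\alpha=0$'' tacitly uses that the center of the Martindale quotient ring of a prime ring is a field (standard, but worth saying); alternatively, part (1) of Proposition \ref{non-comm-poi} already gives the contradiction, since an $A$-bimodule isomorphism $A[x,y]A\to A\{x,y\}A=0$ forces $[x,y]=0$. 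What your route buys is a more self-contained tensor computation (one substitution, no analysis of primitives, no appeal to their linear independence); what the paper's route buys is slightly less machinery in the endgame, since it never invokes the scalar $\alpha$ explicitly.
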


\begin{proof}
Suppose $\{-,-\}: U(\mathfrak{g}) \otimes U(\mathfrak{g}) \to U(\mathfrak{g})$ is a non-trivial Poisson Hopf structure on $U(\mathfrak{g})$.
Since $\mathfrak{g}$ is non-abelian and $U(\mathfrak{g})$ is prime, then by Proposition \ref{non-comm-poi}, for any $a, b \in  \mathfrak{g}$,  $[a, b]=0$ if and only if $\{a, b\}=0$.
Recall the compatible condition \eqref{poisson-hopf}, i.e.,
 for all $x,y \in U(\mathfrak{g})$,
\[\Delta \{x,y\} = x_1 y_1 \otimes \{x_2,y_2\} + \{x_1,y_1\} \otimes x_2 y_2.\]
This implies that if both $x$ and $y$ are primitive then so is $\{x,y\}$.
If we take $x=ab$ and $y =cd$ for  $a,b,c,d \in   \mathfrak{g}$, the  compatible condition \eqref{poisson-hopf} will imply that
\[ [b,c] \otimes \{a,d\} + [b,d] \otimes \{a,c\}  \]
is skew-symmetric for all $a,b,c,d \in \mathfrak{g}$.
So, $[b,c] \otimes \{a,b\}$ is skew-symmetric by taking $d =b$.
Suppose $[b,c ] \neq 0$ as $[ \mathfrak{g}, \mathfrak{g}] \neq 0$. Since both $[b,c]$ and $\{a,b\}$ are primitive,
then  $\{ a, b\}=0$ for all $a \in \mathfrak{g}$ as $\charac k \neq 2$. In particular, $\{c, b\}=0$, and so, $[c, b]=0$, which is a contradiction.
\end{proof}

As pointed out in \cite[Remark 3.1.4]{KS}, if $H$ is a commutative
Poisson Hopf algebra, then the counit $\varepsilon: H \to k$ is a
Poisson algebra morphism, and the antipode $S: H \to H$ is a Poisson
algebra anti-morphism. Here is a proof of the facts (see also \cite[Lemma 4.2]{Oh}).

\begin{lem} \label{anti-poisson-morphism} Let $H$ be a Poisson Hopf algebra.
Then the counit $\varepsilon: H \to k$ is a Poisson algebra morphism.
The antipode $S: H \to H$ is a Poisson algebra anti-morphism provided $H$ is commutative.
\end{lem}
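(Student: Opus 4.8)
The counit statement is the easy half. Since the ground field $k$ carries only the trivial Poisson bracket, what has to be shown is $\varepsilon(\{a,b\})=0$ for all $a,b\in H$, and the plan is to apply $\varepsilon\otimes\varepsilon$ to the compatibility identity \eqref{poisson-hopf}. The left-hand side yields $(\varepsilon\otimes\varepsilon)\Delta(\{a,b\})=\varepsilon(\{a,b\})$. On the right-hand side, using that $\varepsilon$ is an algebra map (so $\varepsilon(a_2b_2)=\varepsilon(a_2)\varepsilon(b_2)$ and $\varepsilon(a_1b_1)=\varepsilon(a_1)\varepsilon(b_1)$), the counit axioms $\sum a_1\varepsilon(a_2)=a$, $\sum b_1\varepsilon(b_2)=b$, and bilinearity of $\{-,-\}$, each of the two summands collapses to $\varepsilon(\{a,b\})$. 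Hence $\varepsilon(\{a,b\})=2\,\varepsilon(\{a,b\})$, forcing $\varepsilon(\{a,b\})=0$. Commutativity of $H$ is not needed for this part.

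For the antipode, assume $H$ is commutative, so $S$ is an algebra homomorphism; the goal is $S(\{a,b\})=\{S(b),S(a)\}$, equivalently $S(\{a,b\})=-\{S(a),S(b)\}$. The plan is to compute the single element $\sum S(a_1b_1)\,\{a_2,b_2\}\,S(a_3b_3)$ in two ways and show it equals both $-S(\{a,b\})$ and $\{S(a),S(b)\}$. For the first, iterate \eqref{poisson-hopf} to obtain $\Delta^{(2)}(\{a,b\})=T_1+T_2+T_3$, where $T_i$ has the bracket in the $i$-th tensor slot and $a_jb_j$ in the other two. Writing $B=\{-,-\}\colon H\otimes H\to H$, apply the linear map $p\otimes q\otimes r\mapsto S(p)\,q\,S(r)$; precomposed with $\Delta^{(2)}$ this is the convolution $S*\id*S$, which equals $S$ since $\id*S=\eta\varepsilon$, so the image of $\Delta^{(2)}(\{a,b\})$ is $S(\{a,b\})$, while the image of $T_2$ is precisely $\sum S(a_1b_1)\{a_2,b_2\}S(a_3b_3)$. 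In the convolution algebra $\Hom(H\otimes H,H)$ the image of $T_1$ is $(S\circ B)*\mu*(S\circ\mu)$ evaluated at $a\otimes b$, and the image of $T_3$ is $(S\circ\mu)*\mu*(S\circ B)$ evaluated at $a\otimes b$; since $\mu*(S\circ\mu)$ and $(S\circ\mu)*\mu$ both send $x\otimes y$ to $\varepsilon(x)\varepsilon(y)1$ (the antipode axioms for the tensor-product bialgebra, using that $\Delta$ is multiplicative), both images equal $S(\{a,b\})$. Therefore $\sum S(a_1b_1)\{a_2,b_2\}S(a_3b_3)=S(\{a,b\})-2S(\{a,b\})=-S(\{a,b\})$.

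For the second computation, apply the derivation $\{-,c\}$ to the antipode relation $\sum a_1S(a_2)=\varepsilon(a)1$ and use $\{1,c\}=0$ to get $\sum\{a_1,c\}S(a_2)+\sum a_1\{S(a_2),c\}=0$; convolving on the left with $S$ and using $S*\id=\eta\varepsilon$ then gives the key identity $\{S(a),c\}=-\sum S(a_1)\{a_2,c\}S(a_3)$, valid for all $c\in H$. Putting $c=S(b)$, then using antisymmetry of the bracket together with this identity once more to rewrite $\{a_2,S(b)\}=-\sum S(b_1)\{a_2,b_2\}S(b_3)$, one obtains $\{S(a),S(b)\}=\sum S(a_1)S(b_1)\,\{a_2,b_2\}\,S(b_3)S(a_3)$; commutativity of $H$ collapses $S(a_1)S(b_1)=S(a_1b_1)$ and $S(b_3)S(a_3)=S(a_3b_3)$, so $\{S(a),S(b)\}=\sum S(a_1b_1)\{a_2,b_2\}S(a_3b_3)=-S(\{a,b\})$, which is exactly the anti-morphism property.

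The main obstacle I anticipate is organizational rather than conceptual: carrying out the threefold Sweedler bookkeeping in the first computation and invoking the convolution identities in the right order. One genuine subtlety is that the second computation must pass through the Leibniz rule for $\{-,c\}$ and not merely the Hopf compatibility \eqref{poisson-hopf}, since the identities that \eqref{poisson-hopf} alone produces are invariant under too many operations to pin down $S(\{a,b\})$. Commutativity is used only at the very last step, precisely where a non-commutative antipode (an algebra anti-homomorphism) would instead yield $S(b_3a_3)$ in place of $S(a_3b_3)$, which need not coincide.
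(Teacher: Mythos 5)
Your proof is correct, and both halves rest on the same ingredients as the paper's argument (the compatibility \eqref{poisson-hopf}, the derivation property of $\{-,c\}$, antisymmetry, and the antipode convolution identities), but the antipode half is organized differently. The paper runs a single chain transforming $\{S(b),S(a)\}$ directly into $S(\{a,b\})$: it twice inserts $b_2S(b_3)$ resp.\ $a_2S(a_3)$ and applies Leibniz (this is exactly your key identity $\{S(a),c\}=-\sum S(a_1)\{a_2,c\}S(a_3)$, used in both variables), and then finishes with the single relation $\sum \{a_1,b_1\}S(a_2b_2)+a_1b_1S(\{a_2,b_2\})=0$, obtained by applying $\id * S=\eta\varepsilon$ to \eqref{poisson-hopf} together with $\varepsilon(\{a,b\})=0$. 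You instead evaluate the auxiliary element $\sum S(a_1b_1)\{a_2,b_2\}S(a_3b_3)$ in two ways: once via $S*\id*S=S$ applied to the iterated form $\Delta^{(2)}(\{a,b\})=T_1+T_2+T_3$ and the convolution units $\mu*(S\circ\mu)=(S\circ\mu)*\mu=\eta\varepsilon$ in $\Hom(H\otimes H,H)$, and once via your key identity plus antisymmetry. Your first computation is a symmetric, slightly heavier substitute for the paper's one-sided use of $\id*S$ (and, as a small bonus, it does not need $\varepsilon(\{a,b\})=0$ at that point), while the paper's chain is shorter; both proofs invoke commutativity only at the final step to merge $S(a_i)S(b_i)=S(a_ib_i)$, exactly as you note. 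The counit argument is identical to the paper's.
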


\begin{proof} We need to show $\varepsilon \left( \{a,b\}\right)=0$ and $S \left( \{a,b\}\right)
= \{ S(b),S(a)\}$ for all $a,b \in H$.
Since $\varepsilon(h)=\varepsilon(h_1) \varepsilon(h_2)$ for all $h \in H$, by \eqref{poisson-hopf},
\[\varepsilon \left( \{a,b\}\right)= \varepsilon \left( \{a_1,b_1\}\right) \varepsilon (a_2b_2) +
\varepsilon (a_1b_1)\varepsilon \left( \{a_2,b_2\}\right)=2
\varepsilon \left( \{a,b\}\right).\] Thus $\varepsilon \left(
\{a,b\}\right)=0$. For the second assertion,
\begin{align*}
 \{ S(b),S(a)\} =& \{S(b_1), S(a)\} b_2 S(b_3)\\
 =&-S(b_1) \{b_2, S(a)\} S(b_3)\\
 =&-S(b_1) \{b_2, S(a_1)\}a_2 S(a_3) S(b_3)\\
 =&S(b_1) S(a_1) \{ b_2,a_2\} S(a_3) S(b_3)\\
 =&-S(b_1) S(a_1) b_2a_2 S \left(  \{b_3, a_3\}\right)\\
 =&S\left(\{a, b\}\right),
\end{align*}
where the second last $``="$ follows from the fact $ \{a_1,b_1\}
S(a_2b_2) + a_1 b_1S \left( \{a_2,b_2\}\right) = 0 $ for all $a,b
\in H$, and the last $``="$ follows from the commutativity of $H$.
\end{proof}

\subsection{Co-Poisson coalgebras and co-Poisson Hopf algebras}
The concepts of co-Poisson coalgebra and co-Poisson Hopf algebra are
introduced in \cite[Chapter 6]{CP} to study quantizations of Lie
bialgebras. They are dual to Poisson algebra and Poisson Hopf algebra.

\begin{defn}\label{defn-copoisson-coalg} (\cite[Definition 6.2.2]{CP})
A coalgebra $(C, \Delta, \varepsilon)$ equipped with a linear map $q: C \to C \otimes C$ is
called a {\it co-Poisson coalgebra} if
\begin{enumerate}
\item $C$ with $q: C \to C \otimes C$ is a Lie coalgebra, that is,
\begin{align*}
(1+t_2) \circ q & =0, & \textrm{ (skew-symmetric)}\\
(1+t_3+t_3^2) \circ (q \otimes 1) \circ q & =0. & \textrm{ (co-Jacobi
identity)}
\end{align*}
\item$(\Delta \otimes 1) q = (1 \otimes q) \Delta -t_3^2 (q \otimes 1) \Delta$. \qquad \qquad \quad(co-Leibniz rule)
\end{enumerate}
\end{defn}

In co-Poisson coalgebra $C$, we use the sigma notations
$$\Delta(c)=\sum c_1 \otimes c_2, \,  q(c) =\sum c_{(1)} \otimes c_{(2)} \, \, \textrm{and} \, \, (q \otimes 1) q(c)
=\sum c_{(1)} \otimes c_{(2)} \otimes c_{(3)} $$ where $\sum$ is often omitted in the computations. Then,
$$(1 \otimes q) q(c) = (1 \otimes q) (   -c_{(2)} \otimes c_{(1)})= - c_{(3)} \otimes c_{(1)}
\otimes c_{(2)}= c_{(3)} \otimes c_{(2)} \otimes c_{(1)}.$$

\begin{rk}
By using the sigma notation, the co-Leibniz rule reads as
$$ \sum c_{(1)1} \otimes c_{(1)2} \otimes c_{(2)}
= \sum c_1 \otimes c_{2(1)} \otimes c_{2(2)}-\sum c_{1(2)} \otimes
c_2 \otimes c_{1(1)}$$ for all $c \in C$. It is equivalent to
\begin{equation}\label{co-Leibnitz-1}
(1 \otimes \Delta)q=(q \otimes 1) \Delta -t_3 (1 \otimes q) \Delta.
\end{equation}
If the coalgebra $C$ is cocommutative, then the co-Leibniz rule is
also equivalent to
\begin{equation}\label{co-Leibnitz-2}
(\Delta \otimes 1) q =(1-t_3) (1 \otimes q) \Delta.
\end{equation}
\end{rk}

The cocommutator $\Delta'$ gives a co-Poisson structure on any coalgebra $(C, \Delta, \varepsilon)$.

It follows from
the co-Leibniz rule  that there is no nontrivial co-Poisson coalgebra structure on any group coalgebra $k[G]$ .

\begin{defn} Let $(C, \Delta, \varepsilon)$ be a coalgebra. A linear map $d: C \to C$ is called a {\it coderivation} if, for all $c \in C$,
\[\Delta (d(c)) = d(c_1) \otimes c_2 + c_1 \otimes d(c_2).\]
\end{defn}

\begin{ex}
Let $C$ be a cocommutative coalgebra, $d_1$ and $d_2$ be two coderivations of $C$ such that $d_1d_2=d_2d_1$. Let
$q: C \to C \otimes C$ be the map $c \mapsto q(c)$, where
\[q(c)= d_1(c_1) \otimes d_2 (c_2) -d_2(c_1) \otimes d_1(c_2).\]
Then $(C,q)$ is a co-Poisson coalgebra.
\end{ex}

Dual to the fact $\{a, -\}: A \to A$ is a derivation in any Poisson algebra
$A$ is the following.

\begin{lem}
Let $(C, \Delta, \varepsilon,q)$ be a co-Poisson coalgebra. Then for any $f \in C^*$, $ (f \otimes 1) q : C \to C$ and $ (1 \otimes f) q : C \to C$ are coderivations of $C$.
\end{lem}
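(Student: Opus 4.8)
The plan is to fix $f\in C^*$ and show that $d:=(f\otimes 1)q$ is a coderivation; the argument for $(1\otimes f)q$ is entirely parallel (or follows by composing with $t_2$ and using skew-symmetry of $q$). The only tool available is the co-Leibniz rule, which in sigma notation states
\[
c_{(1)1}\otimes c_{(1)2}\otimes c_{(2)}
= c_1\otimes c_{2(1)}\otimes c_{2(2)} - c_{1(2)}\otimes c_2\otimes c_{1(1)},
\]
or equivalently in the form \eqref{co-Leibnitz-1}, $(1\otimes\Delta)q=(q\otimes 1)\Delta - t_3(1\otimes q)\Delta$. What we must verify is $\Delta(d(c))=d(c_1)\otimes c_2 + c_1\otimes d(c_2)$, i.e.
\[
\Delta\big(f(c_{(1)})\,c_{(2)}\big)
= f(c_{1(1)})\,c_{1(2)}\otimes c_2 + c_1\otimes f(c_{2(1)})\,c_{2(2)}.
\]

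First I would rewrite the left-hand side. Since $f$ is just a scalar functional applied in the first tensor slot, $\Delta(d(c)) = f(c_{(1)})\,\Delta(c_{(2)}) = f(c_{(1)})\,c_{(2)1}\otimes c_{(2)2}$, which is exactly $(f\otimes 1\otimes 1)$ applied to $(1\otimes\Delta)q(c)$. Now apply the co-Leibniz rule in the form \eqref{co-Leibnitz-1}: $(1\otimes\Delta)q(c) = (q\otimes 1)\Delta(c) - t_3(1\otimes q)\Delta(c)$. Applying $f$ in the first slot to $(q\otimes 1)\Delta(c) = c_{1(1)}\otimes c_{1(2)}\otimes c_2$ yields $f(c_{1(1)})\,c_{1(2)}\otimes c_2 = d(c_1)\otimes c_2$, which is the first desired term. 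It remains to identify $-(f\otimes 1\otimes 1)\big(t_3(1\otimes q)\Delta(c)\big)$ with $c_1\otimes d(c_2)$.

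The key computation is on this last term. We have $(1\otimes q)\Delta(c) = c_1\otimes c_{2(1)}\otimes c_{2(2)}$, and applying $t_3$ (the cyclic permutation $v_1\otimes v_2\otimes v_3\mapsto v_3\otimes v_1\otimes v_2$) gives $c_{2(2)}\otimes c_1\otimes c_{2(1)}$. Now applying $f$ in the first slot produces $f(c_{2(2)})\,c_1\otimes c_{2(1)}$. To match this with $-c_1\otimes d(c_2) = -c_1\otimes f(c_{2(1)})\,c_{2(2)}$ I would use the skew-symmetry of $q$: $(1+t_2)q=0$, so $c_{2(1)}\otimes c_{2(2)} = -c_{2(2)}\otimes c_{2(1)}$, hence $f(c_{2(2)})\,c_{2(1)} = -f(c_{(2)(1)})\,c_{(2)(2)}$ evaluated with the roles swapped — concretely $f(c_{2(2)})\otimes c_{2(1)}$ as an element of $k\otimes C$ equals $-f(c_{2(1)'})\otimes c_{2(2)'}$ after relabeling via skew-symmetry. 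Therefore $-(f\otimes 1\otimes 1)\big(t_3(1\otimes q)\Delta(c)\big) = -f(c_{2(2)})\,c_1\otimes c_{2(1)} = c_1\otimes f(c_{2(1)})\,c_{2(2)} = c_1\otimes d(c_2)$, completing the verification.

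I expect the only genuine subtlety to be the careful bookkeeping of which tensor slot $f$ lands in after the permutation $t_3$, and the correct use of skew-symmetry to flip $q(c_2)$; there is no conceptual obstacle, since everything reduces to applying the co-Leibniz rule once and the skew-symmetry of the Lie cobracket once. For the statement about $(1\otimes f)q$, I would note $(1\otimes f)q = (f\otimes 1)(t_2 q) = -(f\otimes 1)q$ is \emph{not} quite right as a map $C\to C$ (the two sit in the same slot after $t_2$); instead the cleanest route is to repeat the above argument using the alternative co-Leibniz form or to observe directly that $(1\otimes f)q(c) = f(c_{(2)})\,c_{(1)}$ and run the symmetric computation, again invoking \eqref{co-Leibnitz-1} and skew-symmetry. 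In the write-up I would present the $(f\otimes 1)q$ case in full and remark that the other is analogous.
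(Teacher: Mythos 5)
Your verification is correct, and it is exactly the argument the paper has in mind: the paper states this lemma without proof (presenting it as the dual of the fact that $\{a,-\}$ is a derivation in a Poisson algebra), so there is no written proof to compare against, and your computation — rewrite $\Delta\bigl((f\otimes 1)q(c)\bigr)$ as $(f\otimes 1\otimes 1)(1\otimes\Delta)q(c)$, apply the co-Leibniz rule in the form \eqref{co-Leibnitz-1}, and use skew-symmetry of $q$ to convert the $t_3$-term into $c_1\otimes d(c_2)$ — fills the gap correctly. One small correction to your closing remark: the shortcut you dismiss does in fact work. Under the canonical identifications $k\otimes C\cong C\cong C\otimes k$ one has, as maps $C\to C$, $(1\otimes f)q(c)=f(c_{(2)})c_{(1)}=(f\otimes 1)(t_2q)(c)=-(f\otimes 1)q(c)$, so the second map is just the negative of the first and is therefore automatically a coderivation; no separate symmetric computation is needed. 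Alternatively, the second case also follows by applying $(1\otimes 1\otimes f)$ to the original co-Leibniz identity and invoking skew-symmetry once, which is the ``symmetric computation'' you allude to.
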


Dual to the fact $\{1_A, a\}=\{a, 1_A\}=0$ in any Poisson algebra
$A$ is the following.
\begin{lem} \label{co-poisson-fact}
Let $(C, \Delta, \varepsilon,q)$ be a co-Poisson coalgebra. Then $$
(\varepsilon \otimes 1)  q=   (1 \otimes \varepsilon )
q=0, \, \textrm{that is},\, \varepsilon (c_{(1)}) c_{(2)} =c_{(1)} \varepsilon
(c_{(2)})=0 \, \, \textrm{for all}\, \,c \in C.$$
\end{lem}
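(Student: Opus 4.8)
The plan is to dualize the one-line argument that $\{1,b\}=0$ in a Poisson algebra, namely $\{1,b\}=\{1\cdot 1,b\}=1\cdot\{1,b\}+\{1,b\}\cdot 1=2\{1,b\}$. On the coalgebra side the role of ``multiplying by the unit'' is played by ``applying the counit $\varepsilon$'', and the dual of the Leibniz rule $\{ab,c\}=a\{b,c\}+\{a,c\}b$ is exactly the co-Leibniz rule of Definition \ref{defn-copoisson-coalg}(2). So I would start from
\[(\Delta\otimes 1)q=(1\otimes q)\Delta-t_3^2(q\otimes 1)\Delta\]
and apply the map $\varepsilon\otimes\varepsilon\otimes 1:C\otimes C\otimes C\to C$ to both sides, writing $\phi=(\varepsilon\otimes 1)q$ and $\psi=(1\otimes\varepsilon)q$.

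For the left-hand side, the counit identity $(\varepsilon\otimes\varepsilon)\Delta=\varepsilon$ gives $(\varepsilon\otimes\varepsilon\otimes 1)(\Delta\otimes 1)q(c)=\sum\varepsilon(c_{(1)})c_{(2)}=\phi(c)$. On the right-hand side, the first term collapses by $(\varepsilon\otimes 1)\Delta=\operatorname{id}$ together with linearity of $\phi$ to $(\varepsilon\otimes\varepsilon\otimes 1)(1\otimes q)\Delta(c)=\sum\varepsilon(c_1)\phi(c_2)=\phi(c)$ as well. For the second term I would unwind $t_3^2(q\otimes 1)\Delta(c)=\sum c_{1(2)}\otimes c_2\otimes c_{1(1)}$ (as already recorded in the remark after Definition \ref{defn-copoisson-coalg}), apply $\varepsilon\otimes\varepsilon\otimes 1$, and use $\sum\varepsilon(c_2)c_1=c$ to get $\sum\varepsilon(c_{(2)})c_{(1)}=\psi(c)$. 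Comparing the three pieces yields $\phi(c)=\phi(c)-\psi(c)$, hence $\psi=(1\otimes\varepsilon)q=0$.

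Finally, to get $\phi=0$ I would feed the skew-symmetry axiom $(1+t_2)q=0$ into $\varepsilon\otimes 1$: this gives $\phi(c)+\sum\varepsilon(c_{(2)})c_{(1)}=\phi(c)+\psi(c)=0$, so $\phi=(\varepsilon\otimes 1)q=0$ too, which is the claim. The only point requiring care is the bookkeeping in the $t_3^2(q\otimes 1)\Delta$ term, i.e.\ tracking which tensor legs $t_3^2$ permutes and checking that, after contracting the two outer legs with $\varepsilon$, the remaining $q$-decomposition of $c_1$ reassembles into the $q$-decomposition of $c$; everything else is a direct invocation of the counit axioms, so I expect no real obstacle.
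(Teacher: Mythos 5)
Your proof is correct: contracting the co-Leibniz rule with $\varepsilon\otimes\varepsilon\otimes 1$ does give $\phi(c)=\phi(c)-\psi(c)$ (the $t_3^2(q\otimes 1)\Delta$ term collapses, via $\sum\varepsilon(c_2)c_1=c$ and linearity of $\psi=(1\otimes\varepsilon)q$, to $\psi(c)$), hence $(1\otimes\varepsilon)q=0$, and skew-symmetry then forces $(\varepsilon\otimes 1)q=0$, with no characteristic restriction. The paper states this lemma without proof, presenting it as the dual of $\{1_A,a\}=\{a,1_A\}=0$, and your argument is exactly the dualization the authors have in mind, so there is nothing to add.
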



\begin{defn}\label{defn-copoisson-Hopf} \cite[Definition 6.2.2]{CP}
A Hopf algebra $(H, \mu,\eta,\Delta,\varepsilon,S)$ equipped with a
linear map $q: H \to H \otimes H$ is called a {\it co-Poisson Hopf
algebra} if
\begin{enumerate}
\item $H$ with $q: H \to H \otimes H$ is a co-Poisson coalgebra.
\item $q$ is a $\Delta$-derivation, that is, for all $a, b \in H$,
\begin{equation}\label{q(ab)} q(ab) = q(a)\Delta(b)+ \Delta(a)q(b).\end{equation}
\end{enumerate}
\end{defn}

%

Co-Poisson Hopf structures on $U(\mathfrak{g})$ are known.

\begin{prop} \cite[Proposition 6.2.3]{CP} Let $\mathfrak{g}$ be a Lie algebra over a field $k$ of characteristic zero. Then the co-Poisson Hopf structures on the
universal enveloping algebra $U(\mathfrak{g})$ are determined uniquely by the Lie bialgebra structures on $\mathfrak{g}$.
\end{prop}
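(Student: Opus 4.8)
The plan is to set up a bijection between co-Poisson Hopf structures $q$ on $U(\mathfrak{g})$ and cobrackets $\delta:\mathfrak{g}\to\mathfrak{g}\wedge\mathfrak{g}$ making $(\mathfrak{g},[-,-],\delta)$ a Lie bialgebra, and I would build it in two directions. First I would show that any co-Poisson Hopf structure $q$ restricts to $\mathfrak{g}$: since $q$ is a $\Delta$-derivation by \eqref{q(ab)} and the primitives $P(U(\mathfrak{g}))=\mathfrak{g}$, the skew-symmetry of $q$ together with Lemma \ref{co-poisson-fact} forces $q(\mathfrak{g})\subseteq\mathfrak{g}\otimes\mathfrak{g}$; more precisely, using the co-Leibniz rule \eqref{co-Leibnitz-1} and the fact that $\Delta(x)=x\otimes1+1\otimes x$ for $x\in\mathfrak{g}$, one checks that $\Delta(q(x))$ lies in the primitive-in-each-slot part, so $q(x)\in\mathfrak{g}\otimes\mathfrak{g}$, and skew-symmetry gives $q(x)\in\mathfrak{g}\wedge\mathfrak{g}$. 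Define $\delta:=q|_{\mathfrak{g}}$.

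Next I would verify the Lie bialgebra axioms for $\delta$. Co-skew-symmetry is immediate from the skew-symmetry of $q$. The co-Jacobi identity for $\delta$ is precisely the restriction of the co-Jacobi identity $(1+t_3+t_3^2)(q\otimes1)q=0$ to $\mathfrak{g}$. The cocycle (compatibility) condition $\delta([x,y])=(\ad_x\otimes1+1\otimes\ad_x)\delta(y)-(\ad_y\otimes1+1\otimes\ad_y)\delta(x)$ is what one extracts by applying \eqref{q(ab)} to $q(xy)$ and $q(yx)$ and subtracting, using that $q$ vanishes appropriately and that $[x,y]=xy-yx$; the terms $q(x)\Delta(y)+\Delta(x)q(y)-q(y)\Delta(x)-\Delta(y)q(x)$ collapse, modulo higher filtration degree, to exactly the adjoint action terms, since $\Delta(x)=x\otimes1+1\otimes x$ acts on $\mathfrak{g}\otimes\mathfrak{g}$ by multiplication and the commutators produce the adjoint action. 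This gives a well-defined map from co-Poisson Hopf structures to Lie bialgebra structures.

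For the reverse direction, given a Lie bialgebra cobracket $\delta$, I would extend it to all of $U(\mathfrak{g})$ by declaring $q$ to be the unique $\Delta$-derivation with $q|_{\mathfrak{g}}=\delta$ and $q(1)=0$: concretely, $q(x_1\cdots x_n)=\sum_i \Delta(x_1\cdots x_{i-1})\,\delta(x_i)\,\Delta(x_{i+1}\cdots x_n)$, and one must check this is well-defined on $U(\mathfrak{g})$, i.e.\ respects the relations $x_ix_j-x_jx_i=[x_i,x_j]$; this is where the cocycle condition on $\delta$ is used. Then one verifies that the so-defined $q$ is skew-symmetric, satisfies the co-Jacobi identity and the co-Leibniz rule---each reducing, by a PBW/induction-on-filtration argument and the fact that $q$ is a $\Delta$-derivation, to the corresponding identity for $\delta$ on generators. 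Finally the two constructions are mutually inverse essentially by construction, since a $\Delta$-derivation is determined by its values on algebra generators.

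The main obstacle I anticipate is the well-definedness of $q$ on $U(\mathfrak{g})$ in the reverse direction: one has to show that the formula for $q$ on a product of generators is independent of the chosen ordering, equivalently that it descends through the defining ideal of $U(\mathfrak{g})$, and this is exactly the point where the cocycle/compatibility axiom of a Lie bialgebra is essential rather than incidental. A secondary technical point is keeping careful track of the coalgebra (sigma-notation) bookkeeping when verifying the co-Leibniz rule for the extended $q$, since $\Delta$ on a length-$n$ monomial is a sum of $2^n$ terms; I would handle this by an induction on the PBW filtration, reducing the length-$n$ case to the length-$(n-1)$ case using that both sides of each axiom are $\Delta$-derivation-type expressions in the last generator. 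The characteristic-zero hypothesis enters to ensure $P(U(\mathfrak{g}))=\mathfrak{g}$ and that the PBW filtration behaves well, so that the passage between $q$ and its restriction $\delta$ is lossless.
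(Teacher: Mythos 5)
Your outline is essentially the standard argument of \cite[Proposition 6.2.3]{CP}, which the paper only cites and does not reprove: restrict $q$ to the primitives (using $q(1)=0$, the co-Leibniz rule, skew-symmetry, and $P(U(\mathfrak{g}))=\mathfrak{g}$ in characteristic zero, exactly as in the spirit of Lemma \ref{elements-in-i}), read off the Lie bialgebra axioms, and conversely extend a cobracket as the unique $\Delta$-derivation, with well-definedness on $U(\mathfrak{g})$ resting on the cocycle condition. One small simplification: for $x\in\mathfrak{g}$ primitive and $u\in\mathfrak{g}\otimes\mathfrak{g}$ one has $\Delta(x)u-u\Delta(x)=(\ad_x\otimes 1+1\otimes\ad_x)(u)$ exactly, so no passage ``modulo higher filtration degree'' is needed at that step.
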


Let $C$ and $D$ be two co-Poisson coalgebras. A coalgebra morphism
$g: C \to D$ is called  a {\it co-Poisson coalgebra morphism} if $(g
\otimes g) (q_C(c)) =q_D \left(g(c)\right)$ for all $c \in C$.

Equation \eqref{q(ab)} means that $\mu : H \otimes H \to H$ is a co-Poisson
coalgebra morphism provided $H$ is cocommutative by Lemma
\ref{tensor-product-of-coalg}.

If $C$ and $D$ are two coalgebras, then $C \otimes D$ is a coalgebra
with $$\Delta_{C \otimes D}=(1 \otimes \tau \otimes 1) (\Delta_C
\otimes \Delta_D),\, \textrm{ i.e., }\,
\Delta_{C \otimes D}( a
\otimes b)=a_1 \otimes b_1 \otimes a_2 \otimes b_2.$$

\begin{lem} \label{tensor-product-of-coalg} Let $(C, q_C)$ and $(D, q_D)$ be two cocommutative co-Poisson
coalgebras. Then $(C \otimes D, q_{C \otimes D})$ is a co-Poisson
coalgebra, with $q_{C \otimes D}$ being the composition
$$C \otimes D \xrightarrow{q_C \otimes \Delta_D + \Delta_C \otimes q_D}
C \otimes C \otimes D \otimes D \xrightarrow{1 \otimes t_2 \otimes
1} C \otimes D \otimes C \otimes D,$$ that is,
$$q_{C \otimes D} (a \otimes b)=a_{(1)} \otimes b_1 \otimes a_{(2)} \otimes b_2 +
a_1 \otimes b_{(1)} \otimes a_2 \otimes b_{(2)}.$$
\end{lem}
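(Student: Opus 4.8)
The plan is to verify directly the three defining axioms of a co-Poisson coalgebra for the pair $(C \otimes D, q_{C \otimes D})$, using cocommutativity of $C$ and $D$ throughout. First I would check that $q_{C \otimes D}$ is skew-symmetric, i.e. $(1 + t_2) \circ q_{C \otimes D} = 0$. Writing $q_{C \otimes D}(a \otimes b) = a_{(1)} \otimes b_1 \otimes a_{(2)} \otimes b_2 + a_1 \otimes b_{(1)} \otimes a_2 \otimes b_{(2)}$ and applying $t_2$ (the swap of the two tensor factors $C \otimes D$), the first summand becomes $a_{(2)} \otimes b_2 \otimes a_{(1)} \otimes b_1$; after reindexing via cocommutativity of $D$ (so $b_2 \otimes b_1 = b_1 \otimes b_2$) this is $a_{(2)} \otimes b_1 \otimes a_{(1)} \otimes b_2$, which by the skew-symmetry of $q_C$ cancels the original first summand. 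The second summand is handled symmetrically using skew-symmetry of $q_D$ and cocommutativity of $C$. This step is routine bookkeeping.

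Next I would verify the co-Jacobi identity $\circlearrowleft \circ (q_{C \otimes D} \otimes 1) \circ q_{C \otimes D} = 0$, where $\circlearrowleft = 1 + t_3 + t_3^2$ acts on $(C \otimes D)^{\otimes 3}$. Applying $(q_{C \otimes D} \otimes 1)$ to $q_{C \otimes D}(a \otimes b)$ produces four terms, grouped as: (a) terms where $q_C$ hits two of the three $C$-slots (with $\Delta_D$ on $D$), (b) terms where $q_D$ hits two of the three $D$-slots (with $\Delta_C$ on $C$), and (c) mixed terms with one application of $q_C$ and one of $q_D$. The key observation is that under the cyclic sum $\circlearrowleft$, groups (a) and (b) each vanish by the co-Jacobi identities of $q_C$ and $q_D$ respectively (after absorbing the coassociative comultiplications, which cocommutativity lets us move freely), while the mixed group (c) vanishes by a cancellation that mirrors the usual proof that the tensor product of two Poisson algebras is Poisson — this is the dual of the identity $\{a, \{b, c\}\}$-type cancellation, and it is exactly where cocommutativity of both $C$ and $D$ is essential to align the indices. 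This is the main obstacle: organizing the mixed terms so the threefold cyclic rotation visibly produces a telescoping cancellation rather than a tangle of sigma indices.

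Finally I would check the co-Leibniz rule $(\Delta_{C \otimes D} \otimes 1)\, q_{C \otimes D} = (1 - t_3)(1 \otimes q_{C \otimes D})\, \Delta_{C \otimes D}$, using the cocommutative form \eqref{co-Leibnitz-2}. Here one expands $\Delta_{C \otimes D}(a \otimes b) = a_1 \otimes b_1 \otimes a_2 \otimes b_2$, applies $q_{C \otimes D}$ to one side, and matches against the co-Leibniz rules for $C$ and $D$ separately (each of which holds in the form \eqref{co-Leibnitz-2} since $C$ and $D$ are cocommutative); the coassociativity of $\Delta_C$ and $\Delta_D$ together with the repeated swaps $1 \otimes t_2 \otimes 1$ reassemble the two contributions into the required expression. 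Once the three axioms are confirmed, $(C \otimes D, q_{C \otimes D})$ is a co-Poisson coalgebra, completing the proof.
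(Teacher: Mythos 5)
Your strategy is the right one, and in fact the paper gives no proof of this lemma at all: it is stated as the (routine) dual of the unproved lemma that the tensor product of two commutative Poisson algebras is Poisson, so a direct verification of the three axioms is exactly what is expected. Your treatment of skew-symmetry and of the co-Leibniz rule for $q_{C\otimes D}$ is fine as described (note that $C\otimes D$ is cocommutative, so working with the form \eqref{co-Leibnitz-2} throughout is legitimate). The one place where your sketch stops short is the mixed group (c) in the co-Jacobi check, which you yourself call the main obstacle but do not carry out: there the decisive tool is not cocommutativity alone but the co-Leibniz rules of $q_C$ and $q_D$. Concretely, the two mixed terms have $C$-parts and $D$-parts of the shape $(\Delta_C\otimes 1)q_C(a)$ and $(\Delta_D\otimes 1)q_D(b)$, and you must rewrite these via $(\Delta\otimes 1)q=(1-t_3)(1\otimes q)\Delta$ before applying $\circlearrowleft$; only then do the resulting twelve terms cancel in pairs, with cocommutativity used to realign the $\Delta$-indices (just as the Leibniz rule, not commutativity, is what kills the mixed terms in the Poisson tensor-product computation you invoke as an analogy). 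So the outline is correct and the analogy carries the right idea, but a complete proof should state this use of co-Leibniz explicitly and display the cancellation, since everything else in the argument is bookkeeping while this step is the actual content of the lemma.
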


The dual form of Lemma \ref{anti-poisson-morphism} is the following.
\begin{lem}
Let $(H, \mu , \eta, \Delta ,\varepsilon , S, q)$ be a co-Poisson
Hopf algebra.
\begin{enumerate}
\item $\eta$ is a co-Poisson coalgebra morphism.
\item If $H$ is cocommutative, then $S$ is a co-Poisson coalgebra anti-morphism.
\end{enumerate}
\end{lem}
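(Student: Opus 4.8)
The plan is to settle (1) by a direct computation and to prove (2) by imitating, step for step, the argument in the proof of Lemma~\ref{anti-poisson-morphism}, under the dictionary: Leibniz rule $\leftrightarrow$ co-Leibniz rule, the compatibility \eqref{poisson-hopf} $\leftrightarrow$ the $\Delta$-derivation identity \eqref{q(ab)}, $\varepsilon\circ\{-,-\}=0$ $\leftrightarrow$ $q(1)=0$, commutativity of $H$ $\leftrightarrow$ cocommutativity of $H$, and ``$S$ an algebra anti-morphism'' $\leftrightarrow$ ``$S$ a coalgebra anti-morphism''.

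For (1), the unit $\eta\colon k\to H$ is automatically a coalgebra morphism (a bialgebra axiom), and we regard $k$ with its trivial co-Poisson structure $q_k=0$; hence $\eta$ is a co-Poisson coalgebra morphism precisely when $q(1_H)=0$. This is immediate from \eqref{q(ab)} with $a=b=1$: since $\Delta(1)=1\otimes 1$ is the identity of $H\otimes H$, one gets $q(1)=q(1)\Delta(1)+\Delta(1)q(1)=2q(1)$, so $q(1)=0$.

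For (2), recall that $S$ is a coalgebra anti-morphism for any Hopf algebra, and that cocommutativity of $H$ forces $S^2=\id$ and $\Delta\circ S=(S\otimes S)\circ\Delta$. Using the skew-symmetry of $q$, the claim that $S$ is a co-Poisson coalgebra anti-morphism reduces to the single identity $q\circ S=-(S\otimes S)\circ q$, i.e.\ $q(S(c))=\sum S(c_{(2)})\otimes S(c_{(1)})$ for all $c\in H$; the presence of this sign is real, as one sees already on $U(\mathfrak{g})$. The first ingredient is the dual of the ``key fact'' of Lemma~\ref{anti-poisson-morphism}: applying $q$ to the antipode identity $\sum c_1S(c_2)=\varepsilon(c)1$ and using that $q$ is a $\Delta$-derivation together with $q(1)=0$ from part~(1) gives
\[ \sum q(c_1)\,\Delta\big(S(c_2)\big)+\sum\Delta(c_1)\,q\big(S(c_2)\big)=0 \qquad(\ast) \]
for all $c\in H$, which is the exact dual of $\sum\{a_1,b_1\}S(a_2b_2)+\sum a_1b_1\,S(\{a_2,b_2\})=0$. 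Combining $(\ast)$ with the Hopf identity $\sum S(c_1)c_2\otimes c_3=1\otimes c$ already yields $q(S(c))=-\sum\Delta(S(c_1))\,q(c_2)\,\Delta(S(c_3))$.

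It then remains to establish the ``conjugation identity'' $\sum\Delta(S(c_1))\,q(c_2)\,\Delta(S(c_3))=(S\otimes S)\big(q(c)\big)$, and this is the step where the co-Leibniz rule and cocommutativity of $H$ are actually used: one alternately inserts ``$\varepsilon\cdot 1$'' via the antipode axioms, applies the co-Leibniz rule together with Lemma~\ref{co-poisson-fact} (the dual of $\{1,-\}=\{-,1\}=0$), and invokes $\Delta\circ S=(S\otimes S)\circ\Delta$ and $S^2=\id$ to collapse the surviving antipode--counit terms, exactly as the middle and final lines of Lemma~\ref{anti-poisson-morphism} do. I expect this last identity to be the main obstacle: the difficulty is entirely the bookkeeping in iterated sigma notation---tracking which coproduct legs get absorbed by the antipode--counit relations and which one carries $q$---and in particular checking that exactly one transposition survives, so that the answer is $-(S\otimes S)\circ q$ and not $(S\otimes S)\circ q$. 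As with the role of commutativity in the last two lines of Lemma~\ref{anti-poisson-morphism}, it is cocommutativity of $H$ that makes this come out right.
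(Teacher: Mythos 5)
Part (1) and your reduction for part (2) are correct and line up with the paper: $q(1)=0$ from \eqref{q(ab)} gives (1), the identity $(\ast)$ you derive from $q\bigl(c_1S(c_2)\bigr)=0$ is exactly the ingredient the paper uses, and your consequence $q(S(c))=-\sum\Delta(S(c_1))\,q(c_2)\,\Delta(S(c_3))$ is a valid reformulation (in sigma notation, with cocommutativity and $\Delta S=(S\otimes S)\Delta$, the right-hand side is the paper's expression $\sum S(c_1)c_{3(1)}S(c_5)\otimes S(c_2)c_{3(2)}S(c_4)$ up to the sign coming from skew-symmetry). So you have recovered what is, in the paper, the last third of the argument.

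The genuine gap is the ``conjugation identity'' $\sum\Delta(S(c_1))\,q(c_2)\,\Delta(S(c_3))=(S\otimes S)\bigl(q(c)\bigr)$, which you explicitly leave as ``the main obstacle'' and only sketch. This is not a routine bookkeeping remark that can be waved through by analogy with Lemma~\ref{anti-poisson-morphism}: it is precisely the content of the first two displayed computations in the paper's proof, and it requires two separate applications of the co-Leibniz rule, each combined with Lemma~\ref{co-poisson-fact}. Concretely, the paper first applies the co-Leibniz rule to kill the term $S(h_{1(1)1})h_{1(1)2}S(h_2)\otimes S(h_{1(2)})$ and obtains $\sum S(h_1)h_{2(1)}S(h_3)\otimes S(h_{2(2)})=S(h_{(2)})\otimes S(h_{(1)})$, and then applies it a second time (now in the third tensor leg of $\Delta^{(3)}$) to upgrade this to $\sum S(h_1)h_{3(1)}S(h_5)\otimes S(h_2)h_{3(2)}S(h_4)=S(h_{(1)})\otimes S(h_{(2)})$; only after these two steps does the sign you worry about come out determinately, via skew-symmetry of $q$. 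Your plan names the right tools (co-Leibniz rule, Lemma~\ref{co-poisson-fact}, cocommutativity), but none of this two-stage computation is carried out, and you yourself flag that you have not verified that ``exactly one transposition survives.'' As it stands, part (2) is reduced to an unproved identity rather than proved, so the proposal is incomplete at its central step.
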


\begin{proof}
(1) This is trivial as $q(1_H)=0$ by \eqref{q(ab)}.

(2) We need to show $q\left(S(h)\right)=S(h_{(2)}) \otimes S(h_{(1)})$ for
all $h \in H$.
It follows from the
co-Leibniz rule that for any $h \in H$,
\[h_{1(1)1} \otimes h_{1(1)2} \otimes h_2 \otimes h_{1(2)}=h_1 \otimes h_{2(1)} \otimes h_3 \otimes h_{2(2)}
-h_{1(2)} \otimes h_2 \otimes h_3 \otimes h_{1(1)}.\]
Since
$S(h_{1(1)1}) h_{1(1)2} S(h_2) \otimes S(h_{1(2)})=0$ by Lemma \ref{co-poisson-fact}, then
$$S(h_1)h_{2(1)} S(h_3) \otimes S(h_{2(2)}) = S(h_{1(2)}) \varepsilon
(h_2) \otimes S(h_{1(1)}) = S(h_{(2)}) \otimes S(h_{(1)}),$$ where the
last $``="$ is derived from the co-Leibniz rule \eqref{co-Leibnitz-1} and Lemma
\ref{co-poisson-fact}.

On the other hand, by the co-Leibniz rule,
\begin{align*}
&h_1 \otimes h_2 \otimes h_{3(1)1} \otimes h_{3(1)2} \otimes h_{3(2)} \otimes h_4 \\
=&h_1 \otimes h_2 \otimes h_3 \otimes h_{4(1)} \otimes h_{4(2)} \otimes h_5 -h_1 \otimes h_2 \otimes h_{3(2)} \otimes h_4 \otimes h_{3(1)} \otimes h_5.
\end{align*}
By Lemma \ref{co-poisson-fact},
$S(h_1) h_{3(2)} S(h_4) \otimes S(h_2) h_{3(1)1} S(h_{3(1)2})=0.$
Thus
\begin{align*}&S(h_1)h_{3(1)} S(h_5) \otimes S(h_2) h_{3(2)} S(h_4)\\
=& S(h_1) h_{4(2)} S(h_5) \otimes S(h_2) h_3 S(h_{4(1)})\\
=&S(h_1) h_{2(2)} S(h_3) \otimes S(h_{2(1)})\\
=&S(h_{(1)}) \otimes S(h_{(2)}).
\end{align*}

It remains to show that $S(h_1)h_{3(2)} S(h_5) \otimes S(h_2) h_{3(1)} S(h_4) = q \left(S(h) \right)$.
Since $0=q(h_1 S(h_2))=q(h_1) \Delta \left( S(h_2)\right) + \Delta (h_1) q \left(  S(h_2)\right)$ for any $h \in H$, then
\[h_{1(1)} S(h_3) \otimes h_{1(2)} S(h_2) + h_1 S(h_3)_{(1)} \otimes h_2 S(h_3)_{(2)}=0.\]
Hence
\[h_1 \otimes  h_3 S(h_5)_{(1)} \otimes h_2 \otimes h_4 S(h_5)_{(2)} + h_1 \otimes h_{3(1)} S(h_5) \otimes h_2 \otimes h_{3(2)}S(h_4)=0.\]
Thus
\begin{align*} &S(h_1)h_{3(2)} S(h_5) \otimes S(h_2) h_{3(1)} S(h_4)\\
=&S(h_1) h_3 S(h_5)_{(1)} \otimes S(h_2) h_4 S(h_5)_{(2)}\\
=&S(h_1) h_2 S(h_5)_{(1)} \otimes S(h_3) h_4 S(h_5)_{(2)}\\
=&S(h)_{(1)} \otimes S(h)_{(2)}\\
=&q \left(  S(h)\right),
\end{align*}
where the cocommutativity is used for the second $``="$.
\end{proof}

The dual form of \eqref{abcd} for co-Poisson coalgebras is the following
 proposition.

\begin{prop} \label{dual-of-abcd} Let $(C, \Delta, \varepsilon)$ be a co-Poisson coalgebra with co-Poisson structure $q$. Then
$$(q \otimes \Delta')\circ \Delta =(\Delta' \otimes q)\circ\Delta.$$
\end{prop}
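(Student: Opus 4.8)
The plan is to recognize the asserted identity as the categorical dual of equation~\eqref{abcd} and to mirror the latter's proof. Writing $\mu'=\mu\circ(1-t_2)\colon A\otimes A\to A$ for the commutator map, equation~\eqref{abcd} says exactly that $\mu\circ(\mu'\otimes\{-,-\})=\mu\circ(\{-,-\}\otimes\mu')$ as maps $A^{\otimes 4}\to A$. Dualizing (replacing $\mu$ by $\Delta$ and $\{-,-\}$ by $q$, reversing all compositions, and using that $t_2$ is self-dual so that $\mu'$ dualizes to $\Delta'=(1-t_2)\Delta$) turns this into $(\Delta'\otimes q)\circ\Delta=(q\otimes\Delta')\circ\Delta$, which is the claim. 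In the Poisson setting, \eqref{abcd} was obtained by expanding $\{ac,bd\}$ with the Leibniz rule in each of the two slots of the bracket and comparing; here the dual object is the single map $(\Delta\otimes\Delta)\circ q\colon C\to C^{\otimes 4}$, which I would compute in two ways using the co-Leibniz rule.

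Concretely, I would first record the two equivalent forms of the co-Leibniz rule to be used, namely $(\Delta\otimes 1)q=(1\otimes q)\Delta-t_3^2(q\otimes 1)\Delta$ from Definition~\ref{defn-copoisson-coalg} and $(1\otimes\Delta)q=(q\otimes 1)\Delta-t_3(1\otimes q)\Delta$ from \eqref{co-Leibnitz-1}, together with the skew-symmetry $t_2q=-q$. For the first expansion, write $(\Delta\otimes\Delta)q=(1\otimes 1\otimes\Delta)(\Delta\otimes 1)q$, apply the first co-Leibniz identity to $(\Delta\otimes 1)q$, and then, after the trailing comultiplication, apply the appropriate co-Leibniz identity to each of the two resulting terms; this produces four elementary tensors expressed through $\Delta^{(2)}(c)=c_1\otimes c_2\otimes c_3$ and $q$. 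For the second expansion, do the same beginning from $(\Delta\otimes\Delta)q=(\Delta\otimes 1\otimes 1)(1\otimes\Delta)q$, i.e.\ comultiplying the other output of $q$ first, again obtaining four terms. Two of the four terms in each list coincide (these correspond to the summands $a\{c,b\}d$ and $b\{a,d\}c$ that cancel in the Poisson computation), and discarding them leaves an equality between the two remaining pairs of tensors. Using coassociativity to relabel and $t_2q=-q$ to reorder, one then identifies the difference of the two surviving tensors attached to a common $q$-factor as $(q\otimes\Delta')\circ\Delta$, and the complementary difference as $(\Delta'\otimes q)\circ\Delta$, which is the assertion.

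The only real difficulty is bookkeeping: controlling the cyclic permutations $t_3,t_3^2$ produced by iterating the co-Leibniz rule (and the transposition hidden in $\Delta'$), and checking that after the cancellation the four surviving tensors genuinely reassemble, via coassociativity and the skew-symmetry of $q$, into $(q\otimes\Delta')\circ\Delta$ and $(\Delta'\otimes q)\circ\Delta$. Exactly as the proof of \eqref{abcd} used only the Leibniz rule, this proof uses only the co-Leibniz rule and the skew-symmetry of $q$; the co-Jacobi identity plays no role. Two quick consistency checks: both sides vanish when $C$ is cocommutative, and both equal $(\Delta'\otimes\Delta')\circ\Delta$ when $q=\Delta'$; and one may dualize the Poisson computation term by term to confirm the pattern of cancellations.
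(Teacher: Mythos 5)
Your proposal is correct and is essentially the paper's own argument: the paper's proof consists precisely of computing $(\Delta\otimes\Delta)q(a)$ in the two orders $(\Delta\otimes 1\otimes 1)(1\otimes\Delta)q(a)=(1\otimes 1\otimes\Delta)(\Delta\otimes\ 1)q(a)$ via the co-Leibniz rule and comparing. The bookkeeping you describe (two common terms cancel, and the surviving terms reassemble via coassociativity and the skew-symmetry of $q$) fills in exactly what the paper leaves implicit.
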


\begin{proof} It follows by using the co-Leibniz rule to calculate
$$(\Delta \otimes \Delta)q(a)=(\Delta \otimes 1 \otimes 1)(1 \otimes \Delta) q(a)=(1 \otimes 1 \otimes \Delta)(\Delta \otimes 1) q(a).$$
\end{proof}

The dual form of \eqref{abcde} for co-Poisson coalgebras is the following.
\begin{cor}
Let $(C, \Delta, \varepsilon)$ be a co-Poisson coalgebra with
co-Poisson structure $q$. Then
$$(\Delta' \otimes (1 \otimes q) \Delta) \circ \Delta=(q \otimes (1 \otimes \Delta') \Delta) \circ \Delta,$$
i.e, for any $a \in C$,
\begin{align*}&\,\,\,\,\,\,\,\,a_1 \otimes a_2 \otimes a_3 \otimes q(a_4)-a_2 \otimes a_1 \otimes a_3 \otimes q(a_4) \\
&=q(a_1) \otimes a_2 \otimes a_3 \otimes a_4 -q(a_1) \otimes a_2
\otimes a_4 \otimes a_3.\end{align*}
\end{cor}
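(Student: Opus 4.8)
The plan is to mimic the derivation of \eqref{abcde} from \eqref{abcd} in the Poisson-algebra case, but carried out entirely on the coalgebra side, using Proposition \ref{dual-of-abcd} as the starting point exactly as \eqref{abcd} was the starting point there. Recall that in the proof of Proposition \ref{non-comm-poi}, equation \eqref{abcde} was obtained from \eqref{abcd} by substituting $ec$ for $c$ and then applying the Leibniz rule to expand $\{ec,d\}$ and $[ec,d]$. Dualizing, the operation ``multiply the third tensor factor on the left by $e$'' becomes ``apply $\Delta$ in the slot just before the co-Poisson-bracket slot and then relabel'', and the Leibniz rule becomes the co-Leibniz rule of Definition \ref{defn-copoisson-coalg}(2). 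So the statement to be proved is precisely the image of \eqref{abcde} under this dictionary, and the natural route is: start from the identity of Proposition \ref{dual-of-abcd},
$$(q \otimes \Delta')\circ \Delta =(\Delta' \otimes q)\circ\Delta,$$
and apply a further comultiplication in the appropriate tensor slot on each side.

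Concretely, first I would apply $1 \otimes 1 \otimes \Delta \otimes 1$ (or rather the comultiplication that splits the factor carrying the first tensor component of $q$) to both sides of the Proposition \ref{dual-of-abcd} identity — so that on the left-hand side $\Delta'$ gets expanded into its two terms and the slot adjacent to $q$ acquires an extra comultiplication — and then use the co-Leibniz rule \eqref{co-Leibnitz-1}, in the form $(1 \otimes \Delta)q=(q \otimes 1) \Delta -t_3 (1 \otimes q) \Delta$, to rewrite the resulting $(1\otimes\Delta)q$-type subexpression. On the right-hand side one similarly expands $\Delta'$ and applies $\Delta$ to introduce the $(1\otimes q)\Delta$ pattern appearing in the claim. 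After matching the sigma notation — using the conventions $q(a)=a_{(1)}\otimes a_{(2)}$ and $(\Delta\otimes 1)q$, $(1\otimes\Delta)q$ rewritten via co-Leibniz — both sides should collapse to the four-term identity displayed in the corollary. An alternative, and probably cleaner, organizing principle is to compute a single five-fold object $(\Delta\otimes\Delta\otimes 1)\,(\,\cdot\,)$ or $(\Delta^{(2)}\otimes\Delta)q(a)$ in two different ways using associativity of $\Delta$ together with the co-Leibniz rule, exactly as the proof of Proposition \ref{dual-of-abcd} computed $(\Delta\otimes\Delta)q(a)$ in two ways; the asserted equality of the two expansions is then the corollary, after discarding the terms that are symmetric and hence cancel when one passes to $\Delta'$.

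I expect the main obstacle to be purely bookkeeping: keeping track of which of the (up to) five tensor slots carries $a_{(1)},a_{(2)}$ versus the Sweedler factors $a_1,\dots$, and making sure that the two applications of the co-Leibniz rule are made in the correct adjacent slots and in the correct order, so that the spurious terms cancel rather than proliferate. Cocommutativity is \emph{not} assumed here (unlike in Remark's \eqref{co-Leibnitz-2}), so I must be careful to use only the general co-Leibniz identity \eqref{co-Leibnitz-1} and the skew-symmetry $(1+t_2)q=0$, and not slip in a cocommutativity move. The skew-symmetry of $q$ is what converts the $(1\otimes q)$ appearing after one co-Leibniz application into the $-q(a_1)\otimes a_2\otimes a_4\otimes a_3$ term with its sign, via the identity $(1 \otimes q)q(c) = c_{(3)} \otimes c_{(2)} \otimes c_{(1)}$ recorded after Definition \ref{defn-copoisson-coalg}; lining that sign up correctly is the one place where a slip would give the wrong statement rather than just a messier proof.
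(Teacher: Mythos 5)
Your overall strategy is the right one, and it is what the paper intends (the corollary is stated without proof, as the dual of the passage from \eqref{abcd} to \eqref{abcde}): apply one extra comultiplication to a tensor slot of the identity of Proposition \ref{dual-of-abcd} and expand with the co-Leibniz rule. Carried out with $1\otimes 1\otimes\Delta\otimes 1$, the left side becomes $\Delta'(a_1)\otimes(\Delta\otimes 1)q(a_2)$, which the co-Leibniz rule of Definition \ref{defn-copoisson-coalg}(2) turns into $\Delta'(a_1)\otimes a_2\otimes q(a_3)-\Delta'(a_1)\otimes a_{2(2)}\otimes a_3\otimes a_{2(1)}$, while the right side becomes $q(a_1)\otimes a_2\otimes a_3\otimes a_4-q(a_1)\otimes a_3\otimes a_4\otimes a_2$. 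Comparing with the asserted four-term identity, what remains to be shown is
\begin{equation*}
\Delta'(a_1)\otimes a_{2(2)}\otimes a_3\otimes a_{2(1)}
= q(a_1)\otimes a_3\otimes a_4\otimes a_2 - q(a_1)\otimes a_2\otimes a_4\otimes a_3 .
\end{equation*}
This is the one place where your sketch misidentifies the mechanism: these leftover terms are \emph{not} removed by the skew-symmetry of $q$ (no amount of sign-juggling on $q$'s output moves $q$ from the last pair of slots to the first pair). What disposes of them is a \emph{second} application of Proposition \ref{dual-of-abcd}, tensored with one more Sweedler leg, namely $\Delta'(a_1)\otimes q(a_2)\otimes a_3=q(a_1)\otimes\Delta'(a_2)\otimes a_3$, composed with the cyclic permutation $1\otimes 1\otimes t_3^2$ of the last three factors. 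This is exactly the dual of the second use of \eqref{abcd} (right-multiplied by $c$) in the derivation of \eqref{abcde}, so it is implicit in your ``dictionary,'' but your explicit description of the computation omits it and attributes the final matching to skew-symmetry, which would leave you stuck if followed literally.

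With that ingredient supplied, the plan goes through, and your alternative suggestion (expanding a five-fold coproduct of $q(a)$ in two ways, as in the paper's proof of Proposition \ref{dual-of-abcd}) is also viable. One further bookkeeping correction: splitting the slot carrying the \emph{first} component of $q$ produces a $(\Delta\otimes 1)q$ pattern, which is rewritten by the co-Leibniz rule in the form of Definition \ref{defn-copoisson-coalg}(2); the variant \eqref{co-Leibnitz-1}, $(1\otimes\Delta)q=(q\otimes 1)\Delta-t_3(1\otimes q)\Delta$, is the right tool only if you instead comultiply the slot carrying the \emph{second} component of $q$. Either choice works (the two forms are equivalent via skew-symmetry), but as written your proposal pairs the wrong slot with the wrong form.
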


Poisson and co-Poisson structures on 4-dimensional Sweedler Hopf algebra are clear.

\begin{ex}
Let $H_4= k \langle x, g \mid x^2=0, g^2=1, xg=-gx \rangle $ be the
4-dimensional Sweedler Hopf algebra, where the coalgebra structure
is given by $\Delta(g)=g \otimes g$, $\Delta(x)=x \otimes 1 + g
\otimes x$, $\varepsilon(g)=1$, $\varepsilon(x)=0$; and the antipode
is given by  $S(g)=g=g^{-1}$, $S(x)=-gx$. Assume $\charac k \neq
2$.
\begin{enumerate}
\item Any Poisson structure on $H_4$ is given by $\{g,x\}=\lambda x + \mu gx$ for some $\lambda, \mu \in k$.
\item There is no nontrivial Poisson Hopf structure on $H_4$. In
fact, if the Poisson structure given by $\{g,x\}=\lambda x + \mu gx$
is a Poisson Hopf structure, then by applying \eqref{poisson-hopf}
with $a=g, b=x$ and $a=x ,b=gx$ we get $\lambda=\mu=0$.

\item Any co-Poisson structure $q$ on $H_4$ is given by
\begin{align*}
q(1)&=q(g)=0,\\
q(x)&= \alpha ( 1 \otimes x - x \otimes 1 + x \otimes g - g \otimes x),\\
q(gx)&=\beta(1 \otimes gx -gx \otimes 1 +  gx \otimes g - g \otimes
gx)
\end{align*}
 for some $\alpha, \beta \in k$. In fact, for any $h \in H_4$, we may assume
\begin{align*}
q(h)&= \alpha_1 (1 \otimes x - x \otimes 1) + \alpha_2 (1 \otimes g -g \otimes 1)
+ \alpha_3 (1 \otimes gx -gx \otimes 1) \\
  & + \alpha_4 (x \otimes g - g \otimes x) +\alpha_5 (x \otimes gx
-gx \otimes x) + \alpha_6 (g \otimes gx -gx \otimes g)
\end{align*}
as $q(h)$ is skew-symmetric. It follows from $(\varepsilon \otimes
1) q=0$ (Lemma \ref{co-poisson-fact}) that
\[ \alpha_1=\alpha_4, \alpha_2=0, \alpha_3=-\alpha_6.\]
Then, the co-Leibniz rule implies that $q(1)=q(g)=0$,
\[q(x)= \alpha ( 1 \otimes x - x \otimes 1 + x \otimes g - g \otimes
x),\] \[q(gx)=\beta(1 \otimes gx -gx \otimes 1 +  gx \otimes g - g
\otimes gx)\] for some $\alpha, \beta \in k$.
\item There is no nontrivial co-Poisson Hopf structure on
$H_4$. In fact, suppose a co-Poisson structure $q$ as given in (3)
is a co-Poisson Hopf structure on $H_4$, then $\alpha=\beta=0$ by
the equations
$$q(gx)=q(g) \Delta (x)+ \Delta(g) q(x)\, \textrm{ and }\, 0=q(x^2)= q(x)
\Delta (x) + \Delta(x) q(x).$$
\end{enumerate}
\end{ex}


\section{Dual properties between Poisson and co-Poisson Hopf algebras}

As the vector space dual of any coalgebra is an algebra, the dual of
any co-Poisson coalgebra is a Poisson algebra.
\begin{prop}\label{dual-of-co-Poisson}
\noindent
\begin{enumerate}
\item Suppose $(C, \Delta, \varepsilon)$ is a coalgebra, $q: C \to C \otimes C$
is a linear map. Then $(C, q)$ is a co-Poisson coalgebra if and only
if $(C^*, q^*)$ is a Poisson algebra.
\item Suppose $\sigma: C \to D$ is a linear map between co-Poisson coalgebras. Then $\sigma: C \to D$ is a
co-Poisson coalgebra morphism  if and only if $\sigma^*: D^* \to
C^*$ is a Poisson algebra morphism.
\end{enumerate}
\end{prop}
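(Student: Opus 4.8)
The plan is to realize $C^*$ as a $k$-algebra under the convolution product $(f\cdot g)(c)=\sum f(c_1)g(c_2)$ with unit $\varepsilon$, and to equip it with the bracket
\[\{f,g\}:=(f\otimes g)\circ q\colon C\to k,\]
which is exactly the transpose $q^*\colon (C\otimes C)^*\to C^*$ restricted along the canonical inclusion $C^*\otimes C^*\hookrightarrow (C\otimes C)^*$. The whole argument rests on one elementary fact: if $\{e_\alpha\}$ is a basis of a vector space $V$, then the decomposable functionals $e_{\alpha_1}^*\otimes\cdots\otimes e_{\alpha_n}^*\in (V^*)^{\otimes n}\subseteq (V^{\otimes n})^*$ are precisely the coordinate functionals for the basis $\{e_{\alpha_1}\otimes\cdots\otimes e_{\alpha_n}\}$ of $V^{\otimes n}$, so they separate the points of $V^{\otimes n}$. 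Consequently a linear map $\phi\colon C\to C^{\otimes n}$ vanishes if and only if $(f_1\otimes\cdots\otimes f_n)\circ\phi=0$ for all $f_i\in C^*$. This separation principle, rather than a naive termwise dualization, is what carries each defining identity of a co-Poisson coalgebra to the corresponding identity of a Poisson algebra and back — it is needed because $C^*\otimes C^*\to (C\otimes C)^*$ fails to be onto when $\dim_k C=\infty$.

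I would then record, for all $f,g,h\in C^*$ and $c\in C$, the translations
\begin{align*}
\{g,f\}(c)&=(f\otimes g)\bigl(t_2\,q(c)\bigr),\\
\{\{f,g\},h\}(c)&=(f\otimes g\otimes h)\bigl((q\otimes 1)\,q(c)\bigr),\\
\{fg,h\}(c)&=(f\otimes g\otimes h)\bigl((\Delta\otimes 1)\,q(c)\bigr),\\
(f\{g,h\})(c)&=(f\otimes g\otimes h)\bigl((1\otimes q)\,\Delta(c)\bigr),\\
(\{f,h\}g)(c)&=(f\otimes g\otimes h)\bigl((1\otimes t_2)(q\otimes 1)\,\Delta(c)\bigr),
\end{align*}
together with $\{\{g,h\},f\}(c)=(f\otimes g\otimes h)\bigl(t_3(q\otimes 1)q(c)\bigr)$ and $\{\{h,f\},g\}(c)=(f\otimes g\otimes h)\bigl(t_3^2(q\otimes 1)q(c)\bigr)$. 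By the separation principle: the first line shows that skew-symmetry $(1+t_2)\circ q=0$ is equivalent to anti-symmetry $\{g,f\}=-\{f,g\}$; summing the second line with its two cyclic companions shows that $\circlearrowleft\circ(q\otimes 1)\circ q=0$ is equivalent to the Jacobi identity; and, once one notes that skew-symmetry of $q$ (available on whichever side one starts from) forces $(1\otimes t_2)(q\otimes 1)\Delta=-t_3^2(q\otimes 1)\Delta$, the last three lines show that the co-Leibniz rule $(\Delta\otimes 1)q=(1\otimes q)\Delta-t_3^2(q\otimes 1)\Delta$ is equivalent to $\{fg,h\}=f\{g,h\}+\{f,h\}g$. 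Since associativity and unitality of the convolution product are the transposes of coassociativity and the counit axiom — already in force, as $C$ is a coalgebra, so that $(C^*,\text{conv})$ is automatically an associative unital algebra — assembling these equivalences proves (1).

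For (2), when $\sigma\colon C\to D$ is a coalgebra morphism, $\sigma^*\colon D^*\to C^*$ is automatically an algebra morphism, so only compatibility of the brackets is at issue. For $f,g\in D^*$ and $c\in C$ one computes $\sigma^*(\{f,g\}_D)(c)=(f\otimes g)\bigl(q_D(\sigma(c))\bigr)$ and $\{\sigma^* f,\sigma^* g\}_C(c)=(f\otimes g)\bigl((\sigma\otimes\sigma)\,q_C(c)\bigr)$; the separation principle on $D\otimes D$ then gives that these coincide for all $f,g,c$ precisely when $q_D\circ\sigma=(\sigma\otimes\sigma)\circ q_C$, i.e. precisely when $\sigma$ is a co-Poisson coalgebra morphism. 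The one genuinely delicate point — hence the step I expect to require the most care — is exactly the non-surjectivity of $C^*\otimes C^*\to (C\otimes C)^*$: one must route every implication through point-separation by decomposable functionals and keep careful track of how skew-symmetry of $q$ interacts with the asymmetric shape of the co-Leibniz rule.
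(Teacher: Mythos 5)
Your proposal is correct and follows essentially the same route as the paper: each co-Poisson axiom is paired against functionals $f\otimes g$ (resp.\ $f\otimes g\otimes h$), and the equivalences follow because decomposable functionals separate points of $C\otimes C$ and $C\otimes C\otimes C$ — the separation principle you spell out is exactly what the paper's ``it is easy to see/it follows'' steps use implicitly, as is the use of skew-symmetry to match $\{f,h\}g$ with the $-t_3^2(q\otimes 1)\Delta$ term. Part (2) likewise matches the paper's argument (your displayed computation is in fact the corrected form of the paper's formula, which has a typo writing $\Delta$ where $q_C,q_D$ are meant).
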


\begin{proof} (1) Note that $q^*$ is the map
$q^*: C^* \otimes C^* \subseteq (C \otimes C)^* \to C^*,  f \otimes
g \mapsto \{f, g\}$, where $\{f, g\} \in C^*$ is the map $C \to k,
\, c \mapsto (f \otimes g)q(c)=f(c_{(1)})g(c_{(2)})$.

Note that $(\{f, g\} + \{g, f\})(c)= (f \otimes g)(1 + t_2)q(c)$. It is easy to see that $\{-,
-\}_{C^*}: C^* \times C^* \to C^*$ is skew-symmetric if and only if
$q$ is skew-symmetric.

Since $\{\{f,g\},h\}(c)=(f \otimes g \otimes h)(q \otimes
1)q(c)$, 
then
$$\circlearrowleft \{\{f,g\},h\}(c)= (f \otimes g \otimes
h)(1+t_3+t_3^2)(q \otimes 1)q(c).$$
It follows that $\{-, -\}_{C^*}$ satisfies the Jacobi identity if and
only if $q$ satisfies the co-Jacobi identity.

Similarly, since $\{f*g,h\}(c) =(f \otimes g \otimes h)\left((\Delta
\otimes 1) q(c) \right)$ and
$$((f* \{g,h\}) - (\{h,f\}*g) )(c) = (f \otimes g \otimes h) ((1 \otimes q) \Delta - t_3^2 (q \otimes 1)
\Delta)(c),$$ then $\{f*g,h\}=f* \{g,h\} + \{ f,h\} *g $ if and only
if $(\Delta \otimes 1) q = (1 \otimes q) \Delta-t_3^2 (q \otimes 1)
\Delta$.

(2) It is well known that $\sigma: C \to D$ is a coalgebra morphism
if and only if $\sigma^*: D^* \to C^*$ is an algebra morphism. Since
$$(\sigma^*  \{f,g\}_{D^*}  - \{\sigma^*f, \sigma^*g\}_{C^*})(c)=
(f \otimes g)(\Delta_D \sigma - (\sigma \otimes
\sigma)\Delta_C)(c),$$ then $\sigma^* \{-,-\}_{D^*}= \{-,-\}_{C^*}
(\sigma^* \otimes \sigma^*)$ if and only if $\Delta_D \sigma =
(\sigma \otimes \sigma)\Delta_C$. The proof is finished.
\end{proof}

If $A$ is an algebra, then $A^\circ =\{f \in A^* \mid \ker f \,
\textrm{contains a cofinite ideal of}\, A \}$ is a coalgebra, which
is called the finite dual of $A$. Suppose $(A, p=\{-,-\})$ is a
Poisson algebra. Example \ref{non-noetherian-ex} shows that
$(A^\circ, q=p^*)$ may not be a co-Poisson coalgebra because $p^*:
A^* \to (A \otimes A)^*$ may not be restricted to a map $p^*_{A^\circ}:
A^\circ \to A^\circ \otimes A^\circ$.

\begin{ex} \label{non-noetherian-ex}
Let $A=k[x_1, x_2, \cdots, x_n, \cdots]$ be a polynomial algebra
with infinitely many variables $\{x_i \mid i \geq 1\}$. Let $p(x_i \otimes x_j)=\{x_i,
x_j\}=1$ for all $i < j$. Then $p$ gives a Poisson algebra structure
on $A$. Suppose $\varepsilon: A \to k, x_i \mapsto 0$, is the
augmentation map. Then $\varepsilon \in A^{\circ}$, but
$p^*(\varepsilon) = \varepsilon p\notin (A \otimes A)^\circ$.

In fact, if $\varepsilon p\in (A \otimes A)^\circ$, then there is a
cofinite ideal $I$ of $A$ such that $I \otimes A + A \otimes I \subseteq
\ker (\varepsilon p)$. Then, $\{I, A\}\subseteq \ker \varepsilon$.
Since $A/I$ is finite-dimensional, there exists a nonzero linear
polynomial in $I$, which will imply that $1 \in \{I, A\}$. It
contradicts to $\{I, A\}\subseteq \ker \varepsilon$.
\end{ex}

%
%

But if $A$ is left or right noetherian, then we
have the following positive conclusion.

\begin{prop}  \label{A-o-copoisson} Let $(A, p=\{-,-\})$ be a Poisson algebra, and $f: A \to B$ be a Poisson algebra morphism.
\begin{enumerate}
\item If $A$ is a (left or right) noetherian algebra, then $(A^\circ, q=p^*)$ is a co-Poisson
coalgebra.
\item If both $A$ and $B$ are (left or right) noetherian, then $f^*:
B^\circ \to A^\circ$ is a co-Poisson coalgebra morphism.
\end{enumerate}
\end{prop}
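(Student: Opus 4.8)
The plan is to deduce everything from two facts: that the noetherian hypothesis forces the dual map $q:=p^{*}$ to restrict to an honest linear map $A^{\circ}\to A^{\circ}\otimes A^{\circ}$, and that, once this is known, the co-Poisson coalgebra axioms for $(A^{\circ},q)$ are just the Poisson algebra axioms for $(A,\{-,-\})$ read through the evaluation pairing, exactly as in the proof of Proposition \ref{dual-of-co-Poisson}. The only place where noetherianity is essential is the first point; it is precisely what goes wrong in Example \ref{non-noetherian-ex}.

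For the first point, fix $f\in A^{\circ}$ and a cofinite ideal $I$ of $A$ with $I\subseteq\ker f$. I claim $p^{*}(f)=f\circ p$ vanishes on the ideal $J:=I^{2}\otimes A+A\otimes I^{2}$ of $A\otimes A$. Since $\{-,-\}$ is a derivation in each variable, for $i,i'\in I$ and $a\in A$ one has $\{ii',a\}=i\{i',a\}+\{i,a\}i'\in I$ and $\{a,ii'\}=i\{a,i'\}+\{a,i\}i'\in I$, so $p(I^{2}\otimes A)\subseteq I\subseteq\ker f$ and $p(A\otimes I^{2})\subseteq I\subseteq\ker f$, whence $J\subseteq\ker(f\circ p)$. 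Here the noetherian hypothesis enters: $I$ is finitely generated as an ideal, and since $A/I$ is finite-dimensional this forces $I/I^{2}$ to be finite-dimensional, so $I^{2}$ is again a cofinite ideal, and $J$ is a cofinite ideal of $A\otimes A$ with $(A\otimes A)/J\cong A/I^{2}\otimes A/I^{2}$. Therefore $f\circ p$ factors through the finite-dimensional space $A/I^{2}\otimes A/I^{2}$, and the corresponding functional lies in $(A/I^{2})^{*}\otimes(A/I^{2})^{*}\subseteq A^{\circ}\otimes A^{\circ}$. Thus $q:=p^{*}|_{A^{\circ}}\colon A^{\circ}\to A^{\circ}\otimes A^{\circ}$ is well defined.

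For the second point, the evaluation pairing $A^{\circ}\times A\to k$ induces pairings $(A^{\circ})^{\otimes n}\times A^{\otimes n}\to k$ that are nondegenerate on the first factor (an element of $(A^{\circ})^{\otimes n}\subseteq(A^{\otimes n})^{*}$ annihilating $A^{\otimes n}$ is zero). Unwinding the definitions of $q=p^{*}$ and of the comultiplication $\Delta_{A^{\circ}}=\mu_{A}^{*}$ of $A^{\circ}$, one checks term by term — just as in the proof of Proposition \ref{dual-of-co-Poisson} — that, against this pairing, skew-symmetry of $q$ corresponds to skew-symmetry of $\{-,-\}$, the co-Jacobi identity for $q$ to the Jacobi identity, and the co-Leibniz rule for $q$ to the Leibniz rule of $\{-,-\}$. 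Since $(A,\{-,-\})$ is a Poisson algebra, all three hold, so $(A^{\circ},q)$ is a co-Poisson coalgebra. (Alternatively, $(A^{\circ})^{*}$ is the completion $\varprojlim_{I}A/I$ of $A$, the relations $\{I^{2},A\}+\{A,I^{2}\}\subseteq I$ proved above make $\{-,-\}$ extend continuously to it, and Proposition \ref{dual-of-co-Poisson} applies directly to the coalgebra $A^{\circ}$.)

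For (2), since $f\colon A\to B$ is an algebra morphism, $f^{*}$ restricts to a coalgebra morphism $B^{\circ}\to A^{\circ}$ (the preimage under $f$ of a cofinite ideal of $B$ is a cofinite ideal of $A$), and by part (1), applied to $A$ and to $B$, both $q_{A}$ and $q_{B}$ are defined. It remains to verify $(f^{*}\otimes f^{*})\circ q_{B}=q_{A}\circ f^{*}$, which again follows by pairing against $A\otimes A$: for $g\in B^{\circ}$ and $a,b\in A$, both sides evaluate on $a\otimes b$ to $g\big(\{f(a),f(b)\}_{B}\big)=g\big(f(\{a,b\}_{A})\big)$ because $f$ is a Poisson morphism, and left-nondegeneracy of the pairing gives the equality. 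The main obstacle throughout is the first point — producing a cofinite ideal of $A\otimes A$ inside $\ker(f\circ p)$ — and the $I^{2}$-construction, which uses that $I$ is finitely generated, is exactly the ingredient absent in the non-noetherian Example \ref{non-noetherian-ex}.
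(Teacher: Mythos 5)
Your proof is correct and follows essentially the same route as the paper: noetherianity makes $I^2$ cofinite, so $J=I^2\otimes A+A\otimes I^2$ is a cofinite ideal of $A\otimes A$ annihilated by $f\circ p$ via the Leibniz rule, which yields the restriction $p^*|_{A^\circ}\colon A^\circ\to A^\circ\otimes A^\circ$, and the remaining axiom checks by pairing are exactly the dualization the paper delegates to Proposition \ref{dual-of-co-Poisson}. You merely spell out some steps (the Leibniz computation $p(I^2\otimes A+A\otimes I^2)\subseteq I$ and the verification of part (2)) that the paper leaves implicit.
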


\begin{proof} 
It suffices to show that $p^* : A^* \to (A \otimes A)^*$ restricts
to a map $$p^*|_{A^\circ}: A^\circ \to A^\circ \otimes A^\circ \cong
(A \otimes A)^\circ.$$

Suppose $f \in A^\circ$ and $I \subseteq \ker f$ is a cofinite ideal of
$A$. Since $A$ is left or right noetherian, then $I/ I^2$ is a finitely
generated left or right $A/I$-module.
Hence $\dim_k I/ I^2 < \infty$ as $\dim_k A/I < \infty$. So, $I^2$ is also a cofinite ideal of
$A$. Let $J=I^2 \otimes A + A \otimes I^2$. Then $J$ is a cofinite
ideal of $A \otimes A$, and
\[p^* (f) (J)=f  p (J) \subseteq f(I)=\{0\}.\]
It follows that $p^*(f) \in (A \otimes A)^\circ$.
\end{proof}

Note that $A^\circ = 0$ for the Weyl algebra $A=A_n(k)$. So, even in noetherian case $(A, p)$ may not be a Poisson algebra
when $(A^\circ, q=p^*)$ is a co-Poisson
coalgebra.

In \cite{OP}, the authors prove that the Hopf dual $H^\circ$ of a
co-Poisson Hopf algebra $H$ is a Poisson Hopf algebra when $H$ is an
almost normalizing extension over $k$. In fact, this is true in general
as stated in \cite[Proposition 3.1.5]{KS}. We give a proof here. Oh
gives a proof also in a recent paper (\cite[Theorem 2.2]{Oh}).

\begin{prop}
Let  $(H, \mu , \eta, \Delta ,\varepsilon , S, q)$ be a co-Poisson Hopf algebra. Then the Hopf dual $H^\circ$
is a Poisson Hopf algebra.
\end{prop}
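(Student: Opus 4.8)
The plan is to mimic the argument of Proposition~\ref{A-o-copoisson}(1) but now tracking all the Hopf-algebraic structure simultaneously. By Proposition~\ref{A-o-copoisson}(1), the underlying coalgebra $(H^\circ, q^*)$ is already known to be a co-Poisson coalgebra \emph{once we know $q^*$ restricts to $H^\circ$} --- but wait, $H$ is a co-Poisson Hopf algebra, not a noetherian Poisson algebra, so that proposition does not apply directly. Instead I would start from the well-known fact that for \emph{any} Hopf algebra $H$, the Hopf dual $H^\circ$ is again a Hopf algebra, with multiplication $\Delta^*$, comultiplication $\mu^*$ (suitably corestricted), unit $\varepsilon^*$, counit $\eta^*$, and antipode $S^*$. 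So the only thing to produce is the bracket: define $\{f,g\} := q^*(f\otimes g) = (f\otimes g)\circ q$ for $f,g\in H^\circ$, and then verify (a) that $\{f,g\}$ again lies in $H^\circ$, (b) that $(H^\circ,\{-,-\})$ is a Poisson algebra, and (c) the compatibility condition \eqref{poisson-hopf} between $\{-,-\}$ and $\Delta_{H^\circ}=\mu^*$.

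For step (a) --- the containment $\{f,g\}\in H^\circ$ --- I would argue directly: if $f,g\in H^\circ$ vanish on cofinite ideals $I,K$ of $H$, pick a cofinite ideal $L$ contained in $I\cap K$ (e.g. $L=IK$, or $I\cap K$ itself, which is cofinite). The key point is that $q$ is a $\Delta$-derivation by \eqref{q(ab)}, so for an ideal $L$ one has $q(L)=q(L\cdot 1)\subseteq q(L)\Delta(1)+\Delta(L)q(1)$; more usefully, iterating \eqref{q(ab)} shows $q(L^n)\subseteq \sum_i \Delta(L^{a_i})\, q(H)\, \Delta(L^{b_i})$ with $a_i+b_i=n-1$, and since $\Delta(L)\subseteq L\otimes H + H\otimes L$ one gets, for $n$ large enough relative to $\dim H/L$, that $q(L^n)\subseteq L'\otimes H + H\otimes L'$ for a suitable cofinite ideal $L'$ --- then $(f\otimes g)q$ vanishes on a cofinite ideal. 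Alternatively, and more cleanly, one can transport the fact across the known isomorphism: $q^*$ restricted to $H^\circ\otimes H^\circ$ is exactly the Poisson bracket coming from the co-Poisson coalgebra structure via Proposition~\ref{dual-of-co-Poisson}, and one only needs $q^*(H^\circ\otimes H^\circ)\subseteq H^\circ$, which is where noetherianity was used before but here is replaced by the $\Delta$-derivation property. I expect this closure argument to be the main obstacle, since it is the one place the Hopf structure genuinely helps and one must be careful about which ideal to take.

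For step (b) --- that $(H^\circ,\{-,-\})$ is a Poisson algebra --- I would invoke Proposition~\ref{dual-of-co-Poisson}(1): since $(H,q)$ is a co-Poisson coalgebra, $(H^*, q^*)$ is a Poisson algebra, and $H^\circ$ is a subalgebra of $H^*$ closed under $q^*$ by step (a), hence inherits the Poisson algebra structure. No computation is needed here beyond citing that proposition. For step (c) --- the compatibility \eqref{poisson-hopf} with $\Delta_{H^\circ}$ --- I would dualize the $\Delta$-derivation axiom \eqref{q(ab)}. Writing $\Delta_{H^\circ}=\mu^*$, for $f,g\in H^\circ$ and $h\in H$ one computes $\Delta_{H^\circ}(\{f,g\})$ evaluated against $a\otimes b\in H\otimes H$, which equals $\{f,g\}(ab)=(f\otimes g)q(ab)=(f\otimes g)\big(q(a)\Delta(b)+\Delta(a)q(b)\big)$ by \eqref{q(ab)}; expanding with the sigma notation and matching terms gives exactly $\sum \{f_1,g_1\}(a)\,(f_2 g_2)(b) + \sum (f_1 g_1)(a)\,\{f_2,g_2\}(b)$, i.e. \eqref{poisson-hopf} for $H^\circ$. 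This is a routine unravelling of Sweedler notation and I would not write it out in full.

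Thus the proof assembles as: (1) recall $H^\circ$ is a Hopf algebra; (2) define $\{f,g\}=(f\otimes g)\circ q$ and show closure of $H^\circ$ under this bracket using that $q$ is a $\Delta$-derivation; (3) conclude $(H^\circ,\{-,-\})$ is a Poisson algebra from Proposition~\ref{dual-of-co-Poisson}(1); (4) verify \eqref{poisson-hopf} by dualizing \eqref{q(ab)}. The genuinely new ingredient over Proposition~\ref{A-o-copoisson} is step (2), where the algebra being noetherian is no longer available and must be replaced by the $\Delta$-derivation property of $q$.
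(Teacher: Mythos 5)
Your overall architecture (recall $H^\circ$ is a Hopf algebra, define $\{f,g\}=(f\otimes g)\circ q$, cite Proposition~\ref{dual-of-co-Poisson}(1) for the Poisson axioms, and dualize \eqref{q(ab)} to get \eqref{poisson-hopf}) is exactly the paper's, and steps (b) and (c) are fine. The gap is in step (2), the closure $\{f,g\}\in H^\circ$, which you rightly identify as the crux but do not actually establish. Your argument rests on the containment $\Delta(L)\subseteq L\otimes H+H\otimes L$ for a cofinite ideal $L$; this says $L$ is a biideal and is false in general. For instance, in $H=k[x]$ with $x$ primitive and $L=(x-1)$, one has $\Delta(x-1)=(x-1)\otimes 1+1\otimes(x-1)+1\otimes 1$, which is not in $L\otimes H+H\otimes L$ (its image in $H/L\otimes H/L\cong k$ is $1\neq 0$), and $L$ contains no cofinite biideal at all in characteristic $0$. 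Moreover, even if some such containment held, your conclusion would have $\{f,g\}$ vanishing on $L^n$ (or on $IK$, which you propose parenthetically), and the cofiniteness of products or powers of cofinite ideals is precisely the point where noetherianity was needed in Proposition~\ref{A-o-copoisson}; invoking it here would smuggle back the hypothesis the Hopf structure is supposed to replace. So as written, the ``for $n$ large enough relative to $\dim H/L$'' step is both unproved and resting on a false lemma.

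The correct replacement, which is the paper's argument, uses no powers of ideals: take $F=f\otimes g\in(H\otimes H)^\circ$ and a cofinite ideal $J\subseteq\ker F$ of $H\otimes H$ (e.g. $J=I\otimes H+H\otimes K$), and set $I':=q^{-1}(J)\cap\Delta^{-1}(J)$. Since $\Delta$ is an algebra map, $\Delta^{-1}(J)$ is an ideal, and the $\Delta$-derivation property \eqref{q(ab)} shows $I'$ is an ideal of $H$: for $a\in I'$ and $b\in H$, $q(ab)=q(a)\Delta(b)+\Delta(a)q(b)\in J$ because $q(a),\Delta(a)\in J$ and $J$ is an ideal (similarly for $ba$). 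Cofiniteness of $I'$ follows from the injection $H/I'\to (H\otimes H)/J\oplus(H\otimes H)/J$, $\overline{h}\mapsto(\overline{q(h)},\overline{\Delta(h)})$, and then $\{f,g\}=q^*(F)$ vanishes on $I'$, so $\{f,g\}\in H^\circ$. This is the one genuinely new ingredient over Proposition~\ref{A-o-copoisson}; with it in place, the rest of your outline goes through as you describe.
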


\begin{proof} It is well-know that $H^\circ$
is a Hopf algebra (\cite[Section 6.2]{Sw}, \cite[Theorem
9.1.3]{Mo}). By Proposition \ref{dual-of-co-Poisson}, to show $H^\circ$ is
a Poisson algebra it suffices to show that $q^* : (H \otimes H)^* \to H^*$ restricts to a map $q^\circ : H^\circ \otimes H^\circ \cong
(H \otimes H)^\circ \to H^\circ,$ that is, $q^*\left( (H \otimes
H)^\circ \right) \subseteq H^\circ$.

Suppose $F \in (H \otimes H)^\circ$ and $J \subseteq \ker F$ is a
cofinite ideal in $H \otimes H$. Since $\Delta$ is an
algebra morphism, $\Delta^{-1}(J)$ is an ideal of $H$. It follows
from $q(ab)=q(a) \Delta(b)+\Delta(a) q(b)$ that $I= q^{-1}(J) \cap
\Delta^{-1} (J)$ is an ideal of $H$. Since the linear map
$$H/I \to \frac{H \otimes H} {J} \oplus \frac{H \otimes H} {J}, \quad
\overline{h} \mapsto \left( \overline{q(h)},
\overline{\Delta(h)}\right)$$ is injective, $I$ is a cofinite ideal
of $H$. Note that $q^*(F) (I) = F(q(I)) \subseteq F(J)=0$, i.e., $I
\subseteq \ker q^*(F)$. It follows that $q^*(F) \in H^\circ$.

To finish the proof, we need to show that $\mu^*: H^\circ \to
H^\circ \otimes H^\circ$ satisfies the compatible condition \eqref{poisson-hopf}, that is, for all $f, g \in H^\circ$,
$$\mu^* (\{f,g\}_{H^\circ}) =\{f_1,g_1\}_{H^\circ} \otimes (f_2 * g_2) + (f_1 * g_1) \otimes \{f_2, g_2\}_{H^\circ}.$$
This is true, because, for any
$x,y \in H$,  on one hand, by \eqref{q(ab)},
\begin{align*}
&\mu^* (\{f,g\}_{H^\circ})(x \otimes y)\\
=&\{f,g\}_{H^\circ}(xy)=(f \otimes g)q(xy)=f((xy)_{(1)}) g((xy)_{(2)})\\
=&f((x_{(1)}y_1) g((x_{(2)}y_2) + f((x_1y_{(1)}) g((x_2y_{(2)}),
\end{align*}
on the other hand,
\begin{align*}
&(\{f_1,g_1\}_{H^\circ} \otimes (f_2 * g_2) + (f_1 * g_1) \otimes \{f_2, g_2\}_{H^\circ})(x \otimes y)\\
=&(\{f_1,g_1\}_{H^\circ} (x) \, (f_2 * g_2)(y) + (f_1 * g_1) (x)  \, \{f_2, g_2\}_{H^\circ}(y)\\
=&f_1(x_{(1)}) g_1(x_{(2)}) f_2(y_1) g_2(y_2) + f_1(x_1) g_1(x_2) f_2(y_{(1)}) g_2(y_{(2)})\\
=&f((x_{(1)}y_1) g((x_{(2)}y_2) + f((x_1y_{(1)}) g((x_2y_{(2)}).
\end{align*}
\end{proof}

The following is also stated in \cite[Proposition 3.1.5]{KS} without noetherian hypothesis. Without this hypothesis, it is not true
as showed in Example \ref{counter-ex}.
\begin{prop} \label{dual-of-coPoisson-Hopf}
Let  $(H, \mu , \eta, \Delta ,\varepsilon , S, p)$ be a left or right noetherian Poisson Hopf algebra. Then the Hopf dual $H^\circ$ is a
co-Poisson Hopf algebra.
\end{prop}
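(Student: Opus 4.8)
The plan is to combine the two ingredients already available in the excerpt: the algebra-side statement that $p^{*}$ restricts to $A^{\circ}$ when $A$ is noetherian (Proposition~\ref{A-o-copoisson}), and the Hopf-structure compatibility computation carried out in the proof of the preceding proposition (the Poisson-Hopf/co-Poisson-Hopf duality is formally self-dual once one has set up the dictionary $\mu\leftrightarrow\Delta$, $q\leftrightarrow p$). First I would invoke that $H^{\circ}$ is a Hopf algebra with $\Delta_{H^{\circ}}=\mu^{*}$, $\mu_{H^{\circ}}=\Delta^{*}$, etc.\ (\cite[Section 6.2]{Sw}, \cite[Theorem 9.1.3]{Mo}). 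Then, since $H$ is noetherian and $p:H\otimes H\to H$ is the Poisson bracket, Proposition~\ref{A-o-copoisson}(1) (applied with $A=H$) gives at once that $q:=p^{*}$ restricts to a linear map $q:H^{\circ}\to H^{\circ}\otimes H^{\circ}\cong(H\otimes H)^{\circ}$ making $(H^{\circ},q)$ a co-Poisson coalgebra; this is exactly the dual of the Poisson-algebra axioms via Proposition~\ref{dual-of-co-Poisson}(1). So clause (1) of Definition~\ref{defn-copoisson-Hopf} is free.

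It then remains to verify clause (2), namely that $q=p^{*}$ is a $\Delta_{H^{\circ}}$-derivation: for all $f,g\in H^{\circ}$,
\begin{equation*}
q(f*g)=q(f)\,\Delta_{H^{\circ}}(g)+\Delta_{H^{\circ}}(f)\,q(g),
\end{equation*}
where $*$ is the convolution product on $H^{\circ}$ (which is $\Delta^{*}$) and $\Delta_{H^{\circ}}=\mu^{*}$. I would prove this by pairing both sides against an arbitrary $x\otimes y\in H\otimes H$ and expanding. On the left, $q(f*g)(x\otimes y)=(f*g)(p(x\otimes y))=(f*g)(\{x,y\})=\sum f(\{x,y\}_{1})g(\{x,y\}_{2})$, and now one substitutes the Poisson-Hopf compatibility \eqref{poisson-hopf} for $\Delta(\{x,y\})$, obtaining
\begin{equation*}
\sum f(\{x_{1},y_{1}\})\,g(x_{2}y_{2})+\sum f(x_{1}y_{1})\,g(\{x_{2},y_{2}\}).
\end{equation*}
On the right, one expands $q(f)=\sum f_{(1)}\otimes f_{(2)}$ and $\Delta_{H^{\circ}}(g)=\sum g_{1}\otimes g_{2}$ (and symmetrically), pairs $q(f)\Delta_{H^{\circ}}(g)$ against $x\otimes y$ to get $\sum f_{(1)}(x_{1})g_{1}(x_{2})\,f_{(2)}(y_{1})g_{2}(y_{2})$, and recognizes $\sum f_{(1)}(x_{1})f_{(2)}(y_{1})=f(\{x_{1},y_{1}\})$ while $\sum g_{1}(x_{2})g_{2}(y_{2})=g(x_{2}y_{2})$; the other summand matches symmetrically. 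Both sides agree, so \eqref{q(ab)} holds in $H^{\circ}$. This is the dual, line for line, of the $\mu^{*}$-computation displayed in the proof of the previous proposition, so I would either mirror it or simply remark that it is obtained from that proof by interchanging the roles of multiplication and comultiplication.

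The only genuine obstacle is the one the noetherian hypothesis is there to overcome, and it is dispatched by Proposition~\ref{A-o-copoisson}: without it, $p^{*}$ need not land in $(H\otimes H)^{\circ}$ at all (Example~\ref{non-noetherian-ex} / Example~\ref{counter-ex}), so there is no co-Poisson map to speak of. Once noetherianity secures the restriction $p^{*}|_{H^{\circ}}$, everything else is a formal diagram chase identical to the co-Poisson $\Rightarrow$ Poisson direction already handled above. One small point to be careful about in writing up is the identification $(H\otimes H)^{\circ}\cong H^{\circ}\otimes H^{\circ}$, which is standard but should be cited (\cite[Section 6.2]{Sw}); it is what lets us read $q(f)$ as an element of $H^{\circ}\otimes H^{\circ}$ and apply the sigma notation in the compatibility check.
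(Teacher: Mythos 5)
Your proposal is correct and matches the paper's own argument essentially line for line: Proposition~\ref{A-o-copoisson} (using the noetherian hypothesis) gives the co-Poisson coalgebra structure $q=p^{*}$ on $H^{\circ}$, and the $\Delta_{H^{\circ}}$-derivation identity \eqref{q(ab)} is checked by evaluating both sides on $x\otimes y$ and invoking the Poisson--Hopf compatibility \eqref{poisson-hopf}, exactly as in the paper. Nothing further is needed.
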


\begin{proof} By Proposition \ref{A-o-copoisson}, $H^\circ$ is a
co-Poisson coalgebra. We only need to check the compatible condition \eqref{q(ab)}, that is, for all $f, g \in H^\circ$,
$$ p^*(f * g) = p^*(f)\mu^*(g)+ \mu^*(f)p^*(g).$$
This is true, because for any $x, y \in H$,
\begin{align*}
  & p^*(f * g)(x \otimes y)=(f * g)(\{x, y\})\\
 =&f(\{x_1, y_1\})  \, g(x_2y_2) +f(x_1y_1) \, g(\{x_2, y_2\}),
\end{align*}
and
\begin{align*}
& (p^*(f)\mu^*(g)+ \mu^*(f)p^*(g)) (x \otimes y)\\
=&(f_{(1)}* g_1 \otimes f_{(2)}* g_2) (x \otimes y) + (f_1 * g_{(1)} \otimes f_2 * g_{(2)}) (x \otimes y)\\
=&f_{(1)}(x_1) g_1(x_2) f_{(2)}(y_1) g_2(y_2) + f_1(x_1) g_{(1)}(x_2) f_2(y_1) g_{(2)}(y_2)\\
 =&f(\{x_1, y_1\})  \, g(x_2y_2) +f(x_1y_1) \, g(\{x_2, y_2\}).
\end{align*}
\end{proof}

\begin{ex} \label{counter-ex}
Let $A=k[x_1,\cdots,x_d,\cdots]$ be a polynomial algebra with
infinitely many variables $\{x_i \mid i \geq 1\}$, which is a Hopf algebra viewed as the
enveloping algebra of the abelian Lie algebra $\mathfrak{g}=kx_1
\oplus kx_2 \oplus \cdots \oplus kx_d \oplus \cdots.$ Let
\begin{align*}
\{x_1,x_i\}&=0 \textrm{ for all } i \in \mathbb{N},\\
\{x_i,x_j\}&= \begin{cases}x_1, \quad &j=i+1,\\
0, & \textrm{otherwise}
\end{cases}
\,\, \textrm{for all }  1 <i <j \in \mathbb {N}.
\end{align*}
Then $\{\{x_i, x_j\}, x_k\}=0$ for all $i,j, k$. As in
\cite[Proposition 1.8]{LPV}, $A$ is endowed with a Poisson algebra structure. It is easy to check by induction on the degree of homogeneous elements
that \eqref{poisson-hopf} holds. So,
$A$ is a Poisson Hopf algebra.

 We assert that $A^\circ$ is not a co-Poisson Hopf algebra
by showing that $ \{-,-\}^* ( A^\circ) \nsubseteq A^\circ \otimes
A^\circ$.
Let $f: A \to k$ be the linear map given by
$$
\begin{cases}
f(1_A)=f(x_1)=1_k,\\
f(a)=0\, \textrm{ for all other monic monomials } a \in A.
\end{cases}
$$
Then $I=  \langle \{ x_1^2,x_2,x_3,\cdots\} \rangle \subseteq \ker
f$. Note that $I$ is a cofinite ideal of $A$, and so, $f \in
A^\circ$. Suppose $\{-.-\}^* (f) \in A^\circ \otimes A^\circ$ and
\[\{-.-\}^* (f)= \sum_{i=1}^n g_i \otimes h_i,\]
with $I_i \in \ker g_i$ and $J_i \in \ker h_i$ are all cofinite
ideals of $A$. Then
\[J= \bigcap_{i=1}^n (I_i \cap J_i).\]
is a cofinite ideal of $A$, and $\{-,-\}^* (f) (J \otimes
A)=\sum_{i=1}^n (g_i \otimes h_i) (J \otimes A)=0$. It follows that
$\{J,A\} \subseteq \ker f$.

Since $J$ is cofinite in $A$, $\{x_2+J, x_3+J, \cdots\}$ is linearly
dependent in $A/J$. Then there exists $i \ge 2$  and $\lambda_2,
\cdots, \lambda_{i-1}, \lambda_i \in K$ such that $\lambda_i\neq 0$
and
\[\lambda_2 x_2 + \lambda_3 x_3 + \cdots +\lambda_i x_i \in J.\] Now
\[\{\lambda_2 x_2 + \lambda_3 x_3 + \cdots +\lambda_i x_i ,x_{i+1} \}= \lambda_i x_1 \notin \ker f,\]
which contradicts to $\{J,A\} \subseteq \ker f$.
\end{ex}

\section{Co-Poisson coalgebra structures on $k[x_1, x_2,\cdots, x_d]$}

Suppose $\mathfrak{g}=kx_1 \oplus kx_2 \oplus \cdots \oplus kx_d$ is
a $d$-dimensional Lie algebra. The co-Poisson Hopf structures on $U(\mathfrak{g})$
are in one-to-one correspondence with the Lie bialgebra structures on $\mathfrak{g}$.
If $\mathfrak{g}$ is non-abelian, then there is no nontrivial Poisson Hopf structure on $U(\mathfrak{g})$  (see Proposition \ref{U-of-non-abel}).
So, we turn to consider the case when $\mathfrak{g}$ is abelian from now on. Then
$A=U(\mathfrak{g})=k[x_1,\cdots, x_d]$. We first characterize
co-Poisson coalgebra structures on $k[x_1, x_2,\cdots, x_d]$ in this section.

Suppose $(C, \Delta, \varepsilon)$ is a coalgebra. Let $P(C)$ be the subspace of $C$ consisting of
all primitive elements of $C$. Assume $P(C)= \oplus_{i \in I} ke_i$
as a vector space, with $(I,<)$ being well
ordered by using Well Ordering Principle. Let
$$\mathcal{I} = \bigoplus_{i,j \in I,i <j} k(e_i \otimes e_j -e_j \otimes e_i).$$


\begin{lem}\label{elements-in-i}
Retain the notations above. Suppose $X \in C \otimes C$. Then $X \in
\mathcal{I}$ if and only if
$$(1+t_2)(X)=0\, \textrm{ and } \, (\Delta \otimes 1) (X) =(1-t_3) (1 \otimes X).$$
\end{lem}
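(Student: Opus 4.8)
The plan is to show that, once one knows $X$ lies in $C^{+}\otimes C^{+}$ (where $C^{+}=\ker\varepsilon$), the displayed co-Leibniz-type identity collapses — thanks to the skew-symmetry of $X$ — to the single condition that every ``left leg'' of $X$ is primitive; the two conditions together then pin $X$ down to $\Lambda^{2}P(C)=\mathcal I$. No cocommutativity of $C$ is needed: skew-symmetry of $X$ plays that role.

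First I would fix the decomposition $C=k1\oplus C^{+}$ (with $1$ the grouplike element relative to which primitivity is taken) and, for $u\in C^{+}$, set $\bar\Delta(u)=\Delta(u)-u\otimes 1-1\otimes u$; applying $\varepsilon\otimes 1$ and $1\otimes\varepsilon$ shows $\bar\Delta(u)\in C^{+}\otimes C^{+}$, and $\bar\Delta(u)=0$ iff $u\in P(C)$. The key computation is then: for any \emph{skew-symmetric} $X=\sum_i u_i\otimes v_i\in C^{+}\otimes C^{+}$ one has $(\Delta\otimes 1)X=\sum_i u_i\otimes 1\otimes v_i+1\otimes X+(\bar\Delta\otimes 1)X$, whereas $t_{3}(1\otimes X)=\sum_i v_i\otimes 1\otimes u_i=-\sum_i u_i\otimes 1\otimes v_i$ (the last equality because $\sum_i v_i\otimes u_i=t_{2}X=-X$). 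Hence $(1-t_{3})(1\otimes X)=1\otimes X+\sum_i u_i\otimes 1\otimes v_i$, so for such $X$ the identity $(\Delta\otimes 1)X=(1-t_{3})(1\otimes X)$ is \emph{equivalent} to $(\bar\Delta\otimes 1)X=0$.

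With this reduction the two implications are short. For ``$\Rightarrow$'': an element of $\mathcal I$ is visibly skew-symmetric and lies in $P(C)\otimes P(C)\subseteq C^{+}\otimes C^{+}$, and $\bar\Delta$ annihilates each of its legs, so $(\bar\Delta\otimes 1)X=0$ and the key computation gives the desired identity. For ``$\Leftarrow$'': assume $X$ is skew-symmetric and satisfies the identity. Applying $\varepsilon\otimes 1\otimes 1$ to it and using $(\varepsilon\otimes 1)\Delta=\mathrm{id}$ gives $X=X-1\otimes(1\otimes\varepsilon)X$, so $(1\otimes\varepsilon)X=0$; skew-symmetry then also gives $(\varepsilon\otimes 1)X=0$, whence $X\in C^{+}\otimes C^{+}$. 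Now the key computation applies and yields $(\bar\Delta\otimes 1)X=0$; writing $X=\sum_i u_i\otimes v_i$ with the $v_i$ linearly independent forces $\bar\Delta(u_i)=0$, i.e.\ $u_i\in P(C)$, so $X\in P(C)\otimes C$. Skew-symmetry puts $X$ also in $C\otimes P(C)$, hence in $(P(C)\otimes C)\cap(C\otimes P(C))=P(C)\otimes P(C)$; and a skew-symmetric element of $P(C)\otimes P(C)$, with $P(C)=\bigoplus_{i\in I}ke_i$, is precisely a $k$-combination of the $e_i\otimes e_j-e_j\otimes e_i$ with $i<j$, i.e.\ an element of $\mathcal I$.

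I do not expect a serious obstacle. The one point requiring care is the ordering of steps in the converse direction: the reduced coproduct $\bar\Delta$ is only meaningful once $X\in C^{+}\otimes C^{+}$ is known, so that verification (via the counit, which is exactly the dual form of $\{1,c\}=0$) must come first. The other mildly delicate point is the elementary linear-algebra fact $(U\otimes C)\cap(C\otimes U)=U\otimes U$ for a subspace $U\subseteq C$, used to pass from ``$X$ skew-symmetric with all left legs primitive'' to ``$X\in P(C)\otimes P(C)$''.
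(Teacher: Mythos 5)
Your proof is correct and is essentially the paper's own argument: in both, the heart is that skew-symmetry of $X$ turns $(1-t_3)(1\otimes X)$ into $1\otimes X+\sum_i u_i\otimes 1\otimes v_i$, and comparing this with $(\Delta\otimes 1)(X)$ plus linear independence of the right-hand tensor legs forces the left legs (and then the right legs) to be primitive. Your packaging via the reduced coproduct, the preliminary counit step, and the use of $(P(C)\otimes C)\cap(C\otimes P(C))=P(C)\otimes P(C)$ in place of the paper's minimal-length representation and symmetric repetition are only organizational differences (and, just like the paper, your final identification of skew-symmetric elements of $P(C)\otimes P(C)$ with $\mathcal{I}$ tacitly uses characteristic $\neq 2$).
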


\begin{proof}
``$\Rightarrow$'' Trivial.

``$\Leftarrow$'' Assume $0 \neq X=\sum _{i=1}^n a_i \otimes b_i$
with $n$ minimal. Then $\{ a_1, \cdots, a_n\}$ and
$\{b_1,\cdots,b_n\}$ are both linearly independent. Since
$(1+t_2)(X)=0$, i.e., $\sum _{i=1}^n a_i \otimes b_i =-\sum _{i=1}^n
b_i \otimes a_i$, then
$$(1-t_3)(1 \otimes X)=\sum _{i=1}^n (1\otimes a_i \otimes b_i -b_i \otimes 1 \otimes a_i )
= \sum _{i=1}^n(1 \otimes a_i + a_i \otimes 1) \otimes b_i.$$ Since
$(\Delta \otimes 1) (X) =(1-t_3) (1 \otimes X)$, then $\sum _{i=1}^n
\Delta(a_i) \otimes b_i = \sum _{i=1}^n (1 \otimes a_i + a_i \otimes
1) \otimes b_i$. Thus $a_i \in P(C)$ by the independence of the
$b_i$'s.

By using $\sum_{i=1}^na_i \otimes b_i =- \sum_{i=1}^nb_i \otimes a_i$
and a similar discussion, we have $b_i \in P(C)$ as well. The
assertion follows.
\end{proof}

The following facts are obvious.

\begin{lem}\label{fact}
Let $B$ be a bialgebra. Then
\begin{enumerate}
\item If $X \in B \otimes B$ is skew-symmetric, then so is $X \Delta(x)$ for any $x \in P(B)$.
\item For any $a \in B$ and $X \in B \otimes B$,
$$(\Delta \otimes 1) (X \Delta (a)) =(\Delta \otimes 1) (X) \cdot \Delta^{(2)}(a).$$
\end{enumerate}
\end{lem}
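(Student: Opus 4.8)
The plan is to deduce both parts from the single observation that the maps in question are multiplicative for the componentwise multiplication on tensor-product algebras, and then to apply them to the product $X\cdot\Delta(a)$ (resp. $X\cdot\Delta(x)$) computed inside $B\otimes B$.

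For part (1), I would first note that the flip $t_2\colon B\otimes B\to B\otimes B$ is not merely linear but an algebra automorphism of $B\otimes B$: on simple tensors one checks $t_2\big((u\otimes v)(u'\otimes v')\big)=vv'\otimes uu'=t_2(u\otimes v)\,t_2(u'\otimes v')$, and this extends by bilinearity (no commutativity of $B$ is needed). Since $x\in P(B)$ we have $\Delta(x)=x\otimes 1+1\otimes x$, which is visibly fixed by $t_2$. Hence
$$t_2\big(X\Delta(x)\big)=t_2(X)\,t_2(\Delta(x))=(-X)\,\Delta(x)=-X\Delta(x),$$
where we used that $X$ being skew-symmetric means exactly $t_2(X)=-X$. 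This says $X\Delta(x)$ is skew-symmetric. The same computation can be done by hand with $X=\sum a_i\otimes b_i$, rewriting $t_2(X\Delta(x))$ as $t_2(X)(1\otimes x)+t_2(X)(x\otimes 1)$, but invoking the automorphism property avoids the bookkeeping.

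For part (2), the key remark is that $\Delta\otimes 1\colon B\otimes B\to B\otimes B\otimes B$ is an algebra homomorphism: $\Delta$ is one because $B$ is a bialgebra, the identity of $B$ is one, and a tensor product of algebra homomorphisms is again an algebra homomorphism. Applying $\Delta\otimes 1$ to the product $X\cdot\Delta(a)$ in $B\otimes B$ therefore gives $(\Delta\otimes 1)(X\Delta(a))=(\Delta\otimes 1)(X)\cdot(\Delta\otimes 1)(\Delta(a))$, and $(\Delta\otimes 1)(\Delta(a))=\Delta^{(2)}(a)$ straight from the definition of $\Delta^{(2)}$. That is precisely the claimed identity.

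I do not expect a genuine obstacle: the only point requiring a moment's care is checking that $t_2$ and $\Delta\otimes 1$ respect the componentwise multiplication on tensor-product algebras, which is a one-line verification on simple tensors. Once that is recorded, each part is a two-line application of multiplicativity, consistent with the statement being labelled obvious.
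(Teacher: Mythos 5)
Your proof is correct: the paper states this lemma without proof (calling the facts obvious), and your argument—using that $t_2$ and $\Delta\otimes 1$ are algebra maps on $B\otimes B$ and that $\Delta(x)=x\otimes 1+1\otimes x$ is fixed by $t_2$—is exactly the standard verification the paper leaves implicit. Nothing further is needed.
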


In the following, $A=U(\mathfrak{g})=k[x_1, x_2, \cdots,
x_d]$.
Let $\mathcal{H}(A)$
be the set of all monic monomials of $A$. For any $a \in \mathcal{H}(A)$, $\Delta(a)=\sum a_1 \otimes a_2$ is always assumed to be
the expression by the standard $k$-basis of $k[x_1, x_2, \cdots,
x_d]$. For any $a \in \mathcal{H}(A)$, $|a|$ is the degree of $a$.

First, we establish a reciprocity law for two linear maps from $A$ to $A \otimes A$, which is a key step to characterize
the co-Poisson structures.

\begin{prop}\label{main2}
Let $q: A \to A
\otimes A$ and $I: A \to A \otimes A$  be two linear maps. Then $ I(a)
= (-1)^{|a_2|}q(a_1) \Delta (a_2)$ for all $a \in A$ if and only if
$q(a)=I(a_1) \Delta(a_2)$ for all $a \in A$.
\end{prop}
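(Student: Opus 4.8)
The plan is to recognize this as a purely formal ``convolution inverse'' identity on the coalgebra $A$, and to prove both implications at once by exhibiting the two constructions as mutually inverse operations under a suitable convolution.

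First I would set up the bookkeeping. Write $\sigma\colon A\to A$ for the linear map determined on the standard monomial basis by $\sigma(a)=(-1)^{|a|}a$; since $A$ is graded as a coalgebra (the comultiplication preserves total degree), $\sigma$ is a coalgebra automorphism, and in sigma notation $(-1)^{|a_2|}$ applied in the second tensor slot is exactly $\sigma$ applied there. Thus the first relation reads $I(a)=q(a_1)\,\sigma(a_2)\cdot(\text{something})$; more precisely, I would interpret both relations as statements about the right action of $A$ on $\Hom_k(A,A\otimes A)$ given by $(\phi\cdot a)(c)=\phi(c_1)\,\star(a)(c_2)$ for an appropriate ``multiplier'' built from $\Delta$ and $\sigma$. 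Concretely: define, for a linear map $\phi\colon A\to A\otimes A$, the two new maps $T_\sigma(\phi)$ and $T_{\Delta}(\phi)$ by $T_\sigma(\phi)(a)=\phi(a_1)\,\bigl(\sigma(a_2)\bigr)$ realized via $\Delta$ on the target, i.e. $T_\sigma(\phi)(a)=(-1)^{|a_2|}\phi(a_1)\Delta(a_2)$, and $T_\Delta(\phi)(a)=\phi(a_1)\Delta(a_2)$. The claim is then simply that $T_\sigma$ and $T_\Delta$ are inverse bijections on $\Hom_k(A,A\otimes A)$: $I=T_\sigma(q)\iff q=T_\Delta(I)$.

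The key computation is to check $T_\Delta\circ T_\sigma=\id=T_\sigma\circ T_\Delta$. For one direction, starting from $I(a)=(-1)^{|a_2|}q(a_1)\Delta(a_2)$, substitute into $T_\Delta(I)(a)=I(a_1)\Delta(a_2)$ and use coassociativity $(\Delta\otimes 1)\Delta=(1\otimes\Delta)\Delta$ to re-associate, obtaining $\sum (-1)^{|a_2|} q(a_1)\Delta(a_2)\Delta(a_3)$. Now $\Delta(a_2)\Delta(a_3)=\Delta(a_2 a_3)$ because $\Delta$ is an algebra map, and the crucial cancellation is the identity $\sum (-1)^{|a_2|} a_2\otimes a_3\mapsto$ collapsing under multiplication: precisely, $\sum (-1)^{|a_2|}\,a_2 a_3 = \sum \mu(\sigma\otimes\id)\Delta(a_2)$ summed against $a_1$, and since $\mu\circ(\sigma\otimes\id)\circ\Delta = \mu\circ(S_\sigma\otimes\id)\circ\Delta$ is the counit-unit composite whenever $\sigma$ is a convolution inverse of $\id$. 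Here is the heart of the matter: on the polynomial Hopf algebra $A=k[x_1,\dots,x_d]$ with its primitive generators, the grading involution $\sigma$ is exactly the antipode $S$ (on a primitive $x$, $S(x)=-x$; in general $S$ acts as $(-1)^{\deg}$ on the divided-power/monomial basis), so $\sum \sigma(a_1)a_2=\varepsilon(a)1=\sum a_1\sigma(a_2)$. Feeding this in makes the double sum telescope to $q(a)$, proving $T_\Delta\circ T_\sigma=\id$; the reverse composite $T_\sigma\circ T_\Delta=\id$ is the mirror-image computation, moving the sign onto the other factor.

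I expect the main obstacle to be purely notational: keeping the sign $(-1)^{|a_2|}$ correctly attached through the two applications of coassociativity, and verifying cleanly that the grading involution $\sigma$ coincides with the antipode of $A$ (equivalently, that $\sum(-1)^{|a_1|}a_1\otimes a_2$ multiplies to $\varepsilon$), which is where the specific structure of $A=k[x_1,\dots,x_d]$ enters — the statement would be false for a general coalgebra, so one cannot avoid using that $A$ is this particular graded Hopf algebra. Once that lemma is isolated, both implications are two-line manipulations. I would therefore structure the write-up as: (i) record $\sum (-1)^{|a_1|}a_1\otimes a_2$ has product $\varepsilon(a)1$ (a one-line induction on degree, or a citation to the antipode formula); (ii) assume $I(a)=(-1)^{|a_2|}q(a_1)\Delta(a_2)$ and compute $I(a_1)\Delta(a_2)$ down to $q(a)$ using (i) and coassociativity; (iii) assume $q(a)=I(a_1)\Delta(a_2)$ and run the symmetric computation for the converse.
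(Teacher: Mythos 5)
Your argument is correct, and it reaches the result by a genuinely different route than the paper. You observe that on $A=k[x_1,\dots,x_d]$ the antipode acts on a monomial $a$ as $S(a)=(-1)^{|a|}a$, so the two assignments in the statement are $q\mapsto q\ast(\Delta\circ S)$ and $I\mapsto I\ast\Delta$, where $\ast$ is convolution on $\Hom_k(A,A\otimes A)$ (coproduct in the source, multiplication of the algebra $A\otimes A$ in the target). Because $\Delta$ is an algebra map, the antipode axiom gives $(\Delta\circ S)\ast\Delta=\Delta\ast(\Delta\circ S)=\eta_{A\otimes A}\varepsilon$, the convolution unit, and associativity of convolution then shows in one stroke that the two operators are mutually inverse, which is exactly the asserted equivalence; your two telescoping computations are precisely $(q\ast\Delta S)\ast\Delta=q\ast(\Delta S\ast\Delta)$ and its mirror. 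The paper instead argues by induction on the degree of a monomial, reducing to the step $a\mapsto ax$ with $x$ primitive and invoking the identities \eqref{delta-a1-a2} and \eqref{delta-a}; those identities are nothing but the cancellation $\sum a_1S(a_2)=\varepsilon(a)1$ pushed through $\Delta$, i.e.\ the same key fact you isolate, but used termwise inside an induction rather than packaged as convolution invertibility. Your route buys brevity and generality (with $(-1)^{|a_2|}$ read as $S$ on the second leg, the statement and proof work over any Hopf algebra, and both directions come from a single associativity computation), whereas the paper's induction stays within the elementary monomial bookkeeping that it reuses later (Lemma \ref{turn}, Proposition \ref{main1}, Lemma \ref{main2'}). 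Two small points for the final write-up: you do not need $\sigma$ to be a coalgebra automorphism, only the antipode identity $\sum\sigma(a_1)a_2=\varepsilon(a)1=\sum a_1\sigma(a_2)$; and you should state explicitly that $(-1)^{|a_2|}q(a_1)\Delta(a_2)$ means $\mu_{A\otimes A}\circ\bigl(q\otimes(\Delta\circ S)\bigr)\circ\Delta$, which both fixes the meaning of the sign on non-homogeneous elements and is what legitimizes the convolution identification.
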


\begin{proof}
 First note that in our case (the algebra is generated by primitive elements), for any $1 \neq a \in \mathcal{H}(A)$,
\begin{equation}\label{delta-a1-a2}
  (-1)^{|a_2|} \Delta(a_1a_2)=0.
\end{equation}
Since $a
\otimes 1$ is one of the terms of $\Delta(a)$ for any $a \in \mathcal{H}(A)$, then
\begin{equation}\label{delta-a}
 \sum_{a_2 \neq 1} (-1)^{|a_2|+1} \Delta(a_1a_2) =\Delta(a).
\end{equation}

``$\Rightarrow$'' Obviously, $I(1)=q(1)$, and so $q(a)=I(a_1)
\Delta(a_2)$ for $a=1$. We prove $q(a)=I(a_1) \Delta(a_2)$ holds for
any $a \in \mathcal{H}(A)$ by induction on the degree of $a$.
Suppose $q(a)=I(a_1) \Delta(a_2)$ holds for all $a \in
\mathcal{H}(A)$ of degree no more than $n$. To finish the proof, it
suffices to show that $q(ax)=I(a_1x)\Delta(a_2) + I(a_1) \Delta(a_2
x)$ for any $x \in P(A)$. Since, by assumption,
\begin{align*}
&I(ax)=(-1)^{|a_2|} q(a_1x) \Delta(a_2)
+ (-1)^{|a_2|+1} q(a_1) \Delta(a_2x)\\
=&q(ax)+ \sum_{a_3 \neq 1} (-1)^{|a_3|} (I(a_1x) \Delta(a_2)+
I(a_1)\Delta(a_2x)) \Delta(a_3) \\
&+ (-1)^{|a_3| +1} I(a_1) \Delta(a_2) \Delta(a_3x),
\end{align*}
then
$$I(ax) + I(a_1) \Delta(a_2x) = q(ax) +\sum_{a_3 \neq 1} (-1)^{|a_3|} I(a_1x) \Delta(a_2a_3).$$
Thus $q(ax)= I(a_1x) \Delta(a_2) + I(a_1) \Delta(a_2x)$ by
\eqref{delta-a}.

``$\Leftarrow$'' Obviously, $I(a) = (-1)^{|a_2|} q(a_1) \Delta (a_2)$
holds for $a =1$ as $q(1) = I(1)$ by $q(a)=I(a_1) \Delta(a_2)$. It
suffices to show that if $I(a) = (-1)^{|a_2|} q(a_1) \Delta (a_2)$
then for any $x \in P(A)$,
$$I(ax)=(-1)^{|a_2|} q(a_1x) \Delta(a_2) +(-1)^{|a_2|+1} q(a_1) \Delta(a_2x).$$
This is equivalent to that
$$I(ax)=(-1)^{|a_3|} (I(a_1x) \Delta(a_2)+I(a_1) \Delta(a_2x)) \Delta(a_3) +
(-1)^{|a_3|+1} I(a_1) \Delta(a_2)\Delta(a_3x),$$ i.e.,
$$I(ax)=(-1)^{|a_3|} I(a_1x) \Delta(a_2a_3),$$
which is always true by \eqref{delta-a1-a2}.
\end{proof}


To prove Proposition \ref{main1}, we need the following lemma.
\begin{lem}\label{turn}
Let $A=U(\mathfrak{g})=k[x_1,\cdots,x_d]$. Then for any linear map $q: A \to A
\otimes A$ and $a \in \mathcal{H}(A)$,
$$  (-1)^{|a_2|} (1 \otimes q) \Delta(a_1) \cdot \Delta^{(2)}(a_2) = (-1)^{|a_2|} 1 \otimes q(a_1) \Delta (a_2).$$
\end{lem}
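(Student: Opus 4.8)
\emph{Proof strategy.} Write $I(a):=\sum(-1)^{|a_2|}q(a_1)\Delta(a_2)$ for $a\in\mathcal H(A)$ and extend $\kk$-linearly; this is precisely the map attached to $q$ in Proposition~\ref{main2}, and the asserted identity is exactly
$$P_q(a):=\sum(-1)^{|a_2|}(1\otimes q)\Delta(a_1)\cdot\Delta^{(2)}(a_2)\;=\;1\otimes I(a).$$
Both $P_q(a)$ and $I(a)$ are $\kk$-linear in the pair $(q,a)$, and $\Delta\colon A\to A^{\otimes2}$, $\Delta^{(2)}\colon A\to A^{\otimes3}$ are algebra homomorphisms (as $A=U(\mathfrak g)$). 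The plan is an induction on $|a|$, but with the inductive hypothesis stated \emph{uniformly in $q$}: for each $n$, the identity $P_r(a)=1\otimes I_r(a)$ holds for \emph{every} $\kk$-linear $r\colon A\to A\otimes A$ and every $a\in\mathcal H(A)$ with $|a|\le n$. The base case $a=1$ is immediate from $\Delta(1)=1\otimes1$ and $\Delta^{(2)}(1)=1^{\otimes3}$, which give $P_q(1)=1\otimes q(1)=1\otimes I(1)$.

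For the inductive step, write a monic monomial of degree $n+1$ as $ax$ with $x\in\{x_1,\dots,x_d\}$ and $|a|\le n$. Since $\Delta(ax)=\Delta(a)\Delta(x)=\sum(a_1x\otimes a_2+a_1\otimes a_2x)$ and $\Delta^{(2)}(a_2x)=\Delta^{(2)}(a_2)\Delta^{(2)}(x)$, splitting the defining sum of $P_q(ax)$ along the two halves of $\Delta(ax)$ and using $|a_2x|=|a_2|+1$ gives $P_q(ax)=P_1-P_2$ with
\begin{align*}
P_1&=\textstyle\sum(-1)^{|a_2|}(1\otimes q)\Delta(a_1x)\cdot\Delta^{(2)}(a_2),\\
P_2&=\textstyle\sum(-1)^{|a_2|}(1\otimes q)\Delta(a_1)\cdot\Delta^{(2)}(a_2)\cdot\Delta^{(2)}(x)=P_q(a)\cdot\Delta^{(2)}(x).
\end{align*}
The inductive hypothesis applied to $(a,q)$ turns $P_2$ into $(1\otimes I(a))\Delta^{(2)}(x)=x\otimes I(a)+1\otimes I(a)\Delta(x)$. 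For $P_1$ the key point is that, expanding $\Delta(a_1x)=\Delta(a_1)\Delta(x)$,
$$(1\otimes q)\Delta(a_1x)=(x\otimes1\otimes1)\cdot(1\otimes q)\Delta(a_1)+(1\otimes\widetilde q)\Delta(a_1),\qquad \widetilde q(b):=q(bx),$$
whence $P_1=(x\otimes1\otimes1)\cdot P_q(a)+P_{\widetilde q}(a)$. Now one applies the inductive hypothesis \emph{twice} --- to $(a,q)$ and to $(a,\widetilde q)$, both at degree $\le n$ --- obtaining $P_1=x\otimes I(a)+1\otimes I_{\widetilde q}(a)$ with $I_{\widetilde q}(a)=\sum(-1)^{|a_2|}q(a_1x)\Delta(a_2)$. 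Subtracting,
$$P_q(ax)=1\otimes\Bigl(\sum(-1)^{|a_2|}q(a_1x)\Delta(a_2)-I(a)\Delta(x)\Bigr),$$
and expanding $I(ax)=\sum(-1)^{|(ax)_2|}q((ax)_1)\Delta((ax)_2)$ along $\Delta(ax)=\Delta(a)\Delta(x)$ (once more using $|a_2x|=|a_2|+1$) identifies the parenthesised element as $I(ax)$. This closes the induction.

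The one point that must be gotten right is the \emph{uniformity over all linear maps $q$}: expanding $\Delta(a_1x)$ unavoidably produces the twisted term $P_{\widetilde q}(a)$ with $\widetilde q(b)=q(bx)$, which is not covered by a naive single-map induction, so the statement has to be carried for all $q$ at once. (As a sanity check, applying the algebra map $\varepsilon\otimes1\otimes1$ to $P_q(a)$ and using $(\varepsilon\otimes q)\Delta=q$ and $(\varepsilon\otimes1\otimes1)\Delta^{(2)}=\Delta$ yields $I(a)$, matching $1\otimes I(a)$; but this only pins down the last two tensor slots and does not give the identity, which is why the induction is needed.) Note that neither commutativity nor cocommutativity of $A$ is used --- only that $A$ is generated by the degree-one primitive elements $x_1,\dots,x_d$, which is also exactly what makes each $(ax)_2$ again a monomial so that $|\,\cdot\,|$ is defined on the coproduct terms.
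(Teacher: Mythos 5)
Your proof is correct. The induction closes: with $P_q(a)=\sum(-1)^{|a_2|}(1\otimes q)\Delta(a_1)\cdot\Delta^{(2)}(a_2)$ and $I_q(a)=\sum(-1)^{|a_2|}q(a_1)\Delta(a_2)$, your splitting of $\Delta(ax)=\Delta(a)\Delta(x)$ gives $P_q(ax)=1\otimes\bigl(I_{\widetilde q}(a)-I_q(a)\Delta(x)\bigr)$, and the same splitting applied to $I_q(ax)$ identifies the parenthesized element as $I_q(ax)$, so the step is complete. Your route is organized differently from the paper's: the paper first proves, by induction on $|a|$, the purely coalgebraic ($q$-free) identity \eqref{equ-1}, namely $\sum(-1)^{|a_1|+|a_2|}a_1a_3\otimes a_2\otimes a_4=\sum(-1)^{|a_1|}1\otimes a_1\otimes a_2$, and then obtains the lemma in one stroke by applying the linear map $u\otimes v\otimes w\mapsto u\otimes q(v)\Delta(w)$ (written there as applying $1\otimes q\otimes\Delta$ and multiplying slots). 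You instead induct on the statement itself, strengthening the hypothesis to hold uniformly over all linear maps $q$ so as to absorb the twisted map $\widetilde q(b)=q(bx)$ produced in the step; since $q$ ranges over all linear maps, this uniform statement is in substance equivalent to the paper's universal identity, so the two arguments carry the same combinatorial content. The paper's version isolates a reusable identity about the iterated coproduct and keeps $q$ out of the induction entirely; yours avoids introducing the auxiliary identity at the cost of carrying the quantification over $q$. One small caveat: your closing remark that commutativity is not used is slightly overstated — writing $(1\otimes q)\Delta(a_1x)=(x\otimes1\otimes1)\cdot(1\otimes q)\Delta(a_1)+(1\otimes\widetilde q)\Delta(a_1)$ and then pulling $x\otimes1\otimes1$ out of $P_1$ past $\Delta^{(2)}(a_2)$ uses commutativity of $A$ (or requires placing these factors as right multiplications throughout); this is harmless here since $A=k[x_1,\dots,x_d]$, but the argument as written is not commutativity-free.
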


\begin{proof}
We claim first for any $a \in \mathcal{H}(A)$,
\begin{equation}\label{equ-1}
 \sum (-1)^{|a_1|+|a_2|} a_1 a_3 \otimes a_2 \otimes a_4= \sum (-1)^{|a_1|} 1 \otimes a_1 \otimes a_2.
\end{equation}
It is obviously true for $a=1$. Now assume \eqref{equ-1} holds
for $a$. We show \eqref{equ-1} holds for $ax$ for any $x \in P(A)$.

Since $\Delta^{(3)}(x) =1 \otimes 1 \otimes 1 \otimes x+1 \otimes 1
\otimes x \otimes 1+1 \otimes x \otimes 1 \otimes 1+x \otimes 1
\otimes 1 \otimes 1,$ then
\begin{align*}
&(-1)^{|(ax)_1|+|(ax)_2|} (ax)_1 (ax)_3 \otimes (ax)_2 \otimes (ax)_4\\
= &(-1)^{|a_1|+|a_2|} a_1 a_3 \otimes a_2 \otimes a_4x+(-1)^{|a_1|+|a_2|} a_1 a_3x \otimes a_2 \otimes a_4\\
 &-(-1)^{|a_1|+|a_2|} a_1 a_3 \otimes a_2x \otimes a_4-(-1)^{|a_1|+|a_2|} a_1 x a_3 \otimes a_2 \otimes a_4\\
= &(-1)^{|a_1|+|a_2|} a_1 a_3 \otimes a_2 \otimes a_4x-(-1)^{|a_1|+|a_2|} a_1 a_3 \otimes a_2x \otimes a_4\\
=&(-1)^{|a_1|} 1 \otimes a_1 \otimes a_2x - (-1)^{|a_1|} 1 \otimes a_1x \otimes a_2\\
= &(-1)^{|(ax)_1|} 1 \otimes (ax)_1 \otimes (ax)_2.
\end{align*}
Hence \eqref{equ-1} holds for all $a \in \mathcal{H}(A)$.
By applying $1 \otimes q \otimes \Delta$ to \eqref{equ-1}, then
$$(-1)^{|a_1|+|a_2|} a_1 a_3 \otimes q(a_2) \otimes \Delta (a_4)=  (-1)^{|a_1|} 1 \otimes q(a_1) \otimes  \Delta(a_2).$$
Thus
$$  (-1)^{|a_1|} (1 \otimes q) \Delta(a_1) \cdot \Delta^{(2)}(a_2) = (-1)^{|a_1|} 1 \otimes q(a_1) \Delta (a_2),$$
and the proof is finished.
\end{proof}

\begin{prop}\label{main1}
Let $A=U(\mathfrak{g})=k[x_1,\cdots,x_d]$ and $ \mathcal{I} =
\oplus_{i<j } k(x_i \otimes x_j-x_j \otimes x_i).$ Then, for any linear map
$q: A \to A \otimes A$, the following are equivalent.
\begin{enumerate}
\item $q$ is skew-symmetric and satisfies the co-Leibniz rule.
\item $ I(a)=(-1)^{|a_2|} q(a_1) \Delta(a_2) \in \mathcal{I}$ for all $a \in A$.
\end{enumerate}
\end{prop}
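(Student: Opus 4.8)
The plan is to establish the equivalence one piece at a time. Condition (1) consists of skew-symmetry of $q$ together with the co-Leibniz rule, and since $A$ is cocommutative the latter may be taken in the form \eqref{co-Leibnitz-2}; condition (2), by Lemma \ref{elements-in-i}, says that for every $a\in A$ the element $I(a)$ satisfies $(1+t_2)(I(a))=0$ and $(\Delta\otimes 1)(I(a))=(1-t_3)(1\otimes I(a))$. I would prove separately that skew-symmetry of $q$ is equivalent to the first condition on all the $I(a)$, and that the co-Leibniz rule for $q$ is equivalent to the second, using in both cases the reciprocity of Proposition \ref{main2}.

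For the skew-symmetry, the point is that $t_2$ is an algebra automorphism of $A\otimes A$ which, by cocommutativity, fixes every $\Delta(b)$; hence
\[(1+t_2)\bigl(q(a_1)\Delta(a_2)\bigr)=\bigl((1+t_2)q\bigr)(a_1)\cdot\Delta(a_2),\]
so the pair of linear maps $(1+t_2)q$ and $(1+t_2)I$ still satisfies the hypothesis $\bigl((1+t_2)I\bigr)(a)=(-1)^{|a_2|}\bigl((1+t_2)q\bigr)(a_1)\Delta(a_2)$ of Proposition \ref{main2}. That proposition then yields $(1+t_2)q=0$ if and only if $(1+t_2)I=0$.

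For the co-Leibniz rule I would put $L=(\Delta\otimes 1)q-(1-t_3)(1\otimes q)\Delta$ and, for $a\in A$, $M(a)=(\Delta\otimes 1)(I(a))-(1-t_3)(1\otimes I(a))$, both valued in $A^{\otimes 3}$; by \eqref{co-Leibnitz-2}, $q$ satisfies the co-Leibniz rule iff $L\equiv 0$, and by Lemma \ref{elements-in-i} together with the previous paragraph, condition (2) holds iff $M\equiv 0$. Expanding $I(a)=\sum(-1)^{|a_2|}q(a_1)\Delta(a_2)$, using Lemma \ref{fact}(2) on the first summand of $M(a)$, using Lemma \ref{turn} to rewrite $1\otimes I(a)=\sum(-1)^{|a_2|}(1\otimes q)\Delta(a_1)\cdot\Delta^{(2)}(a_2)$, and using that $t_3$ is an algebra automorphism of $A^{\otimes 3}$ fixing every totally symmetric tensor $\Delta^{(2)}(b)$, one obtains
\[M(a)=\sum (-1)^{|a_2|}\,L(a_1)\cdot\Delta^{(2)}(a_2).\]
This is exactly the hypothesis of Proposition \ref{main2} with $A\otimes A$ and $\Delta$ replaced respectively by $A^{\otimes 3}$ and $\Delta^{(2)}$ (and $q,I$ by $L,M$); its proof goes through unchanged, since applying $\Delta\otimes 1$ to \eqref{delta-a1-a2} and \eqref{delta-a} gives the analogous identities for $\Delta^{(2)}$. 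Hence $M\equiv 0$ iff $L\equiv 0$, and combining the two equivalences with Lemma \ref{elements-in-i} and \eqref{co-Leibnitz-2} finishes the proof.

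The step I expect to be the main obstacle is the identity $M(a)=\sum(-1)^{|a_2|}L(a_1)\Delta^{(2)}(a_2)$: one must treat the three summands of $M(a)$ individually, and the delicate point is that pushing $t_3$ through the expression furnished by Lemma \ref{turn} is legitimate precisely because $\Delta^{(2)}(b)$ is symmetric in its three tensor factors; once this is in place, observing that Proposition \ref{main2} applies verbatim over $A^{\otimes 3}$ requires nothing new.
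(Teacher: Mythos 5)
Your proof is correct, but it is organized genuinely differently from the paper's. The paper proves the two implications separately: for (1)$\Rightarrow$(2) it verifies the two conditions of Lemma \ref{elements-in-i} for $I(a)$ by essentially the computation you perform (Lemma \ref{fact}, Lemma \ref{turn}, and the $t_3$-invariance of $\Delta^{(2)}(a_2)$), while for (2)$\Rightarrow$(1) it runs a fresh induction on $\deg a$, writing $q(a)=\sum_{a_2\neq 1}(-1)^{|a_2|+1}q(a_1)\Delta(a_2)+I(a)$ and re-deriving skew-symmetry and the co-Leibniz identity degree by degree. You instead transport the defect operators through the correspondence: the identities $(1+t_2)I(a)=\sum(-1)^{|a_2|}\bigl((1+t_2)q\bigr)(a_1)\Delta(a_2)$ and $M(a)=\sum(-1)^{|a_2|}L(a_1)\Delta^{(2)}(a_2)$ hold unconditionally, and the reciprocity of Proposition \ref{main2} inverts them (for the second, its extension to maps $A\to A^{\otimes 3}$ with $\Delta^{(2)}$ in place of $\Delta$, which, as you rightly note, only requires that $\Delta^{(2)}$ be a unital algebra morphism together with the $\Delta^{(2)}$-analogues of \eqref{delta-a1-a2} and \eqref{delta-a}); this treats both implications at once and replaces the paper's second induction. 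Incidentally, the inversion could even be done without the generalized reciprocity, by a one-line induction, since the $a_2=1$ term of your identity is $L(a)$ itself. The only blemish is the sentence ``condition (2) holds iff $M\equiv 0$'': by Lemma \ref{elements-in-i} it holds iff both $(1+t_2)I\equiv 0$ and $M\equiv 0$, which is what your final combination in fact uses, so nothing is broken. In terms of trade-offs, the paper's argument is self-contained and never leaves $A\otimes A$-valued maps, whereas yours is more uniform and symmetric in the two directions at the modest cost of checking that Proposition \ref{main2} is insensitive to replacing $(A\otimes A,\Delta)$ by $(A^{\otimes 3},\Delta^{(2)})$.
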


\begin{proof}
``(1) $\Rightarrow$ (2)'' By Lemma \ref{elements-in-i}, it suffice to show
$(1+t_2) \left((-1)^{|a_2|} q(a_1) \Delta(a_2)\right)=0$ and
$(\Delta \otimes 1) ((-1)^{|a_2|} q(a_1) \Delta(a_2)) =(1-t_3)\left(1 \otimes (-1)^{|a_2|} q(a_1) \Delta(a_2)\right).$

Note that $A$ is generated by primitive elements. It follows from
Lemma \ref{fact} that $(1+t_2) ((-1)^{|a_2|} q(a_1) \Delta(a_2))=0$
for all $a \in \mathcal{H}(A)$.

By Lemmas \ref{fact}, \ref{turn} and the co-Leibniz rule \eqref{co-Leibnitz-2},
\begin{align*}
&(\Delta \otimes 1) ((-1)^{|a_2|} q(a_1) \Delta(a_2))\\
= & (-1)^{|a_2|} (\Delta \otimes 1) q(a_1) \cdot \Delta^{(2)}(a_2)\\
=&(1-t_3)(-1)^{|a_2|}(1 \otimes q) \Delta(a_1) \cdot \Delta^{(2)}(a_2)\\
=&(1-t_3)\left((-1)^{|a_2|}(1 \otimes q) \Delta(a_1) \cdot \Delta^{(2)}(a_2)\right)\\
=&(1-t_3)\left(1 \otimes (-1)^{|a_2|} q(a_1) \Delta(a_2)\right).
\end{align*}

``(2) $\Rightarrow$ (1)'' Suppose $ (-1)^{|a_2|} q(a_1) \Delta(a_2) \in
\mathcal{I}$ for all $a \in \mathcal{H}(A)$. Then $q(1) \in \mathcal{I}$.
We check the skew symmetric property of $q$ and the co-Leibniz rule
$$(\Delta \otimes 1) q (a)=(1-t_3) (1 \otimes q) \Delta(a)$$ by induction on the degree of
$a$.
They are true for $a=1$ by Lemma \ref{elements-in-i}. Assume they are true
for $a$ of degree no more than $n$. Now for any $a$ with $\deg
a=n+1$.
Since $I(a)= (-1)^{|a_2|} q(a_1) \Delta(a_2) \in \mathcal{I}$,
$$q(a) = \sum_{a_2 \neq 1} (-1)^{|a_2|+1} q(a_1) \Delta(a_2) + I(a).$$
Since $q(a_1)$ is skew symmetric for $a_2 \neq 1$ by induction
hypothesis and $I(a) \in \mathcal{I}$,  then $q(a)$ is skew
symmetric by Lemma \ref{fact}. By Lemma \ref{turn},
\begin{align*}
&(\Delta \otimes 1)( q(a))\\
=&(\Delta \otimes 1) \left( \sum_{a_2 \neq 1} (-1)^{|a_2|+1} q(a_1) \Delta(a_2) + I(a)\right)\\
=&(-1)^{|a_2|+1} \sum_{a_2 \neq 1} (\Delta \otimes 1) q(a_1) \cdot \Delta^{(2)}(a_2) +   (\Delta \otimes 1)(I(a))\\
=&(1-t_3) \left( \sum_{a_2 \neq 1}(-1)^{|a_2|+1}(1 \otimes q) \Delta(a_1) \right)  \cdot \Delta^{(2)} (a_2) + (1-t_3)(1 \otimes I(a))\\
=&(1-t_3) \left( \sum_{a_2 \neq 1}(-1)^{|a_2|+1}(1 \otimes q) \Delta(a_1) \cdot \Delta^{(2)} (a_2) \right)\\
&+ (1-t_3) \left(\sum (-1)^{|a_2|}(1 \otimes q(a_1)\Delta(a_2))\right)\\
=&(1-t_3) \left( (1 \otimes q) \Delta(a) \right).
\end{align*}
The proof is finished.
\end{proof}

\begin{prop} \label{co-jac-formula} For any $a  \in \mathcal{H}(A)$,
let $I(a) = \sum_{1 \le i,j \le d} \lambda_a^{ij} x_i \otimes x_j
\in \mathcal{I}$ with $(\lambda_a^{ij})_{d \times d} \in M_d(k)$
skew-symmetric. Then the linear map $q: A \to A \otimes A, a \mapsto
I(a_1)\Delta(a_2)$ defines a co-Poisson structure on the coalgebra
$A$ if and only if for all $1 \le i<j<k \le d$ and $a \in A$,
\begin{equation}\label{co-Jaco-id}
\sum_{s=1}^d  \left( \lambda_{a_1}^{sk} \lambda_{x_sa_2}^{ij}
+ \lambda_{a_1}^{si} \lambda_{x_sa_2}^{jk}+\lambda_{a_1}^{sj} \lambda_{x_sa_2}^{ki} \right)=0.
\end{equation}
\end{prop}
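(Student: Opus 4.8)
The plan is as follows. First I would reduce to the co-Jacobi identity. By Proposition~\ref{main2}, the hypothesis that $q(a)=I(a_1)\Delta(a_2)$ for all $a$ is equivalent to $I(a)=(-1)^{|a_2|}q(a_1)\Delta(a_2)$ for all $a$; since each $I(a)$ lies in the subspace $\mathcal I$, Proposition~\ref{main1} then shows that $q$ is skew-symmetric and satisfies the co-Leibniz rule. Hence $q$ defines a co-Poisson structure on the coalgebra $A$ precisely when it satisfies the co-Jacobi identity $\circlearrowleft\,(q\otimes 1)q=0$, and it remains to show that this holds if and only if \eqref{co-Jaco-id} holds for every monic monomial $a$ (equivalently, by $k$-linearity, every $a\in A$) and all $i<j<k$.

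Next I would put $(q\otimes 1)q$ into a factored form. Writing $q(b)=\sum_{s,t}\lambda_{b_1}^{st}\,x_sb_2\otimes x_tb_3$ with $\Delta^{(2)}(b)=\sum b_1\otimes b_2\otimes b_3$, expanding $q(x_sa_2)$ using the defining formula together with the primitivity of the generators $x_i$, and reindexing through the four-fold coproduct of $a$, one obtains
$$(q\otimes 1)q(a)=\sum\,\bigl(\widetilde T_1+\widetilde T_2+\widetilde T_3\bigr)(a_1)\cdot\Delta^{(2)}(a_2),$$
where, for $\Delta(c)=\sum c_1\otimes c_2$,
$$\widetilde T_1(c)=\sum_{s,t,i,j}\lambda_{c_1}^{st}\lambda_{x_sc_2}^{ij}\,x_i\otimes x_j\otimes x_t,$$
$$\widetilde T_2(c)=\sum_{s,t,i,j}\lambda_{c_1}^{st}\lambda_{c_2}^{ij}\,x_sx_i\otimes x_j\otimes x_t,\qquad \widetilde T_3(c)=\sum_{s,t,i,j}\lambda_{c_1}^{st}\lambda_{c_2}^{ij}\,x_i\otimes x_sx_j\otimes x_t.$$
A straightforward induction on $\deg a$ (using that $a\otimes 1\otimes 1$ is a term of $\Delta^{(2)}(a)$) shows that any linear map $G\colon A\to A^{\otimes 3}$ with $\sum G(a_1)\Delta^{(2)}(a_2)=0$ for all $a$ is zero. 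Since $\Delta^{(2)}$ is cocommutative one has $\circlearrowleft\bigl(\sum G(a_1)\Delta^{(2)}(a_2)\bigr)=\sum(\circlearrowleft G)(a_1)\Delta^{(2)}(a_2)$ for every $G$; taking $G=\widetilde T_1+\widetilde T_2+\widetilde T_3$ therefore reduces the co-Jacobi identity to $\circlearrowleft\,(\widetilde T_1+\widetilde T_2+\widetilde T_3)=0$.

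The crucial step, and the main obstacle, is the identity $\circlearrowleft\,(\widetilde T_2+\widetilde T_3)=0$. I would prove it by writing out the six terms $t_3^{m}\widetilde T_\ell(c)$ for $m=0,1,2$ and $\ell=2,3$ and grouping them according to which of the three tensor slots carries the degree-two factor $x_\bullet x_\bullet$; within each of the three resulting pairs the two terms cancel, once one relabels the summation indices using $\lambda^{ij}=-\lambda^{ji}$ and uses the cocommutativity of $\Delta$ to interchange the two legs $c_1,c_2$ feeding the $\lambda$'s. Consequently $\circlearrowleft\,(q\otimes 1)q=0$ is equivalent to $\circlearrowleft\,\widetilde T_1=0$.

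Finally I would read off the condition. Since $\widetilde T_1$ takes values in $\mathfrak g^{\otimes 3}=\bigoplus_{i,j,k}k\,(x_i\otimes x_j\otimes x_k)$, the coefficient of $x_i\otimes x_j\otimes x_k$ in $\circlearrowleft\,\widetilde T_1(a)$ is exactly $\sum_{s=1}^d\bigl(\lambda_{a_1}^{sk}\lambda_{x_sa_2}^{ij}+\lambda_{a_1}^{si}\lambda_{x_sa_2}^{jk}+\lambda_{a_1}^{sj}\lambda_{x_sa_2}^{ki}\bigr)$. This expression is invariant under cyclic permutation of $(i,j,k)$ and changes sign under a transposition of two of the indices (by skew-symmetry of the matrices $(\lambda^{ij}_\bullet)$), so it vanishes for all $i,j,k$ if and only if it vanishes for all $i<j<k$. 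Hence $\circlearrowleft\,(q\otimes 1)q=0$ if and only if \eqref{co-Jaco-id} holds for all $a$ and all $i<j<k$, which is the assertion. Besides the identity $\circlearrowleft\,(\widetilde T_2+\widetilde T_3)=0$, the only delicate point is carrying out the Sweedler bookkeeping in the passage to the four-fold coproduct of $a$ without sign errors.
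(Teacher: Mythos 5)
Your proposal is correct and follows essentially the same route as the paper's proof: reduce via Proposition~\ref{main1} to the co-Jacobi identity, expand $(q\otimes 1)q(a)$ into the part involving $\lambda_{x_sa_2}^{ij}$ (the paper's $U$, your $\widetilde T_1$) plus the part involving only $\lambda_{a_1}^{st}\lambda_{a_2}^{ij}$ (the paper's $V$, your $\widetilde T_2+\widetilde T_3$), kill the latter under $\circlearrowleft$ by skew-symmetry and cocommutativity, and read off condition \eqref{co-Jaco-id} from what remains. The only cosmetic difference is that you extract the condition via the injectivity of $G\mapsto\sum G(a_1)\Delta^{(2)}(a_2)$, whereas the paper looks at the degree-three coefficients (the terms with $a_3\otimes a_4\otimes a_5=1\otimes1\otimes1$); both are sound.
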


\begin{proof} By Proposition \ref{main1}, we need only to care for the co-Jacobi identity.
For any $a \in \mathcal{H}(A)$,
\begin{align*}
&(q \otimes 1) q(a)=(q \otimes 1) (I(a_1) \Delta(a_2))\\
=&(q \otimes 1) (\sum_{s,t}\lambda_{a_1}^{s t} (x_s \otimes x_t)
\Delta(a_2))
=\sum_{s,t} \lambda_{a_1}^{s t} q(x_s a_2) \otimes x_t a_3\\
=&\sum_{s,t} \lambda_{a_1}^{s t} ( I(x_s a_2) \Delta (a_3)+ I(a_2) \Delta (x_s a_3)) \otimes x_t a_4\\
=&\sum_{s,t,i,j} \lambda_{a_1}^{s t} \lambda_{x_sa_2}^{i j} (x_i \otimes x_j \otimes x_t) (a_3 \otimes a_4 \otimes a_5)\\
&+\sum_{s,t,i,j} \lambda_{a_1}^{s t} \lambda_{a_2}^{i j}(x_i x_s \otimes x_j \otimes x_t
+x_i \otimes x_j x_s \otimes x_t) (a_3 \otimes a_4 \otimes a_5)\\
=&U+V,
\end{align*}
where $U$ is the first term, i.e.,
$$U=\sum_{s,t,i,j} \lambda_{a_1}^{s t} \lambda_{x_sa_2}^{i j}
(x_i \otimes x_j \otimes x_t) (a_3 \otimes a_4 \otimes a_5),$$ and $V$ is the second term,
i.e.,
$$V=\sum_{s,t,i,j} \lambda_{a_1}^{s t} \lambda_{a_2}^{i j}(x_i x_s \otimes x_j \otimes x_t
+x_i \otimes x_j x_s \otimes x_t) (a_3 \otimes a_4 \otimes a_5).$$
Then, by the skew-symmetric property of $\{ \lambda_a^{ij} \}_{d \times d}$
and the cocommutativity of $A$,
\begin{align*}
\circlearrowleft V
=&\sum_{s,t,i,j} (\lambda_{a_1}^{s t} \lambda_{a_2}^{i j}+ \lambda_{a_1}^{i j} \lambda_{a_2}^{t s})
(x_i x_s \otimes x_j \otimes x_t) (a_3 \otimes a_4 \otimes a_5)\\
& + \sum_{s,t,i,j} (\lambda_{a_1}^{s t} \lambda_{a_2}^{i j}+ \lambda_{a_1}^{j i} \lambda_{a_2}^{s t})
(x_i \otimes x_j x_s \otimes x_t) (a_3 \otimes a_4 \otimes a_5)\\
& +  \sum_{s,t,i,j} (\lambda_{a_1}^{s t} \lambda_{a_2}^{i j}+ \lambda_{a_1}^{j i} \lambda_{a_2}^{s t})
(x_t \otimes x_i \otimes x_j x_s ) (a_3 \otimes a_4 \otimes a_5)\\
=& 0.
\end{align*}
So, the co-Jacobi identity holds if and only if $\circlearrowleft U = 0$.
Note that
\begin{align*}
\circlearrowleft U
=& \sum_{s,t,i,j} \lambda_{a_1}^{s t} \lambda_{x_sa_2}^{i j}
(x_i \otimes x_j \otimes x_t  + x_t \otimes x_i \otimes x_j + x_j \otimes x_t \otimes x_i) (a_3 \otimes a_4 \otimes a_5)\\
=&\sum_{i,j,k}\sum_s  \left( \lambda_{a_1}^{sk} \lambda_{x_sa_2}^{ij}
+ \lambda_{a_1}^{si} \lambda_{x_sa_2}^{jk}+\lambda_{a_1}^{sj} \lambda_{x_sa_2}^{ki} \right)
(x_i \otimes x_j \otimes x_k)(a_3 \otimes a_4 \otimes a_5).
\end{align*}
If $\circlearrowleft U = 0$,  then, by considering the coefficients of elements in degree 3
in $\circlearrowleft U$ (i.e., when $a_3 \otimes a_4 \otimes a_5 = 1 \otimes 1 \otimes 1$),
$$c_{ijk}=\sum_s \left( \lambda_{a_1}^{sk} \lambda_{x_sa_2}^{ij}
+ \lambda_{a_1}^{si} \lambda_{x_sa_2}^{jk}+\lambda_{a_1}^{sj} \lambda_{x_sa_2}^{ki} \right)=0$$
for all $1 \le i, j, k \le d$ and $a \in \mathcal{H}(A)$.

Conversely, if $c_{ijk}=0$ for all $1 \le i, j, k \le d$ and $a \in \mathcal{H}(A)$, then $\circlearrowleft U = 0$.
\end{proof}

In summary, we have the following result.

\begin{thm} \label{main-result}
Let $A=U(\mathfrak{g})=k[x_1,\cdots,x_d]$. Then a linear map
$q: A \to A \otimes A$ gives a co-Poisson coalgebra structure on $A$ if and only if there is a linear map $I : A \to  A \otimes A$ such that for all  $a \in \mathcal{H}(A)$,
\begin{enumerate}
\item $q(a)=I(a_1) \Delta(a_2)$.
\item $I(a)= \sum_{1 \le i,j \le d} \lambda_a^{ij} x_i \otimes x_j \in \mathcal{I}$
with $(\lambda_a^{ij})_{d \times d} \in M_d(k)$
skew-symmetric.
\item For all $1 \le i<j<k \le d$,
$
\sum_{s=1}^d  \left( \lambda_{a_1}^{sk} \lambda_{x_sa_2}^{ij}
+ \lambda_{a_1}^{si} \lambda_{x_sa_2}^{jk}+\lambda_{a_1}^{sj} \lambda_{x_sa_2}^{ki} \right)=0.$
\end{enumerate}
\end{thm}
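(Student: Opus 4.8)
\emph{Proof proposal.} The plan is to obtain Theorem~\ref{main-result} by stitching together Propositions~\ref{main2}, \ref{main1} and~\ref{co-jac-formula}, which between them already contain all the analytic content; what is left is essentially bookkeeping of equivalences. I would first record the elementary observation underlying condition~(2): an element $X \in A \otimes A$ lies in $\mathcal{I} = \bigoplus_{i<j} k(x_i \otimes x_j - x_j \otimes x_i)$ if and only if $X = \sum_{1 \le i,j \le d} \lambda^{ij} x_i \otimes x_j$ for some skew-symmetric matrix $(\lambda^{ij}) \in M_d(k)$; one passes between the two descriptions by letting $\lambda^{ij} = -\lambda^{ji}$ be the coefficient of $x_i \otimes x_j - x_j \otimes x_i$ for $i<j$ and setting $\lambda^{ii}=0$. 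Thus ``$I(a) \in \mathcal{I}$ for all $a$'' and the matrix formulation in~(2) are literally the same statement.

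For the forward implication, suppose $q$ gives a co-Poisson coalgebra structure on $A$, so $q$ is skew-symmetric and satisfies the co-Leibniz rule and the co-Jacobi identity. Define $I \colon A \to A \otimes A$ on the basis $\mathcal{H}(A)$ by $I(a) = (-1)^{|a_2|} q(a_1) \Delta(a_2)$ and extend linearly. Proposition~\ref{main1} then gives $I(a) \in \mathcal{I}$ for all $a$, which by the observation above is exactly~(2); Proposition~\ref{main2} converts the defining relation $I(a) = (-1)^{|a_2|} q(a_1) \Delta(a_2)$ into $q(a) = I(a_1) \Delta(a_2)$, which is~(1); and, writing $I$ via the skew-symmetric matrices $(\lambda_a^{ij})$, Proposition~\ref{co-jac-formula} says that the co-Jacobi identity for $q$ is equivalent to~(3), so (3) holds.

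For the converse, suppose $I$ satisfies (1)--(3). By~(1) and Proposition~\ref{main2}, $I(a) = (-1)^{|a_2|} q(a_1) \Delta(a_2)$ for all $a \in A$; by~(2) this element lies in $\mathcal{I}$, so Proposition~\ref{main1} shows that $q$ is skew-symmetric and obeys the co-Leibniz rule, and then Proposition~\ref{co-jac-formula} together with~(3) yields the co-Jacobi identity. Hence $q$ defines a co-Poisson coalgebra structure on $A$, completing the proof.

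The point to watch--and the only place a careless argument could slip--is that the map $I$ produced in the forward direction must be the same $I$ that is required to satisfy all three conditions; this is exactly what Proposition~\ref{main2} guarantees, since its reciprocity lets one pass freely between the formula for $I$ in terms of $q$ and the formula for $q$ in terms of $I$. Since $A$ is generated by primitive elements and all the cited propositions are established on the monic-monomial basis $\mathcal{H}(A)$, extending the conditions by $k$-linearity to all of $A$ presents no difficulty, so I do not expect any genuine obstacle here.
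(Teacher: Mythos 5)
Your proposal is correct and follows essentially the same route as the paper: the theorem is introduced there with ``In summary, we have the following result,'' i.e.\ it is exactly the assembly of Propositions~\ref{main2}, \ref{main1} and~\ref{co-jac-formula} that you carry out, with the reciprocity of Proposition~\ref{main2} ensuring the same $I$ serves in both directions. No gaps.
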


The following proposition shows that the co-Jacobi identity holds trivially in two variables case.
So, the co-Poisson coalgebra structures on $k[x, y]$ are given by the linear maps $I: k[x, y] \to k(x \otimes y - y \otimes x)$.

\begin{prop} \label{2-variable-case}
Let $A=k[x,y]$. Then there is an one-to-one correspondence between
the co-Poisson coalgebra structures on $A$ and the linear maps $A \to
\mathcal{I}= k(x \otimes y - y \otimes x)$, given by
$$
(q: A \to A \otimes A)  \mapsto (I: A \to \mathcal{I}, a \mapsto
(-1)^{|a_2|} q(a_1) \Delta(a_2),$$
with the inverse map
$(I: A \to \mathcal{I})  \mapsto (q: A \to A \otimes A, a \mapsto
I(a_1) \Delta(a_2)).$
\end{prop}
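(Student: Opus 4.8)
The plan is to assemble this statement directly from the two reciprocity/characterization results already in hand, together with the observation that the co-Jacobi identity imposes no condition when there are only two variables. Recall that a co-Poisson coalgebra structure on $A$ is, by Definition \ref{defn-copoisson-coalg}, a linear map $q : A \to A \otimes A$ that is skew-symmetric, satisfies the co-Jacobi identity, and satisfies the co-Leibniz rule. In the present situation $P(A) = kx \oplus ky$, so the space $\mathcal{I}$ of Proposition \ref{main1} is exactly $k(x \otimes y - y \otimes x)$, matching the $\mathcal{I}$ in the statement.

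First I would invoke Proposition \ref{main1} (with $d = 2$): a linear map $q$ is skew-symmetric and satisfies the co-Leibniz rule if and only if the associated map $I(a) = (-1)^{|a_2|} q(a_1) \Delta(a_2)$ takes values in $\mathcal{I}$. Next, Proposition \ref{main2} says that the assignments $q \mapsto I$, via $I(a) = (-1)^{|a_2|} q(a_1)\Delta(a_2)$, and $I \mapsto q$, via $q(a) = I(a_1)\Delta(a_2)$, are mutually inverse bijections on the set of \emph{all} linear maps $A \to A \otimes A$. Combining these, I get a bijection between the set of skew-symmetric maps $q$ satisfying the co-Leibniz rule and the set of linear maps $I : A \to \mathcal{I}$, with the two explicit formulas of the statement as inverse to each other. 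Note that condition (2) of Theorem \ref{main-result} is automatic here: $\mathcal{I} = k(x \otimes y - y \otimes x)$ is one-dimensional and its spanning tensor already has skew-symmetric coefficient matrix, so every linear map $A \to \mathcal{I}$ meets that requirement with no further argument.

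The only remaining point is to see that, for $d = 2$, every such $q$ automatically satisfies the co-Jacobi identity, so that the co-Leibniz-plus-skew-symmetric maps are precisely the co-Poisson structures. By Proposition \ref{co-jac-formula} (equivalently condition (3) of Theorem \ref{main-result}), the co-Jacobi identity for $q$ amounts to the family of equations \eqref{co-Jaco-id} ranging over all $1 \le i < j < k \le d$ and all $a$. Since $d = 2$, there is no triple of indices with $1 \le i < j < k \le 2$, so this family is empty and the co-Jacobi identity holds vacuously. Hence each $I : A \to \mathcal{I}$ produces a genuine co-Poisson structure $q$, and the correspondence is exactly the one described. There is essentially no hard step; the only thing requiring care is lining up the spaces $\mathcal{I}$ and the defining formulas of Propositions \ref{main1}, \ref{main2} and \ref{co-jac-formula} so that the vacuity of \eqref{co-Jaco-id} can be read off cleanly.
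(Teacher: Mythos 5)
Your proposal is correct, and it handles the main point — the co-Jacobi identity — differently from the paper. Both you and the paper treat the bijection itself the same way, by combining the reciprocity of Proposition \ref{main2} with the characterization of skew-symmetry plus co-Leibniz in Proposition \ref{main1} (the paper opens its proof with exactly this reduction: ``we only need to show the co-Jacobi identity always holds''). Where you diverge is in how co-Jacobi is dispatched: the paper verifies it by a direct computation, expanding $\circlearrowleft (q \otimes 1)q(a)$ for $q(a)=l_{a_1}(xa_2 \otimes ya_3 - ya_2 \otimes xa_3)$ and exhibiting the cancellations explicitly, whereas you invoke Proposition \ref{co-jac-formula} (equivalently condition (3) of Theorem \ref{main-result}) and observe that for $d=2$ the family of conditions \eqref{co-Jaco-id}, indexed by $1 \le i<j<k \le 2$, is empty. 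Your route is shorter and makes transparent why two variables are special; its only hidden dependence is that Proposition \ref{co-jac-formula} is stated with strictly increasing indices while its proof produces the conditions $c_{ijk}=0$ for \emph{all} triples, so the vacuity argument tacitly uses that $c_{ijk}$ vanishes automatically when indices repeat (which does hold, since the two surviving terms cancel by skew-symmetry of the matrices $(\lambda^{ij}_a)$); the paper's hands-on computation in the two-variable case is, in effect, exactly this cancellation carried out explicitly. So your argument is sound and is a legitimate streamlining, at the cost of leaning on the precise formulation of the general criterion rather than re-verifying it in the special case.
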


\begin{proof}
We only need to show the co-Jacobi identity always holds in this
case. Assume $I(a)=l_a(x \otimes y -y \otimes x)$ with $l_a \in k$.
For any $a \in A$,
$$q(a)=I(a_1) \Delta(a_2)=l_{a_1} (x \otimes y -y \otimes x) (a_2 \otimes a_3)=l_{a_1}
(xa_2 \otimes y a_3-ya_2 \otimes xa_3).$$
Then
$\circlearrowleft (q \otimes 1) q(a)= \circlearrowleft l_{a_1}
\left(q(xa_2) \otimes ya_3 -q(ya_2) \otimes xa_3\right).$ Now
\begin{align*}
&\circlearrowleft l_{a_1} q(xa_2) \otimes ya_3 \\
=& \circlearrowleft l_{a_1} \left( I(xa_2) \Delta (a_3) + I(a_2) \Delta (xa_3)\right) \otimes ya_4\\
=&\circlearrowleft l_{a_1} l_{xa_2} (x \otimes y -y \otimes x) (a_3 \otimes a_4)\otimes ya_5\\
& + \circlearrowleft l_{a_1} l_{a_2} (x \otimes y -y \otimes x) (xa_3 \otimes a_4 + a_3 \otimes xa_4)\otimes ya_5\\
=&\circlearrowleft l_{a_1} l_{xa_2} (xa_3 \otimes ya_4 \otimes ya_5 -ya_3 \otimes xa_4 \otimes ya_5)\\
&+\circlearrowleft l_{a_1} l_{a_2} ( x^2a_3 \otimes ya_4 \otimes ya_5 + xa_3 \otimes xya_4 \otimes ya_5\\
&-xya_3 \otimes xa_4 \otimes ya_5-ya_3 \otimes x^2a_4 \otimes ya_5)\\
=&\circlearrowleft l_{a_1} l_{a_2} (xa_3 \otimes xya_4 \otimes
ya_5-xya_3 \otimes xa_4 \otimes ya_5).
\end{align*}
Similarly,
$$\circlearrowleft l_{a_1} q(ya_2) \otimes xa_3=\circlearrowleft l_{a_1} l_{a_2}
(ya_3 \otimes xya_4 \otimes xa_5-xya_3 \otimes ya_4 \otimes xa_5).
$$
Thus $\circlearrowleft (q \otimes 1) q(a)=0$ for all $a \in
A=k[x,y]$.
\end{proof}

\section{(Co-)Poisson Hopf structures on $k[x_1, x_2,\cdots, x_d]$}

\subsection{Poisson Hopf structures}

As proved in \cite[Proposition 1.8]{LPV}, any Poisson structure on the polynomial algebra $A=k[x_1, x_2,\cdots, x_d]$ is
given by $\{x_i,x_j\}=f_{ij}$ where $\{f_{ij}\}_{d \times d}$ is a skew-symmetric matrix over $A$ such that
for all $1 \le i <j< k \le d$,
\begin{equation}\label{poi-alg-str}
\sum_{l=1}^d \left( f_{lk} \frac{\partial f_{ij}}{ \partial x_l} + f_{li}
\frac{\partial f_{jk}}{ \partial x_l}+ f_{lj} \frac{\partial f_{ki}}{ \partial x_l} \right)=0.
\end{equation}

If $A=U(\mathfrak{g})=k[x_1, x_2, \cdots, x_d]$ is viewed as a Hopf algebra, then the Poisson algebra structures on $A$ can be described in a form dual to
Theorem \ref{main-result}. The following is a reciprocity law for linear maps $ A \otimes A \to A$.

\begin{lem}\label{main2'}
Let $p: A \otimes A \to A$ and $J: A \otimes A \to A$  be two linear maps. Then $p(a \otimes b)=J(a_1 \otimes b_1) a_2b_2$ for all $a,b \in A$ if and only if
$J(a \otimes b)=(-1)^{|a_2|+|b_2|} p(a_1 \otimes b_1)a_2b_2$ for all $a,b \in A$.
\end{lem}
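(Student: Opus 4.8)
The plan is to treat this lemma as the exact dual of Proposition~\ref{main2}, with the comultiplications on the target replaced by the multiplication of $A$, and to reuse the same combinatorial input. The only feature of $A=k[x_1,\dots,x_d]$ that is needed is that, since $A$ is generated by the primitive elements $x_i$, its antipode is the graded map $S(a)=(-1)^{|a|}a$ on monomials and is the convolution inverse of $\id_A$; concretely $\sum_{(a)}(-1)^{|a_2|}a_1a_2=\varepsilon(a)1$ for every $a\in\mathcal H(A)$ (this is precisely the fact underlying \eqref{delta-a1-a2} and \eqref{delta-a}), which rewrites, in the notation $\Delta^{(2)}(a)=\sum a_1\otimes a_2\otimes a_3$, as
$$\sum_{(a)}(-1)^{|a_2|}\,a_1\otimes a_2a_3 \;=\; a\otimes 1 \;=\;\sum_{(a)}(-1)^{|a_3|}\,a_1\otimes a_2a_3 .$$
These are the two--variable counterparts of the identities used in Proposition~\ref{main2}.

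First I would dispose of the base case: for $a=b=1$ both asserted identities reduce to $p(1\otimes1)=J(1\otimes1)$, so the equivalence is trivial there. For the general case the cleanest route is to observe that the two $k$-linear endomorphisms of $\Hom_k(A\otimes A,A)$ given by
$$\Phi(f)(a\otimes b)=f(a_1\otimes b_1)\,a_2b_2,\qquad
\Psi(f)(a\otimes b)=(-1)^{|a_2|+|b_2|}f(a_1\otimes b_1)\,a_2b_2$$
are mutually inverse, since the lemma is exactly the statement that $\Phi(J)=p$ iff $\Psi(p)=J$. To see $\Psi\circ\Phi=\id$, substitute and relabel the iterated coproduct by coassociativity: one gets $\Psi(\Phi(f))(a\otimes b)=\sum(-1)^{|a_3|+|b_3|}f(a_1\otimes b_1)\,a_2b_2a_3b_3=\sum(-1)^{|a_3|+|b_3|}f(a_1\otimes b_1)(a_2a_3)(b_2b_3)$ by commutativity of $A$, and applying the displayed identity once in the $a$-variables (collapsing $\sum(-1)^{|a_3|}a_1\otimes a_2a_3$ to $a\otimes1$) and once in the $b$-variables reduces this to $f(a\otimes b)$. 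The computation $\Phi\circ\Psi=\id$ is symmetric, using the other form of the identity. Alternatively one can run an induction on the total degree $|a|+|b|$, writing $a=a'x$ or $b=b'y$ with $x,y\in P(A)$ and separating in $\Delta^{(2)}$ the Sweedler terms whose relevant factor is $1$ (which reproduce the desired formula) from those where it is not (annihilated by the degree~$\ge1$ case of $\sum(-1)^{|a_2|}a_1a_2=0$), exactly as in the proof of Proposition~\ref{main2}.

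The step I expect to demand the most care is the sign bookkeeping under coassociativity: after the substitution $J(a_1\otimes b_1)=(-1)^{|(a_1)_2|+|(b_1)_2|}p(\cdots)$ (or its analogue for $p$) one must verify that, once the iterated coproduct is relabelled to $a_1\otimes a_2\otimes a_3$ and $b_1\otimes b_2\otimes b_3$, the exponent lands on precisely the tensor factors that are subsequently multiplied away, so that the antipode identity applies verbatim. Together with the necessary use of commutativity of $A$ to pass from $a_2b_2a_3b_3$ to $(a_2a_3)(b_2b_3)$, this is essentially the whole content of the argument, everything else being formal.
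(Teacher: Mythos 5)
Your proof is correct, and its main route is genuinely different from the paper's. The paper proves Lemma~\ref{main2'} by literally repeating the argument of Proposition~\ref{main2}: two separate inductions on degree, splitting the Sweedler terms according to whether the relevant tensor factor equals $1$ and invoking \eqref{delta-a1-a2} and \eqref{delta-a}. You instead package the two formulas as operators $\Phi(f)(a\otimes b)=f(a_1\otimes b_1)a_2b_2$ and $\Psi(f)(a\otimes b)=(-1)^{|a_2|+|b_2|}f(a_1\otimes b_1)a_2b_2$ on $\Hom_k(A\otimes A,A)$ and check $\Psi\Phi=\Phi\Psi=\id$ directly; after coassociative relabelling the sign indeed lands on the outer (resp.\ middle) legs, commutativity regroups $a_2b_2a_3b_3=(a_2a_3)(b_2b_3)$, and the antipode identity $\sum(-1)^{|a_2|}a_1a_2=\varepsilon(a)1$ (whose image under $\Delta$ is exactly \eqref{delta-a1-a2}/\eqref{delta-a}) collapses each variable separately, so both compositions give $f(a\otimes b)$ — all of this checks out. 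So the combinatorial engine is the same as the paper's, but your argument is non-inductive and more conceptual: $\Psi$ is right convolution by $\mu\circ(S\otimes S)$, the convolution inverse of $\mu$ in $\Hom_k(A\otimes A,A)$, so the two assignments are mutually inverse bijections of the whole Hom-space, and the statement is seen to hold in any commutative Hopf algebra once the sign $(-1)^{|a_2|+|b_2|}$ is read as the antipode; the paper's induction buys nothing beyond matching the proof of Proposition~\ref{main2} verbatim, and your closing remark that one could alternatively run that induction is exactly what the paper does.
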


\begin{proof}
Similar to that of Proposition \ref{main2}.
\end{proof}

\begin{prop}\label{poisson-structure}
Let $A=U(\mathfrak{g})=k[x_1,\cdots,x_d]$. Then a linear map
$p: A \otimes A \to A$ gives a Poisson structure on $A$ if and only if there is a linear map $J : A \otimes A \to A$ such that for all $a,b \in A$,
\begin{enumerate}
\item $p(a \otimes b)=J(a_1 \otimes b_1) a_2b_2.$
\item $J$ is skew-symmetric, and $J(a \otimes b)=0$ except both $a$ and $b$ are of degree $1$. 
\item The Jacobi identity holds for $J$.
\end{enumerate}
\end{prop}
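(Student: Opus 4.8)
The plan is to mirror the route used in the co-Poisson case, namely Proposition~\ref{main2} $\to$ Proposition~\ref{main1} $\to$ Proposition~\ref{co-jac-formula} $\to$ Theorem~\ref{main-result}, now with Lemma~\ref{main2'} playing the role of Proposition~\ref{main2}. The underlying dictionary is: skew-symmetry of $p$ corresponds to skew-symmetry of $J$; the biderivation (Leibniz) property of $p$ corresponds to $J$ being supported on degree-one elements; and the Jacobi identity for $p$ corresponds to condition~(3). Since \cite[Proposition 1.8]{LPV} already identifies every Poisson bracket on $A=k[x_1,\dots,x_d]$ with the unique biderivation $p(a\otimes b)=\sum_{i,j}f_{ij}\,\frac{\partial a}{\partial x_i}\frac{\partial b}{\partial x_j}$ attached to a skew matrix $(f_{ij})$ over $A$ satisfying \eqref{poi-alg-str}, the proposition is essentially a translation of that description into the comultiplicative language through the reciprocity law of Lemma~\ref{main2'}.

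For the forward direction, given a Poisson structure $p$ I would set $J(a\otimes b)=(-1)^{|a_2|+|b_2|}p(a_1\otimes b_1)a_2b_2$; Lemma~\ref{main2'} then yields~(1) at once, and interchanging the two tensor slots together with the commutativity of $A$ and the skew-symmetry of $p$ gives the skew-symmetry of $J$. To obtain the vanishing statement in~(2) I would argue by induction on $|a|+|b|$: the cases $|a|\le 1$ or $|b|\le 1$ other than $|a|=|b|=1$ are settled directly by $\{1,1\}=0$ and $\{x_i,1\}=\{1,x_j\}=0$, while the inductive step reduces $J(a'x\otimes b)$ (for a primitive $x$ and $a'\neq 1$) to $0$ by expanding the coproduct of $a'x$ and using that $p$ is a derivation in its first argument; concretely this is the computation that already shows $J(x_ix_j\otimes x_k)=\{x_ix_j,x_k\}-f_{ik}x_j-f_{jk}x_i=0$, and the second slot is symmetric. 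Finally the Jacobi identity for $p$ is, by \cite[Proposition 1.8]{LPV}, equivalent to \eqref{poi-alg-str}, which is condition~(3).

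For the converse, starting from $J$ as in~(1)--(2), the key computation is that when $J$ is supported on degree-one elements the reconstructed map $p(a\otimes b)=J(a_1\otimes b_1)a_2b_2$ equals $\sum_{i,j}J(x_i\otimes x_j)\,\frac{\partial a}{\partial x_i}\frac{\partial b}{\partial x_j}$: in the standard monomial basis the term $x_i\otimes(a/x_i)$ of $\Delta(a)$ occurs with coefficient the exponent $n_i$ of $x_i$ in $a$, and $n_i(a/x_i)=\frac{\partial a}{\partial x_i}$, while all other coproduct terms are annihilated by~(2). Hence $p$ is the biderivation attached to the skew matrix $f_{ij}=J(x_i\otimes x_j)$, so it is automatically skew-symmetric and satisfies the Leibniz rule, and by \cite[Proposition 1.8]{LPV} it is a Poisson bracket exactly when \eqref{poi-alg-str}, i.e. the Jacobi identity for $J$, holds.

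I expect the main obstacle to be purely combinatorial bookkeeping: tracking the signs $(-1)^{|a_2|+|b_2|}$ through the iterated coproduct and verifying cleanly the identification of $J(a_1\otimes b_1)a_2b_2$ with the partial-derivative formula. One should also pin down the precise meaning of ``the Jacobi identity holds for $J$'': either as the assertion that the reconstructed $p$ satisfies the Jacobi identity, or, spelled out via the reciprocity law exactly as in Proposition~\ref{co-jac-formula} (an explicit identity on the coefficients of $J$ analogous to \eqref{co-Jaco-id}); both formulations are equivalent to \eqref{poi-alg-str} and either makes the proof go through. If a self-contained argument is preferred to invoking \cite[Proposition 1.8]{LPV}, the entire scheme can be rerun as the exact dual of Propositions~\ref{main1} and~\ref{co-jac-formula}, with a dual of Lemma~\ref{turn} supplying the auxiliary identity.
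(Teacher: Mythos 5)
Your proposal is correct and follows essentially the same route as the paper: the paper gives no detailed proof, simply observing that via the reciprocity law of Lemma \ref{main2'} the proposition is a restatement of \cite[Proposition 1.8]{LPV}, with condition (1) becoming the formula $\{f,g\}=\sum_{i,j}\frac{\partial f}{\partial x_i}\frac{\partial g}{\partial x_j}\{x_i,x_j\}$ and conditions (2)--(3) holding for $J(a\otimes b)=(-1)^{|a_2|+|b_2|}p(a_1\otimes b_1)a_2b_2$. Your write-up supplies the same translation with the bookkeeping made explicit (the coproduct-coefficient identification and the vanishing of $J$ outside degree $(1,1)$), and the only blemish is the loose phrasing of the base cases of your induction, which your skew-symmetry remark and the displayed degree-two computation already repair.
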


Actually, Proposition \ref{poisson-structure} is exactly \cite[Proposition 1.8]{LPV}. If $p: A \otimes A \to A$ is a Poisson algebra structure on $A$, and $J(a \otimes b)= (-1)^{|a_2|+|b_2|} p(a_1 \otimes b_1) a_2b_2$,
then  $J$ satisfies conditions (2) and (3) in Proposition \ref{poisson-structure}. In this case, condition (1) is the same as
$$\{f,g\}= \sum _{1 \le i,j \le d}\frac{\partial f}{\partial x_i}  \frac{\partial g}{\partial x_j} \{x_i,x_j\}.$$

The Poisson Hopf structures on $A$ are classified in the following
proposition. They are exactly linear Poisson structures on $A$.

\begin{prop}\label{poisson-hopf-str}
Any Poisson Hopf structure on $A= k[x_1, x_2,\cdots, x_d]$ is given by
$$\{ x_i ,x_j\}= \sum_{l=1}^d \lambda^{ij} _l x_l \qquad (1 \leq i, j \leq d),$$
where $\lambda^{ij}_l = -\lambda^{ji}_l$,
subject to the relations, for all $1 \le i <j<k \le d$ and all $1
\le s \le d$,
$$ \sum_{l=1}^d \left(  \lambda^{ij}_l \lambda^{lk}_s +     \lambda^{jk}_l \lambda^{li}_s
+  \lambda^{ki}_l \lambda^{lj}_s   \right)=0.$$
\end{prop}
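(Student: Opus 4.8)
The plan is to combine the classification of Poisson algebra structures on $A=k[x_1,\dots,x_d]$ given in Proposition \ref{poisson-structure} (namely \cite[Proposition 1.8]{LPV}) with the Hopf compatibility condition \eqref{poisson-hopf}, evaluated only on the generators $x_1,\dots,x_d$. A Poisson Hopf structure $p=\{-,-\}$ on $A$ is in particular a Poisson structure, so by Proposition \ref{poisson-structure} it is determined by a skew-symmetric matrix $(f_{ij})_{d\times d}$ over $A$ with $f_{ij}=\{x_i,x_j\}$, subject to the differential identity \eqref{poi-alg-str}. Now I would plug $a=x_i$, $b=x_j$ into \eqref{poisson-hopf}. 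Since $x_i$ and $x_j$ are primitive and $\{-,1\}=\{1,-\}=0$ in any Poisson algebra, the only surviving terms on the right-hand side are the one with $(a_1,a_2,b_1,b_2)=(x_i,1,x_j,1)$ in the first sum and the one with $(1,x_i,1,x_j)$ in the second, so \eqref{poisson-hopf} reduces to $\Delta(f_{ij})=f_{ij}\otimes 1+1\otimes f_{ij}$. Hence every $f_{ij}$ is a primitive element of $A$.

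Recall that the primitives of the polynomial Hopf algebra $A$ (with each $x_l$ primitive) are exactly the linear forms, $P(A)=\bigoplus_{l=1}^d kx_l$. Therefore $f_{ij}=\sum_{l=1}^d\lambda^{ij}_l x_l$ for scalars $\lambda^{ij}_l\in k$, and skew-symmetry of the bracket gives $\lambda^{ij}_l=-\lambda^{ji}_l$; this is precisely the assertion that $\{-,-\}$ is a linear Poisson structure. It then remains to rewrite \eqref{poi-alg-str} in terms of the $\lambda$'s: since $\partial f_{ij}/\partial x_l=\lambda^{ij}_l$, substituting $f_{ij}=\sum_s\lambda^{ij}_s x_s$ into \eqref{poi-alg-str} and comparing the coefficient of $x_s$ on both sides yields, for all $1\le i<j<k\le d$ and all $1\le s\le d$,
\[
\sum_{l=1}^d\bigl(\lambda^{ij}_l\lambda^{lk}_s+\lambda^{jk}_l\lambda^{li}_s+\lambda^{ki}_l\lambda^{lj}_s\bigr)=0,
\]
which are the stated relations.

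Conversely, starting from scalars $\lambda^{ij}_l=-\lambda^{ji}_l$ satisfying these relations and setting $f_{ij}=\sum_l\lambda^{ij}_l x_l$, the computation above run backwards shows \eqref{poi-alg-str} holds, so Proposition \ref{poisson-structure} produces a Poisson structure $\{-,-\}$ on $A$; and \eqref{poisson-hopf} holds on each pair $(x_i,x_j)$ by the same collapse (both sides equal $f_{ij}\otimes 1+1\otimes f_{ij}$). Since $A$ is commutative, \eqref{poisson-hopf} for all $a,b$ is equivalent to $\Delta\colon A\to A\otimes A$ being a Poisson algebra morphism, and the defining identity $\Delta(\{u,v\})=\{\Delta(u),\Delta(v)\}_{A\otimes A}$ of a Poisson morphism is multiplicative in each of $u$ and $v$ (by the Leibniz rule on both sides), hence it is enough to verify it when $u,v$ run over the algebra generators $x_1,\dots,x_d$; thus $\{-,-\}$ is a Poisson Hopf structure. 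I expect the only genuinely non-routine ingredients to be the identification $P(A)=\bigoplus_l kx_l$ and this reduction of the Poisson-morphism condition to the generators; the rest is bookkeeping with \eqref{poi-alg-str}.
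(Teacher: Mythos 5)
Your proof is correct and takes essentially the same route as the paper's: primitivity of $\{x_i,x_j\}$ forced by \eqref{poisson-hopf} gives the linear bracket, \eqref{poi-alg-str} translates into the quadratic relations on the $\lambda^{ij}_l$, and the converse is verified on generators. Your explicit reduction to generators (both sides of the compatibility being Leibniz in each argument, since $\Delta$ is a Poisson morphism into $A\otimes A$ when $A$ is commutative) is precisely what the paper's ``can be done by induction'' leaves implicit.
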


\begin{proof}
Suppose $\{-,-\}$ is a Poisson Hopf structure on $A$. Then, by \eqref{poisson-hopf},
$$\Delta \left( \{x_i ,x_j\} \right) = 1 \otimes \{x_i,x_j\} +\{x_i ,x_j\} \otimes 1,$$
that is, $\{x_i ,x_j\}$ is a primitive element of $A$. Hence $\{ x_i
,x_j\}= \sum_{l=1}^d \lambda^{ij} _l x_l$ for some $\lambda^{ij} _l
\in k$ such that  $\lambda^{ij} _l= -\lambda^{ji} _l$ for all $l$.
Then \eqref{poi-alg-str} is equivalent to, for all $1 \le i < j < k \le d$
and all $1 \le s \le d$,
$$ \sum_{l=1}^d \left(  \lambda^{ij}_l \lambda^{lk}_s + \lambda^{jk}_l \lambda^{li}_s
+ \lambda^{ki}_l \lambda^{lj}_s \right)=0.$$

Conversely, any such a Poisson algebra structure is in fact a Poisson Hopf structure on $A$.
We need to check that $\Delta \left( \{a,b\} \right)=a_1 b_1 \otimes \{a_2,b_2\} +
\{a_1,b_1\} \otimes a_2 b_2$ for all $a,b \in \mathcal{H}(A)$, which can be done by induction.
\end{proof}

\subsection{Co-Poisson Hopf structures}

Next we discuss co-Poisson Hopf structures on $A$. We fix some
notations here. For any $a=x_1^{n_1} x_2 ^{n_2} \cdots x_d ^{n_d}
\in \mathcal{H}(A)$, we denote by $a!=n_1 ! \cdots n_d !$ and
$a^{(i)}=n_i$ for $1 \le i \le d$. For any $b= x_1^{m_1} x_2 ^{m_2}
\cdots x_d ^{m_d} \in \mathcal{H}(A)$, if $b \mid a$, we denote by
\[ {a \choose b}= {n_1 \choose m_1} {n_2 \choose m_2} \cdots {n_d \choose m_d};\]
if $b \nmid a$, we set ${a \choose b}=0$.

\begin{prop}\label{main3}
Let $A=U(\mathfrak{g})=k[x_1, x_2,\cdots,x_d]$. Then a linear map
$q: A \to A \otimes A$ gives a co-Poisson Hopf algebra structure on $A$ if and only if there is a linear map $I : A \to  A \otimes A$ such that for all  $a \in \mathcal{H}(A)$,
\begin{enumerate}
\item $q(a)=I(a_1) \Delta(a_2)$.
\item For all $1 \le s \le d$, $I(a)=0$ if $a \neq x_s$ and $I(x_s)= \sum _{1 \le i,j \le d} \lambda_s^{ij} x_i \otimes x_j \in \mathcal{I}$ with $(\lambda_s^{ij})_{d \times d} \in M_d(k)$
skew-symmetric.
\item For all $0 \le i < j< k \le d$, $1 \le s \le d$, $ \sum_{l=1}^d \left( \lambda_s^{lk} \lambda_l^{ij}+\lambda_s^{li} \lambda_l^{jk}
+\lambda_s^{lj} \lambda_l^{ki} \right)=0$.
\end{enumerate}
\end{prop}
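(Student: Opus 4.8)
The plan is to build on Theorem \ref{main-result}, which already characterises co-Poisson \emph{coalgebra} structures on $A$ in terms of a linear map $I$, and then impose the extra condition \eqref{q(ab)}, i.e.\ that $q$ be a $\Delta$-derivation. So the first step is to assume $q$ comes from a co-Poisson Hopf structure, invoke Theorem \ref{main-result} to get the associated map $I(a)=(-1)^{|a_2|}q(a_1)\Delta(a_2)\in\mathcal I$, and translate the $\Delta$-derivation property of $q$ into a statement about $I$. Since $I$ is obtained from $q$ by the reciprocity law of Proposition \ref{main2}, and since $\Delta$ itself is multiplicative, I expect $q(ab)=q(a)\Delta(b)+\Delta(a)q(b)$ to be equivalent to the corresponding identity $I(ab)=I(a)\varepsilon(b)+\varepsilon(a)I(b)$ for $I$; that is, $I$ should vanish on products of two or more primitive generators and on $1$. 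The cleanest route is to check directly that $q(a)=I(a_1)\Delta(a_2)$ is a $\Delta$-derivation whenever $I$ is ``primitive-supported'' in this sense, using that $\Delta$ is an algebra map and $\Delta(a)=\Delta(a_1)\otimes 1\cdots$ unwinds nicely on monomials; conversely, evaluating \eqref{q(ab)} on pairs of generators $x_s,x_t$ forces $I$ to be supported on degree-one elements.

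Concretely, once we know $I$ is determined by the values $I(x_s)=\sum_{i,j}\lambda_s^{ij}x_i\otimes x_j\in\mathcal I$ with $(\lambda_s^{ij})$ skew-symmetric, conditions (1) and (2) of the proposition are in place, and what remains is condition (3), the co-Jacobi identity. Here I would simply specialise Proposition \ref{co-jac-formula}: its identity \eqref{co-Jaco-id} must hold for all $a\in\mathcal H(A)$, but with $I$ now supported only in degree one, the coefficients $\lambda_{a_1}^{sk}$ and $\lambda_{x_sa_2}^{ij}$ vanish unless $a_1$ (resp.\ $x_sa_2$) is a single variable. Working out which terms survive — essentially $a$ must be a single variable $x_t$, or the contributions telescope trivially — reduces \eqref{co-Jaco-id} to the finite system $\sum_{l}(\lambda_s^{lk}\lambda_l^{ij}+\lambda_s^{li}\lambda_l^{jk}+\lambda_s^{lj}\lambda_l^{ki})=0$ for $1\le i<j<k\le d$ and $1\le s\le d$. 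Conversely, if this finite system holds, then \eqref{co-Jaco-id} holds for all $a$ because every nonvanishing term is captured by the degree-one case.

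The main obstacle, I expect, is the bookkeeping in both directions of the degree-reduction: showing that the $\Delta$-derivation condition on $q$ is \emph{exactly} equivalent to $I$ being supported on the primitive generators, and then showing that the full co-Jacobi identity \eqref{co-Jaco-id} over all monomials $a$ collapses to the single finite system in condition (3). For the first, the subtlety is that $q$ being a $\Delta$-derivation is a condition for all $a,b$, and one has to verify that the reciprocity-law passage $q\leftrightarrow I$ converts it faithfully — here I would argue by induction on degree, using \eqref{delta-a1-a2} and \eqref{delta-a} as in the proof of Proposition \ref{main2}, together with the fact that both $\Delta$ and $\mu$ respect the grading by the number of generators. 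For the second, the point is that in \eqref{co-Jaco-id} the factor $\lambda_{a_1}^{sk}$ is nonzero only when $a_1=x_{s'}$ for some $s'$, which pins down $a_2$ up to a constant, and similarly $\lambda_{x_sa_2}^{ij}\ne 0$ forces $x_sa_2$ to be a single variable, hence $a_2=1$ and $a=x_t$; so the sum over monomials $a$ genuinely has content only in degree one. Assembling these two reductions and citing Theorem \ref{main-result}, Proposition \ref{co-jac-formula}, and Proposition \ref{main2} then gives the stated ``if and only if''.
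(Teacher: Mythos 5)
Your proposal is correct and follows essentially the same route as the paper: use the reciprocity law (Proposition \ref{main2}) and Theorem \ref{main-result}, show by induction on degree that the $\Delta$-derivation condition \eqref{q(ab)} is equivalent to $I$ vanishing on $1$ and on monomials of degree $\ge 2$ (the paper phrases this via $q(a)=\sum_l\binom{a}{x_l}I(x_l)\Delta(a/x_l)$), and then observe that \eqref{co-Jaco-id} collapses to the finite system in condition (3) because its terms can only be nonzero when $a$ is a single variable. No substantive difference from the paper's argument.
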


\begin{proof}
``$\Rightarrow$'' Let $I: A \to A \otimes A$ be the map $I(a)
= (-1)^{|a_2|}q(a_1) \Delta (a_2)$ for all $a \in A$. Then, by Proposition \ref{main2}, $q(a)=I(a_1) \Delta(a_2)$, i.e., (1) holds.

Since $q(ab)=q(a) \Delta (b)+ \Delta(a)q(b)$ for
all $a ,b \in A$, $I(1)=q(1)=0$. Thus $q(x_i)=I(x_i)$ for all $1 \le
i \le d$. Then, by induction and \eqref{q(ab)}, for any $a \in
\mathcal{H}(A)$,
\[ q(a)= \sum_{l=1}^d {a \choose x_l} I(x_l) \Delta \left(  \frac{a}{x_l}\right).\]
If $a \in \mathcal{H}(A)$ with $\deg a =2$, then $I(a)=0$ by
$q(a)=I(a_1) \Delta(a_2)$. By induction, we see $I(a)=0$ for any $a
\in \mathcal{H}(A)$ of degree $\geq 3$. By Proposition \ref{main1}, (2) holds.

In this case, the equation \eqref{co-Jaco-id} is equivalent to
$ \sum_{l=1}^d \left( \lambda_s^{lk} \lambda_l^{ij}+\lambda_s^{li} \lambda_l^{jk}
+\lambda_s^{lj} \lambda_l^{ki} \right)=0,$ i.e., (3) holds.

``$\Leftarrow$'' If $I(a)=0$ for all $x_i \neq a \in
\mathcal{H}(A)$, then $q(a)=I(a_1) \Delta(a_2)$ becomes
\[ q(a)= \sum_{l=1}^d {a \choose x_l} I(x_l) \Delta \left(  \frac{a}{x_l}\right).\]
Now it is easy to check  $q(ab)=q(a) \Delta (b)+ \Delta(a)q(b)$ for all $a,b \in \mathcal{H}(A)$.
\end{proof}

%

%

\begin{ex} Let $A=k[x,y]$. Then there is an one-to-one correspondence between
the co-Poisson Hopf structures on $A$ and the set $\{(I(x), I(y)) \mid I(x), I(y) \in k(x \otimes y -y \otimes x)\}$, given by
\begin{align*}
 q & \mapsto (q(x),q(y)), \, \textrm{and}\\
(I(x),I(y)) & \mapsto q: x^ny^m \mapsto nI(x) \Delta(x^{n-1}y^m) + mI(y)
\Delta(x^ny^{m-1}).
\end{align*}
\end{ex}

\begin{ex} Let $A=k[x_1,x_2,x_3]$. Then any Poisson Hopf structure on $A$ is given by
$$\{x_1,x_2\}= \lambda_{12}^1 x_1 + \lambda_{12}^2 x_2 + \lambda_{12}^3 x_3,$$
$$\{x_2,x_3\}= \lambda_{23}^1 x_1 + \lambda_{23}^2 x_2 + \lambda_{23}^3 x_3,$$
$$\{x_3,x_1\}= \lambda_{31}^1 x_1 + \lambda_{31}^2 x_2 + \lambda_{31}^3 x_3,$$
subject to the relations,
$$\lambda_{12}^1 \lambda_{31}^l +
\lambda_{23}^2 \lambda_{12}^l+\lambda_{31}^3 \lambda_{23}^l = \lambda_{12}^2 \lambda_{23}^l
+\lambda_{23}^3 \lambda_{31}^l + \lambda_{31}^1 \lambda_{12}^l, \,\,
\textrm{for all} \,\, 1 \le l \le 3.
$$

Any co-Poisson Hopf structure  on $A$ is given by
$$q(x_1) =\lambda_1^{12} (x_1 \otimes x_2 -x_2 \otimes x_1)+ \lambda_1^{23}
(x_2 \otimes x_3 -x_3 \otimes x_2) + \lambda_1^{31}
(x_3 \otimes x_1 -x_1 \otimes x_3),$$
$$q(x_2) =\lambda_2^{12} (x_1 \otimes x_2 -x_2 \otimes x_1)+ \lambda_2^{23}
(x_2 \otimes x_3 -x_3 \otimes x_2) + \lambda_2^{31} (x_3 \otimes x_1 -x_1 \otimes x_3),$$
$$q(x_3) =\lambda_3^{12} (x_1 \otimes x_2 -x_2 \otimes x_1)+ \lambda_3^{23}
(x_2 \otimes x_3 -x_3 \otimes x_2) + \lambda_3^{31} (x_3 \otimes x_1
-x_1 \otimes x_3),$$ subject to the relations,
$$ \lambda_k^{12} \lambda_1^{31}+\lambda_k^{23} \lambda_2^{12}+\lambda_k^{31} \lambda_3^{23}
=\lambda_k^{31} \lambda_1^{12}+\lambda_k^{12}
\lambda_2^{23}+\lambda_k^{23} \lambda_3^{31}, \,\, \textrm{for any}
\,\, 1 \le k \le 3.$$
\end{ex}

\subsection{Dual (co-)Poisson structures}
It is well-known that the dual algebra $A^*$ of the coalgebra $A=k[x_1, x_2,\cdots,x_d]$ is the algebra of formal divided power series (it is called Hurwitz series in \cite[Proposition 2.4]{Ke} in one variable), which is isomorphic to the
algebra of formal power series $k[[x_1, x_2,\cdots,x_d]]$. By Cartier-Gabriel-Kostant-Milnor-Moore Theorem (\cite[Theorem 8.1.5]{Sw} and \cite[$\S$ 6]{MM}),
the finite dual $A^\circ$ of the polynomial Hopf algebra $A=k[x_1, x_2,\cdots,x_d]$ is isomorphic to
$A \ltimes k^n$ as Hopf algebras,
where $k^n$ carries the additive
group structure (the group-like elements in the Hopf dual). Similar to \cite[Proposition 1.8]{LPV}, the following lemma holds.

\begin{lem}
Let $\tilde{A}=k[[x_1,\cdots,x_d]]$ be the algebra of formal power series. Any Poisson structure on  $\tilde{A}$ is given by
$\{x_i, x_j\} =f_{ij}$, where $(f_{ij})_{d \times d}$ is a skew-symmetric matrix over $\tilde{A}$ such that for all $1 \le i <j<k \le d$,
\[ \{ \{x_i,x_j\},x_k\}+ \{ \{x_j,x_k\},x_i\}+ \{ \{x_k,x_i\},x_j\}=0.\]
In this case, for all $f, g \in \tilde{A}$,
$ \{f,g\}= \sum_{i,j=1}^d \frac{ \partial f}{ \partial x_i} \frac{ \partial g}{ \partial x_j} f_{ij}.$
\end{lem}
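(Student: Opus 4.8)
The plan is to follow the proof of \cite[Proposition 1.8]{LPV} for the polynomial ring, the only genuinely new point being that $\tilde{A}=k[[x_1,\cdots,x_d]]$ is not finitely generated as a $k$-algebra, so one cannot immediately conclude that a derivation is determined by its values on the $x_i$. So the first step I would carry out is an auxiliary lemma: every $k$-derivation $D$ of $\tilde{A}$ satisfies $D=\sum_{i=1}^{d}D(x_i)\,\partial/\partial x_i$. To see this, set $E=D-\sum_i D(x_i)\,\partial/\partial x_i$, which is a derivation killing every $x_i$ and hence killing the subalgebra $k[x_1,\cdots,x_d]$; writing an arbitrary $f\in\tilde{A}$ as a polynomial plus a remainder $r_n\in\mathfrak{m}^n$ (where $\mathfrak{m}=(x_1,\cdots,x_d)$) and using that $\mathfrak{m}^n$ is generated as an ideal by the degree-$n$ monomials, one gets $E(f)=E(r_n)\in\mathfrak{m}^n$ for every $n$, whence $E(f)=0$ by $\bigcap_n\mathfrak{m}^n=0$.

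Given this lemma, the forward direction is quick. If $\{-,-\}$ is a Poisson bracket on $\tilde{A}$, I would put $f_{ij}=\{x_i,x_j\}$; skew-symmetry of the matrix is immediate, and the Jacobi identity of the bracket evaluated at $x_i,x_j,x_k$ is exactly the stated cyclic identity. Since $\{-,g\}$ and $\{x_i,-\}$ are $k$-derivations of $\tilde{A}$, applying the lemma to $\{-,g\}$ gives $\{f,g\}=\sum_i(\partial f/\partial x_i)\{x_i,g\}$, and applying it again to $\{x_i,-\}$ gives $\{x_i,g\}=\sum_j(\partial g/\partial x_j)f_{ij}$, so $\{f,g\}=\sum_{i,j}(\partial f/\partial x_i)(\partial g/\partial x_j)f_{ij}$.

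For the converse I would start from a skew-symmetric matrix $(f_{ij})$ over $\tilde{A}$ satisfying the cyclic identity and define the bracket by that same formula; the sum is finite, so it lands in $\tilde{A}$. Bilinearity is clear, skew-symmetry follows by swapping $i\leftrightarrow j$, and the Leibniz rule follows from $\partial(fh)/\partial x_i=(\partial f/\partial x_i)h+f\,(\partial h/\partial x_i)$. The real work is the Jacobi identity, and here I would use the standard fact that for \emph{any} bracket on a commutative algebra that is bilinear, skew-symmetric and satisfies the Leibniz rule, the Jacobiator $J(f,g,h)=\{\{f,g\},h\}+\{\{g,h\},f\}+\{\{h,f\},g\}$ is a $k$-derivation in each of its arguments. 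This is a purely algebraic identity: expanding $J(ff',g,h)$ by the Leibniz rule produces $f\,J(f',g,h)+J(f,g,h)\,f'$ together with cross terms that cancel in pairs by skew-symmetry, and cyclic symmetry of $J$ transfers this to the other two slots. Since by hypothesis $J$ vanishes on the generators $x_i,x_j,x_k$, the auxiliary lemma (used once in each argument) forces $J\equiv 0$. Finally the displayed formula for $\{f,g\}$ holds by construction in the converse and by the preceding paragraph in the forward direction.

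I expect the main obstacle to be precisely this auxiliary lemma: in \cite{LPV} the analogous step is trivial because a polynomial ring is generated by the $x_i$ as an algebra, whereas for $\tilde{A}$ one must genuinely invoke $\mathfrak{m}$-adic separatedness, i.e.\ that $\bigcap_n\mathfrak{m}^n=0$ together with the fact that a derivation maps $\mathfrak{m}^n$ into $\mathfrak{m}^n$ once it kills the generators. Everything else is a routine transcription of the polynomial-ring argument.
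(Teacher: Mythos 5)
Your proof is correct, and it follows exactly the route the paper intends: the paper gives no argument at all for this lemma, saying only that it holds ``similar to [LPV, Proposition 1.8]'', so your write-up is essentially the intended polynomial-ring proof transcribed to $\tilde{A}$. The one genuinely new ingredient you add --- the auxiliary lemma that every $k$-derivation $D$ of $k[[x_1,\dots,x_d]]$ equals $\sum_i D(x_i)\,\partial/\partial x_i$, proved via $D(\mathfrak{m}^n)\subseteq\mathfrak{m}^n$ for a derivation killing the $x_i$ together with $\bigcap_n\mathfrak{m}^n=0$ --- is precisely the point where the analogy with the polynomial case needs justification, and your argument for it is sound. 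The remaining steps (skew-symmetry, Leibniz, and the fact that the Jacobiator of a skew-symmetric biderivation is a derivation in each slot, hence vanishes once it vanishes on the $x_i$; for the generator case note the Jacobiator is alternating, so the $i<j<k$ identities suffice) are the standard LPV computation and are handled correctly.
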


\begin{thm} \label{main5} Suppose $\charac k =0$. Let $\tilde{A}=k[[x_1, x_2,\cdots,x_d]]$  be the algebra of formal power series
and $A=k[x_1, x_2,\cdots,x_d]$. Then there is an one-to-one correspondence between the Poisson algebra structures on $\tilde{A}$
and the co-Poisson coalgebra structures on $A$.
\end{thm}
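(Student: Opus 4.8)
The plan is to pass through the explicit ``structure-constant'' descriptions of the two sides. By Theorem~\ref{main-result} (which combines Propositions~\ref{main1}, \ref{main2} and~\ref{co-jac-formula}), a co-Poisson coalgebra structure $q$ on $A$ is the same datum as a linear map $I\colon A\to\mathcal I$, say $I(a)=\sum_{1\le i,j\le d}\lambda_a^{ij}\,x_i\otimes x_j$ with each $(\lambda_a^{ij})_{d\times d}$ skew-symmetric, subject to the cubic relation \eqref{co-Jaco-id}, the passage being $q(a)=I(a_1)\Delta(a_2)$. On the other side, by the preceding Lemma on Poisson structures of $\tilde A$, a Poisson algebra structure on $\tilde A$ is the same datum as a skew-symmetric matrix $(f_{ij})_{d\times d}$ over $\tilde A$ with $\sum_{l=1}^d\bigl(f_{lk}\,\partial_l f_{ij}+f_{li}\,\partial_l f_{jk}+f_{lj}\,\partial_l f_{ki}\bigr)=0$ for all $1\le i<j<k\le d$. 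So I would reduce the theorem to producing a bijection between these two sets of structure constants under which the two cubic relations correspond.

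First I would make the algebra $A^*$ explicit. Since $\charac k=0$, the dual algebra of the coalgebra $A=k[x_1,\dots,x_d]$ is the completed divided power algebra on the basis dual to the monomial basis of $A$, and it is identified with $\tilde A=k[[x_1,\dots,x_d]]$ by sending the functional dual to $x^\alpha$ to $x^\alpha/\alpha!$. Under this identification I need two facts: for $\phi,\psi\in A^*$ one has $(\phi\psi)(a)=\sum\phi(a_1)\psi(a_2)$, and the partial derivative $\partial_s$ of $\tilde A$ corresponds on $A^*$ to the transpose of multiplication by $x_s$ on $A$, that is $(\partial_s\phi)(a)=\phi(x_s a)$ for all $\phi\in A^*$ and $a\in A$. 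The second is immediate from the formula for the isomorphism, since the transpose of multiplication by $x_s$ carries the functional dual to $x^\alpha$ to the functional dual to $x^\alpha/x_s$ (and to $0$ if $x_s\nmid x^\alpha$).

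Next, I observe that a linear map $I\colon A\to\mathcal I$ is precisely a skew-symmetric matrix of linear functionals on $A$, i.e.\ of elements of $A^*\cong\tilde A$; concretely, put $f_{ij}\in\tilde A$ with $f_{ij}(a)=\lambda_a^{ij}$. This yields a bijection $I\leftrightarrow(f_{ij})_{d\times d}$ between linear maps $A\to\mathcal I$ and skew-symmetric $d\times d$ matrices over $\tilde A$. The remaining point is that \eqref{co-Jaco-id} transcribes into the Jacobi relation. Using the two facts above, evaluating the $\tilde A$-element $\sum_{l}(f_{lk}\,\partial_l f_{ij}+f_{li}\,\partial_l f_{jk}+f_{lj}\,\partial_l f_{ki})$ on a monomial $a$ gives (with the Sweedler summation implicit)
\[
\sum_{l=1}^d\bigl(\lambda_{a_1}^{lk}\lambda_{x_l a_2}^{ij}+\lambda_{a_1}^{li}\lambda_{x_l a_2}^{jk}+\lambda_{a_1}^{lj}\lambda_{x_l a_2}^{ki}\bigr),
\]
which is exactly the left-hand side of \eqref{co-Jaco-id}. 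Since an element of $\tilde A=A^*$ is zero iff it annihilates every monomial, and since \eqref{co-Jaco-id} is linear in $a$, the Jacobi relation for $(f_{ij})$ holds iff \eqref{co-Jaco-id} holds for all $a\in A$. Composing the three bijections gives the correspondence claimed. As a consistency check I would also note that, unwinding the definitions, the Poisson bracket on $\tilde A=A^*$ attached to $(f_{ij})$ is $q^*$, so this bijection is the one already furnished in the abstract by Proposition~\ref{dual-of-co-Poisson}.

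I expect the main obstacle to be purely bookkeeping: the computations are mechanical once the description of $A^*$ is fixed, and the only genuine input --- the place where $\charac k=0$ is used --- is the identification of $A^*$ with the power series ring $\tilde A$ together with the transcription of multiplication (into $\Delta$) and of $\partial_s$ (into the transpose of $x_s\cdot$). In positive characteristic $A^*$ is the honest divided power algebra rather than a power series ring, so the statement would have to be rephrased with $A^*$ in place of $\tilde A$.
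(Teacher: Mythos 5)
Your proposal is correct and follows the same overall route as the paper: both sides are reduced to structure constants --- co-Poisson structures on $A$ to linear maps $I\colon A\to\mathcal I$ satisfying \eqref{co-Jaco-id} via Theorem \ref{main-result} (with uniqueness of $I$ coming from the reciprocity law of Proposition \ref{main2}), and Poisson structures on $\tilde A$ to skew-symmetric matrices $(f_{ij})$ satisfying the Jacobi relation via the lemma preceding the theorem --- and the bijection you set up (the coefficient functional $a\mapsto\lambda^{ij}_a$ of $I$ corresponds to $f_{ij}$ under $(x^\alpha)^*\mapsto x^\alpha/\alpha!$) is exactly the paper's rescaling $\alpha^{ij}_a=a!\,\lambda^{ij}_a$. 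The only real difference is how the equivalence of the two cubic conditions is verified: the paper expands $\{\{x_i,x_j\},x_k\}$ in $\tilde A$ explicitly and compares coefficients, multiplying through by $c!$ to absorb the factorials, whereas you identify $\tilde A$ with the dual algebra $A^*$ of the coalgebra $A$ and use the two transposition facts (multiplication in $A^*$ is dual to $\Delta$, and $\partial_s$ is dual to multiplication by $x_s$), so that evaluating the Jacobi expression on a monomial yields the left-hand side of \eqref{co-Jaco-id} with essentially no computation. This duality-based verification is correct and somewhat cleaner, and it makes transparent where $\charac k=0$ enters (namely in $A^*\cong\tilde A$), but it is a cosmetic rather than structural departure from the paper's argument.
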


\begin{proof} Suppose
$\{x_i,x_j\}= f_{ij}=\sum_{a \in \mathcal{H}(A)} \lambda^{ij}_a a \, (1 \le i,j \le d)$
gives a Poisson algebra structure on $\tilde{A}$. Let $\alpha^{ij}_a= a! \lambda^{ij}_a$ and
\[I(a)= \sum_{1 \le i,j \le d} \alpha^{ij}_a x_i \otimes x_j, \quad q(a)=I(a_1) \Delta (a_2) \, \textrm{for all}\,  a \in \mathcal{H}(A).\]
Then $q: A \to A \otimes A$ is a co-Poisson coalgebra structure on $A$.
In fact, for all $1 \le i<j<k \le d$,
\[\{\{x_i,x_j\},x_k\}= \sum_{a,b \in \mathcal{H}(A)} \sum_{l=1}^d\lambda^{ij}_a \lambda^{lk}_b \frac{\partial a}{ \partial x_l} b= \sum_{c \in \mathcal{H}(A)} \sum^{ab=c}_{ a ,b \in \mathcal{H}(A)} \sum_{l=1}^d  (a^{(l)}+1) \lambda^{ij}_{ax_l} \lambda^{lk}_b c. \]
Then $\circlearrowleft \{\{x_i,x_j\},x_k\}= 0$
if and only if for all $c \in \mathcal{H}(A)$ and all $1 \le i <j<k \le d$,
\[\sum^{ab=c}_{ a ,b \in \mathcal{H}(A)} \sum_{l=1}^d  (a^{(l)}+1)  \left( \lambda^{ij}_{ax_l} \lambda^{lk}_b + \lambda^{jk}_{ax_l} \lambda^{li}_b+ \lambda^{ki}_{ax_l} \lambda^{lj}_b \right)=0.\]
Note that $\alpha^{ij}_a= a! \lambda^{ij}_a$. It is equivalent to
\[\sum^{ab=c}_{ a ,b \in \mathcal{H}(A)} \sum_{l=1}^d  \frac{1}{a! b!}  \left( \alpha^{ij}_{ax_l}\alpha^{lk}_b + \alpha^{jk}_{ax_l}\alpha^{li}_b+ \alpha^{ki}_{ax_l} \alpha^{lj}_b \right)=0.\]
By multiplying it with $c!$, it is easy to see that it is equivalent to
\[ \sum_{l=1}^d \left(  \alpha^{ij}_{c_1 x_l} \alpha^{lk}_{c_2}+\alpha^{jk}_{c_1 x_l} \alpha^{li}_{c_2}+\alpha^{ki}_{c_1 x_l} \alpha^{lj}_{c_2}  \right)=0\]
for all $c \in \mathcal{H}(A)$ and $1 \le i<j<k \le d$.

On the other hand, suppose $q: A \to A \otimes A$ is a co-Poisson coalgebra structure on $A$.
Let
$I(a)=(-1)^{|a_2|} q(a_1) \Delta(a_2):= \alpha^{ij}_a x_i \otimes x_j$,
and $\lambda^{ij}_a= \frac{1}{a!} \alpha^{ij}_a$. Then
\[\{x_i,x_j\}=f_{ij}:= \sum_{a \in \mathcal{H}(A)} \lambda^{ij}_a a.\]
gives a Poisson algebra structure on $\tilde{A}$.
\end{proof}

\begin{rk} A co-Poisson coalgebra structure $q$ on $A$ given by $I: A \to A \otimes A$ is called {\it rational} if there is an integer $n$
such that $I(a)=0$ for all $a \in \mathcal{H}(A)$ with $\deg a \ge n$. Then there is an one-to-one correspondence between Poisson algebra structures on $A$ and rational co-Poisson coalgebra structures on $A$.
\end{rk}

Combining Propositions \ref{poisson-hopf-str} and \ref{main3}, we have the following.

\begin{thm}
There is an one-to-one correspondence between Poisson Hopf structures on $A$ and co-Poisson Hopf structures on $A$. More precisely, assume
\[\{x_i,x_j\}= \lambda^{ij}_1 x_1 + \cdots +\lambda^{ij}_d x_d\]
is a Poisson Hopf structure on $A$. Let
\[I(x_s)= \sum_{1 \le i,j \le d}\lambda^{ij}_s x_i \otimes x_j\]
for all $1 \le s \le d$ and $I(a)=0$ for all other $a \in \mathcal{H}(A)$. Then $q(a)=I(a_1) \Delta(a_2)$ defines a co-Poisson Hopf structure on $A$.
\end{thm}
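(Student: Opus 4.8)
The plan is to deduce the theorem directly from the two classification results already at hand, namely Proposition \ref{poisson-hopf-str} and Proposition \ref{main3}, by observing that both of them present the relevant structures in terms of the \emph{same} array of scalars subject to the \emph{same} constraints. First I would record the two parametrizations side by side. By Proposition \ref{poisson-hopf-str}, a Poisson Hopf structure on $A=k[x_1,\dots,x_d]$ is the same as a family $\{\lambda^{ij}_l\}_{1\le i,j,l\le d}$ with $\lambda^{ij}_l=-\lambda^{ji}_l$ and
$$\sum_{l=1}^d\left(\lambda^{ij}_l\lambda^{lk}_s+\lambda^{jk}_l\lambda^{li}_s+\lambda^{ki}_l\lambda^{lj}_s\right)=0\qquad(1\le i<j<k\le d,\ 1\le s\le d),$$
the bracket being recovered as $\{x_i,x_j\}=\sum_l\lambda^{ij}_l x_l$. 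By Proposition \ref{main3} (together with Proposition \ref{main2}), a co-Poisson Hopf structure on $A$ is the same as a family $\{\lambda^{ij}_s\}_{1\le i,j,s\le d}$ with each matrix $(\lambda^{ij}_s)_{1\le i,j\le d}$ skew-symmetric and
$$\sum_{l=1}^d\left(\lambda^{lk}_s\lambda^{ij}_l+\lambda^{li}_s\lambda^{jk}_l+\lambda^{lj}_s\lambda^{ki}_l\right)=0\qquad(1\le i<j<k\le d,\ 1\le s\le d),$$
the co-Poisson map being recovered as $q(a)=I(a_1)\Delta(a_2)$, where $I(x_s)=\sum_{i,j}\lambda^{ij}_s x_i\otimes x_j$ and $I(a)=0$ for $\deg a\neq 1$.

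The one genuine observation is that these two systems are term-by-term identical once one keeps track of the dictionary between the subscripts: in the Poisson picture the subscript of $\lambda^{ij}_\bullet$ names the coefficient of the corresponding generator in the bracket $\{x_i,x_j\}$, whereas in the co-Poisson picture the subscript of $\lambda^{\bullet\bullet}_s$ names the generator $x_s$ fed into $I$. Under the identity identification of the two arrays, the Poisson skew-symmetry $\lambda^{ij}_l=-\lambda^{ji}_l$ is precisely the skew-symmetry of the matrices $(\lambda^{ij}_s)_{i,j}$ (take $s=l$), and each cyclic summand $\lambda^{ij}_l\lambda^{lk}_s$ of the Poisson relation equals the summand $\lambda^{lk}_s\lambda^{ij}_l$ of the co-Poisson relation by commutativity of $k$, with the other two terms matching the same way. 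Hence the two constraint sets determine the same subset of $k^{d^{3}}$.

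It then follows formally that the assignment sending the Poisson Hopf structure $\{x_i,x_j\}=\sum_l\lambda^{ij}_l x_l$ to the co-Poisson Hopf structure determined by $I(x_s)=\sum_{i,j}\lambda^{ij}_s x_i\otimes x_j$, $I(a)=0$ for $\deg a\neq 1$, and $q(a)=I(a_1)\Delta(a_2)$, is a well-defined bijection: it is well-defined because a solution of the Poisson constraints is a solution of the co-Poisson constraints (Proposition \ref{main3}); it is injective because a co-Poisson Hopf structure $q$ recovers $I$ via $I(a)=(-1)^{|a_2|}q(a_1)\Delta(a_2)$ (Proposition \ref{main2}) and hence recovers the array; and it is surjective by the same reasoning run in reverse through Proposition \ref{poisson-hopf-str}. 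I do not expect any real obstacle here; the proof is purely a matter of matching the two normal forms, and the only thing that requires care is the index bookkeeping described above.
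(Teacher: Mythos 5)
Your proposal is correct and takes essentially the same route as the paper, whose entire proof is the one-line remark that the theorem follows by combining Propositions \ref{poisson-hopf-str} and \ref{main3}; your term-by-term matching of the two skew-symmetry conditions and the two cyclic quadratic constraints (together with recovering the array from $q$ via Proposition \ref{main2}) is exactly the verification that this combination implicitly requires.
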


\section*{Acknowledgments}
The authors thank Ruipeng Zhu for useful discussions. This research is supported by NSFC key project 11331006 and NSFC project
11171067.

\thebibliography{plain}

\bibitem[CP]{CP}
V. Chari, A. Pressley, A Guide to Quantum Groups, Cambridge
University Press, Providence, 1994.

\bibitem[Dr]{Dr}
V. G. Drinfeld, Quantum groups, Proc. Internat. Congr. Math. (Berkeley, 1986), Amer. Math. Soc., Providence, RI, 1987, 798--820.

\bibitem[FL]{FL}
D. R. Farkas, G. Letzter, Ring theory from symplectic geometry, J.
Pure Appl. Algebra 125 (1998), 155--190.

\bibitem[He]{He}
I. N. Herstein, Rings with Involution, Chicago Lecture in Math.,
Univ. of Chicago Press, Chicago, 1976.


\bibitem[Ke]{Ke} W. F. Keigher, On the ring of Hurwitz series, Comm. Algebra 25 (1997), 1845--1859.

\bibitem[KS]{KS}
L. I. Korogodski, Y. S. Soibelman, Algebras of Functions on Quantum
Groups, Part I, Mathematical surveys and monographs, V. 56, Amer.
Math. Soc., Providence, 1998.

\bibitem[Li]{Li}
A. Lichnerowicz, Les varieties de Poisson et leurs algebras de Lie
associees (French), J. Differential Geometry 12 (1977), 253--300.

\bibitem[LPV]{LPV} C. Laurent-Gengoux, A. Pichereau and P. Vanhaecke, Poisson Structures, Grundlehren
der Mathematischen Wissenschaften 347, Springer, Heidelberg, 2013.


\bibitem[LWW]{LWW} J. Luo, S.-Q. Wang, Q.-S. Wu, Twisted Poincar\'{e} duality between Poisson homology and cohomology, J. Algebra 442 (2015), 484--505.

\bibitem[MM]{MM}
J. W. Milnor, J. C. Moore, On the structure of Hopf algebras, Ann. of Math., 81 (1965), 211--264.

\bibitem[Mo]{Mo}
S. Montgomery, Hopf Algebras and Their Actions on Rings, CBMS Reg.
Conf. Ser. Math. 82, Amer. Math. Soc., Providence, RI, 1993.

\bibitem[MR]{MR}
J. C. McConnell and J. C . Robson, Noncommutative Noetherian Rings,
Wiley, Chichester, 1987.

\bibitem[Oh]{Oh}
S.-Q. Oh, A Poisson Hopf algebra related to a twisted quantum group, Comm. Algebra 45 (2017), 76--104.
\bibitem[OP]{OP}
S.-Q. Oh, H.-M. Park, Duality of co-Poisson Hopf algebras, Bull.
Korean Math. Soc. 48 (2011), 17--21.

\bibitem[Sw]{Sw}
M. E. Sweedler, Hopf Algebras, Benjamin, New York, 1969.


\bibitem[Vo]{Vo}
T. Voronov, On the Poisson envelope of a Lie algebra,
``Noncommutative" moment space, Funct. Anal. Appl. 29 (1995), 196--199.

\bibitem[Wei]{Wei}
A. Weinstein, Lecture on Symplectic Manifolds, CBMS Conference series in Math. 29, 1977.

\end{document}